\newcommand{\R} {\ensuremath{\mathbb{R}}}
\newcommand{\C} {\ensuremath{\mathbb{C}}}
\newcommand{\OO}{\mathcal{O}}
\renewcommand{\o}[1]{\overline{#1}}
\newcommand{\h}[1]{\hat{#1}}
\newcommand{\dq}{\overline{\partial}}
\newcommand{\wt}[1]{\widetilde{#1}}
\DeclareMathOperator{\Reg}{Reg}
\DeclareMathOperator{\Sing}{Sing}
\DeclareMathOperator{\Dom}{Dom}
\newtheorem {satz} {Satz} [section]
\newtheorem {lem} [satz] {Lemma}
\newtheorem {defn} [satz] {Definition}
\newtheorem {conj} [satz] {Conjecture}
\newtheorem {thm} [satz] {Theorem}
\DeclareMathOperator{\supp}{supp}
\renewcommand{\theta}{\vartheta}
\title[$L^2$-theory for $\dq$ on compact spaces] 
{$L^2$-theory for the $\dq$-operator on compact complex spaces}
\author{J. Ruppenthal}
\address{Department of Mathematics, University of Wuppertal, Gau{\ss}str. 20, 42119 Wuppertal, Germany.}
\email{ruppenthal@uni-wuppertal.de}
\date{\today}
\subjclass[2000]{32J25, 32C35, 32W05}
\keywords{Cauchy-Riemann equations, $L^2$-theory, singular complex spaces.}
\begin{document}

\begin{abstract} 
Let $X$ be a singular Hermitian complex space of pure dimension $n$.
We use a resolution of singularities to give a smooth representation of the $L^2$-$\dq$-cohomology of $(n,q)$-forms on $X$.
The central tool is an $L^2$-resolution for the Grauert-Riemenschneider canonical sheaf $\mathcal{K}_X$.
As an application, we obtain a Grauert-Riemenschneider-type vanishing theorem for forms with values in almost positive line bundles.
If $X$ is a Gorenstein space with canonical singularities, then we get also an
$L^2$-representation of the flabby cohomology of the structure sheaf $\OO_X$.
To understand also the $L^2$-$\dq$-cohomology of $(0,q)$-forms on $X$,
we introduce a new kind of canonical sheaf, namely the canonical sheaf of square-integrable holomorphic
$n$-forms with some (Dirichlet) boundary condition at the singular set of $X$. 
If $X$ has only isolated singularities, then we use an $L^2$-resolution for that sheaf and a resolution
of singularities to give a smooth representation of the $L^2$-$\dq$-cohomology of $(0,q)$-forms.
\end{abstract}

\maketitle

~\\[-15mm]
\section{Introduction}
In the 1960s, the $L^2$-theory for the $\dq$-operator
has become an important, indispensable part of complex analysis
through the fundamental work of
H\"ormander on $L^2$-estimates and existence theorems for the $\dq$-operator (see \cite{Hoe1} and \cite{Hoe2})
and the related work of Andreotti and Vesentini (see \cite{AnVe}).
One should also mention Kohn's solution of the $\dq$-Neumann problem (see \cite{Ko1}, \cite{Ko2} and also \cite{KoNi}),
which implies existence and regularity results for the $\dq$-complex, as well (see Chapter III.1 in \cite{FoKo}).
But whereas the theory is very well developed on complex manifolds,
it has been an open problem ever since to create an appropriate $L^2$-theory for the
$\dq$-operator on singular complex spaces.
We will give a partial answer to some aspects of that problem in the present paper.

When we consider the $\dq$-operator on singular complex spaces,
the first problem is to define an appropriate Dolbeault complex
in the presence of singularities. It turns out that it is very fruitful
to investigate the $\dq$-operator in the $L^2$-category (simply) on the 
complex manifold consisting of the regular points of a complex space.
One reason lies
in Goresky and MacPherson's notion of intersection (co-)homology (see \cite{GoMa1, GoMa2})
and the conjecture of Cheeger, Goresky and MacPherson,
which states that the $L^2$-deRham cohomology on the regular part of a projective variety $Y$
(with respect to the restriction of the Fubini-Study metric and the exterior derivate in the
sense of distributions) 
is naturally isomorphic to the intersection cohomology of middle perversity $IH^*(Y)$ of $Y$:

\begin{conj}\label{conj:cgm}{\bf (Cheeger-Goresky-MacPherson \cite{CGM})}\newline
Let $Y\subset \C\mathbb{P}^N$ be a projective variety. Then there is a natural isomorphism
\begin{eqnarray}\label{eq:cgm}
H^k_{(2)}(Y^*) \cong IH^k(Y).
\end{eqnarray}
\end{conj}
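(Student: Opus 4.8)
Both sides of \eqref{eq:cgm} have sheaf-theoretic incarnations, so the natural plan is to realize the $L^2$-de Rham cohomology of $Y-\Sing Y$ as the hypercohomology of a complex of sheaves on $Y$ and then to identify that complex, in the derived category, with the Deligne intersection complex $IC^\bullet_Y$ of middle perversity. Concretely, to an open $U\subseteq Y$ one assigns the space $\mathcal{L}^k(U)$ of measurable $k$-forms $\varphi$ on $U-\Sing Y$ such that $\varphi$ and its distributional exterior derivative $d\varphi$ (taken on the regular part) are both locally square-integrable for the Fubini--Study metric. This yields a complex of \emph{fine} sheaves $\mathcal{L}^\bullet$ on $Y$, and a partition-of-unity argument identifies $\mathbb{H}^k(Y,\mathcal{L}^\bullet)$ with $H^k_{(2)}(Y-\Sing Y)$. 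It is then enough to check that $\mathcal{L}^\bullet$ satisfies the Goresky--MacPherson axioms pinning down $IC^\bullet_Y$ (see \cite{GoMa2}): the lower-bound and normalization axioms, the support (dimension) condition, and the dual cosupport condition.

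The lower-bound and normalization axioms are harmless, since on the dense open manifold $Y-\Sing Y$ the $L^2$-local Poincar\'e lemma gives $\mathcal{H}^k(\mathcal{L}^\bullet)=0$ for $k<0$ and shows that $\mathcal{H}^0(\mathcal{L}^\bullet)$ restricts to the constant sheaf $\C$ there. Everything else is local along $\Sing Y$. For $p$ in a stratum $S$ of complex codimension $c$, one has to compute the stalk $\mathcal{H}^k(\mathcal{L}^\bullet)_p=\varinjlim_{U\ni p}H^k_{(2)}(U-\Sing Y)$ and show it vanishes for $k\geq c$. The key geometric input is that near $p$ the space $Y$ is a bundle of complex cones over its link $L_p$ and the Fubini--Study metric is quasi-isometric there to a product of a smooth metric on $S$ with a transverse conic metric $dr^2+r^2\,g_{L_p}$; feeding this into the $L^2$-cohomology of a metric cone --- Cheeger's cone calculation --- returns exactly the truncation $\tau_{\leq c-1}$ of the cohomology of the link, which is precisely the local structure the intersection-complex axioms demand. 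Poincar\'e--Lefschetz duality for $H^\bullet_{(2)}$, matched with the analogous duality for $IH^\bullet$, then upgrades the support condition to the cosupport condition. A complementary way to organize the same local computations is to first establish an $L^2$ K\"ahler--Hodge decomposition of $H^\bullet_{(2)}(Y-\Sing Y)$ into $(p,q)$-pieces and then read off the stalks from $L^2$-$\dq$-cohomology, where the $\dq$-theory developed in this paper supplies the model computations for the $(0,q)$- and $(n,q)$-parts.

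The main obstacle is exactly this local $L^2$-analysis, and its difficulty grows with the complexity of the singularities. For \emph{isolated} singularities --- the case nearest to the results of the present paper --- the link $L_p$ is a smooth compact manifold, a deleted neighborhood of $p$ really is quasi-isometric to a truncated metric cone, and Cheeger's formula applies almost verbatim; this case should go through cleanly. For \emph{non-isolated} singularities the link is itself singular, so the argument must run by induction on $\dim S$, and each inductive step requires sharp control of how the Fubini--Study metric degenerates along the exceptional locus of a resolution --- the sort of fine metric asymptotics obtained for surfaces by Hsiang--Pati and, for isolated singularities, by Pardon--Stern. A further point to settle en route is the choice of closed $L^2$-extension of $d$: since the metric is incomplete near $\Sing Y$, the minimal and maximal extensions may differ, and one must verify that the distributional $d$ in \eqref{eq:cgm} produces the \emph{middle} perversity, and not one of the two neighboring perversities, in the cone computation. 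Granting the local statements, all the Deligne axioms hold, and taking hypercohomology over the compact space $Y$ yields the asserted natural isomorphism \eqref{eq:cgm}.
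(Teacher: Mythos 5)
This statement is labeled a \emph{conjecture} in the paper, and the paper offers no proof of it: as the introduction explains, the Cheeger--Goresky--MacPherson conjecture has been proved by Ohsawa only in the case of isolated singularities and ``is still open in general.'' So there is no proof to compare yours against, and a complete argument here would be a major new theorem, not a routine verification. Your sketch is the well-known \emph{programme} for attacking the conjecture (sheafify the $L^2$-complex, check fineness and the hypercohomology identification, then verify the Deligne axioms for the middle-perversity intersection complex via local $L^2$-computations along the strata), and you correctly locate where the difficulty sits. But the part you defer --- the local $L^2$-analysis near $\Sing Y$ --- is precisely the open part, so the sketch cannot be accepted as a proof.

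One concrete error beyond the deferral: your claim that near an isolated singular point a deleted neighborhood ``really is quasi-isometric to a truncated metric cone $dr^2+r^2 g_{L_p}$,'' so that Cheeger's cone formula ``applies almost verbatim,'' is false in general. The restriction of the Fubini--Study metric to $Y-\Sing Y$ near a singular point is conical only in special cases (e.g.\ for affine cones); for general isolated singularities the metric degenerates at different rates in different directions, which is exactly why Hsiang--Pati coordinates were needed for surfaces and why Ohsawa's actual proof in the isolated case takes an entirely different route --- Saper's complete K\"ahler metrics degenerating to the incomplete one, combined with Donnelly--Fefferman-type estimates to control the limit --- rather than a direct cone calculation. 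Relatedly, your remark that one ``must verify'' that the distributional $d$ yields the middle perversity is not a formality: as this paper emphasizes for $\dq$, the minimal and maximal closed extensions genuinely differ in the incomplete metric, and which truncation of the link cohomology appears in the stalk is exactly the issue. If you want to engage with what is actually known, the honest statement is: the conjecture holds for isolated singularities (Ohsawa, building on Saper), and the general case remains open.
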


Here and throughout the paper, we write $Y^*$ for the regular part $Y\setminus \Sing Y$ of a singular space.
The early interest in this conjecture
was motivated in large parts by the hope that one could then
use the natural isomorphism and a classical Hodge decomposition $\oplus H^{p,q}$ for $H^k_{(2)}(Y^*)$ to
put a pure Hodge structure on the intersection cohomology of $Y$ (see \cite{PS1}, \cite{PS2} for more on this topic).

It is also interesting to have a look at the arithmetic genus of complex varieties.
When $M$ is a compact complex manifold, the arithmetic genus
$\chi(M) := \sum (-1)^q \dim H^{0,q}(M)$
is a birational invariant of $M$. The conjectured extension of the classical Hodge decomposition
to projective varieties led MacPherson also to ask whether the arithmetic genus $\chi(M)$
extends to a birational invariant of all projective varieties (see \cite{M}).
To formulate MacPherson's question slightly more generally,
we call a reduced and paracompact complex space $X$ Hermitian
if the regular part $X^*=X\setminus \Sing X$
carries a Hermitian metric which is locally the restriction of 
a Hermitian metric in some complex number space where $X$ is represented locally.
E.g. a projective variety is Hermitian with the the restriction of the Fubini-Study metric.

\begin{conj}\label{conj:pher}{\bf (MacPherson)}
If $X$ is a Hermitian compact complex space, then
\begin{eqnarray*}
\chi_{(2)}(X^*) := \sum (-1)^q \dim H^{0,q}_{(2)}(X^*) = \chi(M),
\end{eqnarray*}
where $\pi: M\rightarrow X$ is any resolution of singularities.
\end{conj}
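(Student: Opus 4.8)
The plan is to reduce MacPherson's conjecture to a comparison between the $L^2$-Dolbeault cohomology of the regular part and ordinary Dolbeault cohomology on a resolution. Fix a resolution of singularities $\pi\colon M\to X$ which is biholomorphic over $X-\Sing X$, with exceptional set $E=\pi^{-1}(\Sing X)$ a divisor with only normal crossings, and let $h$ be a Hermitian metric on $X-\Sing X$ as in the definition of a Hermitian complex space; its pullback $\pi^{*}h$ is a metric on $M-E$ that degenerates along $E$ in a controlled way. I would try to realize the $L^2$-$\dq$-complex on $X-\Sing X\cong M-E$ (measured against $\pi^{*}h$) as the complex of global sections of a fine resolution on $M$ of a coherent sheaf $\mathcal{S}$ on $M$, which in the favorable case is $\OO_M$ itself. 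Granting this,
\[
H^{0,q}_{(2)}(X-\Sing X)\;\cong\;H^q(M,\mathcal{S}),
\]
and if $\mathcal{S}=\OO_M$ the alternating sum gives $\chi_{(2)}(X-\Sing X)=\chi(M,\OO_M)=\chi(M)$, the birational invariance of the last term being the fact recalled above --- so the statement is independent of the chosen resolution.

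Concretely, one builds on $M$ the complex of sheaves $\mathcal{F}^{q}$ whose sections over an open set $V$ are the $(0,q)$-forms on $V-E$ that are $L^2_{\mathrm{loc}}$ together with their $\dq$ (taken distributionally) with respect to $\pi^{*}h$. Each $\mathcal{F}^{q}$ is a module over $\mathcal{C}^{\infty}_M$ and hence fine, so that the only real point is \emph{exactness}: a local $L^2$-$\dq$-Poincar\'e lemma across $E$. Away from $E$ this is the classical Dolbeault--Grothendieck lemma and the degree-zero cohomology sheaf there is $\OO_M$; near a point of $E$, where locally $E=\{z_1\cdots z_k=0\}$, one must solve $\dq u=f$ with an $L^2$ solution $u$ for $\dq$-closed $L^2$ data $f$, and determine the degree-zero cohomology sheaf. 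For the solvability I would use $L^2$-estimates for $\dq$ of H\"ormander and Donnelly--Fefferman type, with plurisubharmonic weights and applied first with an auxiliary \emph{complete} K\"ahler metric of Poincar\'e type on $V-E$, and then transfer the estimate back to $\pi^{*}h$. Global closed range of $\dq$ on the compact space $X-\Sing X$, hence finite-dimensionality and a Hodge decomposition, should then follow from these local estimates together with the usual Rellich compactness argument.

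The main obstacle is precisely this analysis near $E$: on the one hand the $L^2$-solvability of $\dq$ against the degenerate metric $\pi^{*}h$ --- the transfer from the complete metric is not a quasi-isometry, so it must exploit the precise first-order (``conical'') vanishing of $\pi^{*}h$ transversally to $E$ and a well-chosen weight --- and, on the other hand and more delicately, the identification of the cohomology sheaves: which closed extension of $\dq$ (minimal $\dq_{s}$ or maximal $\dq_{w}$) one is really working with, and whether the degree-zero cohomology sheaf is $\OO_M$, the pushforward $\OO_X$, or something in between. This is where the arithmetic genus is decided. If $\mathcal{S}=\OO_M$, MacPherson's conjecture follows as stated; if the $L^2$-complex only sees $\OO_X$ --- as happens for the $\dq_w$-extension in the setting of Pardon--Stern --- then $\chi_{(2)}(X-\Sing X)=\chi(X,\OO_X)$, which agrees with $\chi(M)$ exactly when the singularities of $X$ are rational, so that $R^q\pi_{*}\OO_M=0$ for $q\ge 1$. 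I would therefore expect the clean statement either to carry a rationality (or positivity) hypothesis on the singularities, or else to need a separate argument --- comparing $\dq_{s}$, $\dq_{w}$, and the skyscraper quotient complex between them, and showing that its Euler characteristic at each singular point cancels the discrepancy --- in order to obtain the full conjecture for isolated singularities. The non-isolated case looks out of reach of this method, since it would require a stratified $\dq$-Poincar\'e lemma along singular sets of positive dimension; this is consistent with the paper claiming only ``steps towards'' the conjecture.
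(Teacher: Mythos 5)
There is a genuine gap, and you have in fact put your finger on it yourself without finding the way around it. Working directly with the $(0,q)$-$L^2$-complex, the zeroth cohomology sheaf of your $\mathcal{F}^{\bullet}$ is (for the weak extension) $\pi_*\OO_M$, and the Euler characteristic you obtain is $\chi(X,\OO_X)$, which equals $\chi(M,\OO_M)$ only when $R^q\pi_*\OO_M=0$ for $q\geq 1$, i.e.\ for rational singularities. Your proposed fallback --- a skyscraper correction comparing $\dq_s$ and $\dq_w$ at the singular points --- is not how the conjecture is settled, and would in any case not remove the rationality hypothesis from the statement you are resolving.

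The missing idea is to never resolve a sheaf by the $(0,q)$-complex at all. One fixes the \emph{minimal} closed extension $\dq_s$ on $(0,q)$-forms (this is the version of $H^{0,q}_{(2)}$ for which the conjecture is proved, as in Pardon--Stern), and then applies $L^2$-Serre duality, $H^{0,q}_s(X-\Sing X)\cong H^{n,n-q}_w(X-\Sing X)$, to convert the problem into one about $(n,q)$-forms with the weak extension. This shift buys two things at once. First, the $L^2$-condition on $(n,0)$-forms is insensitive to the degeneration of $\gamma=\pi^*h$ along $E$ (the Jacobian factor $|J|^2$ in $dV_\gamma$ exactly cancels; Lemma \ref{lem:dV}), so the zeroth cohomology sheaf of the pushed-forward $(n,*)$-complex is precisely $\pi_*\mathcal{K}_M$ --- no ambiguity between $\OO_M$ and $\OO_X$ arises. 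Second, exactness of the smooth direct-image complex $\pi_*\mathcal{A}^{n,*}_M$ is Takegoshi's relative vanishing theorem $R^q\pi_*\mathcal{K}_M=0$ for $q\geq 1$, which holds for an arbitrary resolution of an arbitrary compact complex space, with no rationality or positivity hypothesis. Combined with the local $L^2$-solvability for $(n,q)$-forms in the degenerate metric (your Donnelly--Fefferman/complete-metric approximation step, which is correct and is exactly Ohsawa's argument used in the paper, but for $(n,q)$ rather than $(0,q)$), one gets two fine resolutions of $\pi_*\mathcal{K}_M$, hence $H^{n,q}_w(X-\Sing X)\cong H^{n,q}(M)\cong H^{0,n-q}(M)$, and dualizing back gives $H^{0,q}_s(X-\Sing X)\cong H^{0,q}(M)$ unconditionally. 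Taking alternating sums yields the conjecture; birational invariance of $\chi(M)$ makes the right-hand side independent of the resolution. Your instinct that the non-compact/non-isolated case is the hard part is also slightly off: this argument works for arbitrary (not necessarily isolated) singularities, because Takegoshi's theorem and the local vanishing are both local over $X$ and make no assumption on $\Sing X$; it is the \emph{weak} $(0,q)$-cohomology (Theorem \ref{thm:B}), not the conjecture above, that forces the restriction to isolated singularities.
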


Due to the incompleteness of the metric on $X^*=X\setminus \Sing X$,
one has to be very careful with the definition of Dolbeault cohomology groups $H^{0,q}_{(2)}$
for they depend on the choice of some kind of boundary condition for the $\dq$-operator.
To explain that more precisely, let $\dq_{cpt}$ be the $\dq$-operator acting on smooth $F$-valued forms 
with compact support away from $\Sing X$, where $F\rightarrow X^*=X\setminus\Sing X$
is a Hermitian holomorphic line bundle:
$$\dq_{cpt}:\  A^{p,q}_{cpt}(X^*,F) \rightarrow A^{p,q+1}_{cpt}(X^*,F).$$
We may consider $\dq_{cpt}$ as an operator acting on square-integrable forms:
$$\dq_{cpt}:\ \Dom \dq_{cpt} = A^{p,q}_{cpt}(X^*,F) \subset L^2_{p,q}(X^*,F) \rightarrow L^2_{p,q+1}(X^*,F).$$
This operator has various closed extensions. The two most important extensions
are the minimal closed extension, namely the closure of the graph of $\dq_{cpt}$ in
$L^2_{p,q}(X^*,F)\times L^2_{p,q+1}(X^*,F)$,
which we will denote by $\dq_{min}$, and the maximal closed extension, that is the $\dq$-operator
in the sense of distributions which we will denote by $\dq_{max}$.
It follows from the results below that they lead to different Dolbeault cohomology groups which we will
call $H^{p,q}_{min}(X^*,F)$ and $H^{p,q}_{max}(X^*,F)$, respectively.
This phenomenon occurs also in other singular situations (see \cite{BS}).

MacPherson's Conjecture \ref{conj:pher} has been settled for projective varieties
by Pardon and Stern \cite{PS1} for the arithmetic genus with respect to $\dq_{min}$, $\chi_{min}(X^*):=\sum (-1)^q \dim H^{0,q}_{min}(X^*)$.
Rather than comparing alternating sums, they realized that the groups $H^{0,q}_{min}$
themselves are birational invariants:

\begin{thm}\label{thm:ps1}{\bf (Pardon-Stern \cite{PS1})}
If $Y$ is a complex projective variety of pure dimension $n$ and $Y^*$ is given the Hermitian metric
induced by the embedding of $Y$ in projective space, then the groups $H^{0,q}_{min}(Y^*)$
are birational invariants of $Y$, and in fact, for $0\leq q\leq n$,
\begin{eqnarray*}
H^{0,q}_{min}(Y^*) \cong H^{0,q}(M),
\end{eqnarray*}
where $\pi: M\rightarrow Y$ is any resolution of singularities.
\end{thm}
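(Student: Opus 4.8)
The plan is to circumvent the direct study of $\dq_s$ on $(0,q)$-forms. Fix a resolution of singularities $\pi\colon M\to Y$, chosen so that the exceptional divisor $E=\pi^{-1}(\Sing Y)$ has normal crossings. The strategy has three steps: (i) pass from $(0,q)$-forms with the minimal extension $\dq_s$ to $(n,n-q)$-forms with the maximal extension $\dq_w$ by $L^2$-Serre duality on $Y-\Sing Y$; (ii) identify the $L^2$-$\dq_w$-complex of $(n,\bullet)$-forms on $Y-\Sing Y$ with a fine resolution of the Grauert--Riemenschneider canonical sheaf $\pi_{*}K_M$ on $Y$; (iii) convert back to $\OO_M$ using Grauert--Riemenschneider vanishing and ordinary Serre duality on $M$.

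For step (i), the operators $\dq_s$ and $\dq_w$ in complementary bidegrees are formal adjoints of each other up to the Hodge star operator of the Hermitian manifold $Y-\Sing Y$; concretely, the pairing $([\phi],[\psi])\mapsto\int_{Y-\Sing Y}\phi\wedge\psi$ descends to $H^{0,q}_s(Y-\Sing Y)\times H^{n,n-q}_w(Y-\Sing Y)$. Since $Y$ is compact the groups in question are finite dimensional (this also follows a posteriori from step (ii)), so this pairing is perfect and gives a natural isomorphism $H^{0,q}_s(Y-\Sing Y)\cong\big(H^{n,n-q}_w(Y-\Sing Y)\big)^{*}$, reducing the theorem to computing the right-hand side.

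For step (ii), consider on $Y$ the complex of sheaves $\mathcal{C}^{\bullet}$ defined by $\mathcal{C}^{q}(V):=\{\alpha\in L^2_{n,q,\mathrm{loc}}(V-\Sing Y):\dq_w\alpha\in L^2_{\mathrm{loc}}\}$. Since the induced metric is a genuine Hermitian metric on $V-\Sing Y$, multiplication by smooth functions on $Y$ preserves these spaces, so the $\mathcal{C}^{q}$ are fine sheaves; hence, $Y$ being compact, the hypercohomology of $\mathcal{C}^{\bullet}$ is the cohomology of its global sections, i.e.\ $H^{n,\bullet}_w(Y-\Sing Y)$. I claim $\mathcal{C}^{\bullet}$ is a resolution of $\pi_{*}K_M$. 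The degree-zero cohomology sheaf is $\pi_{*}K_M$ by the $L^2$-characterization of holomorphic $n$-forms: using the metric-independent identity $|\omega|^{2}_{\gamma}\,dV_{\gamma}=|f|^{2}\,dV_{\mathrm{eucl}}$ for $\omega=f\,dz^{1}\wedge\dots\wedge dz^{n}$ and an arbitrary Hermitian form $\gamma$, and applying it with $\gamma=\pi^{*}g$ on $M\setminus E$ after a change of variables, a $\dq$-closed (hence holomorphic) $(n,0)$-form on $Y-\Sing Y$ is locally square-integrable near $\Sing Y$ precisely when its pull-back under $\pi$ extends holomorphically across $E$, i.e.\ defines a section of $K_M$. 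The vanishing of the higher cohomology sheaves amounts to local $L^2$-solvability of $\dq_w$ for $(n,q)$-forms, $q\ge1$, in a neighbourhood of $\Sing Y$; this I would prove by pulling back to $M$ and deriving local $L^2$-estimates of H\"ormander--Demailly type for the $\dq$-equation on $(n,q)$-forms with respect to the pulled-back (degenerate) metric near the normal-crossings exceptional divisor --- for instance by comparison with a complete K\"ahler metric there. Granting this, $H^{n,q}_w(Y-\Sing Y)=H^{q}(Y,\pi_{*}K_M)$, and since $R^{j}\pi_{*}K_M=0$ for $j\ge1$ (Grauert--Riemenschneider) the Leray spectral sequence degenerates, giving $H^{q}(Y,\pi_{*}K_M)=H^{q}(M,K_M)$.

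Assembling the three steps and invoking Serre duality on the compact complex manifold $M$, one obtains for $0\le q\le n$
\[
H^{0,q}_s(Y-\Sing Y)\ \cong\ \big(H^{n-q}(M,K_M)\big)^{*}\ \cong\ H^{q}(M,\OO_M)\ =\ H^{0,q}(M).
\]
Birational invariance is then clear: the left-hand side depends only on $Y$, and $H^{0,q}(M)$ is independent of the chosen resolution because any two resolutions are dominated by a common one; applying the same reasoning to a common resolution of two birational models shows that these groups are birational invariants of $Y$. The main obstacle is the positive-degree vanishing in step (ii) --- the local $L^2$-solvability of $\dq_w$ for $(n,q)$-forms near the singular set, which requires genuine $L^2$-estimates adapted to the degeneration of $\pi^{*}g$ along $E$. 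Setting up the $L^2$-Serre duality with the attendant finite dimensionality is the other essential ingredient, whereas the identification of the degree-zero cohomology sheaf with $\pi_{*}K_M$ is elementary and metric-independent.
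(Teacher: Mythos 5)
Your proposal is correct and follows essentially the same route as the paper: $L^2$-Serre duality to pass from $(0,q)$-forms with $\dq_s$ to $(n,n-q)$-forms with $\dq_w$, the sheafified $L^2$-$\dq_w$-complex as a fine resolution of $\pi_*\mathcal{K}_M$ (with local exactness via the Donnelly--Fefferman/Demailly/Ohsawa complete-K\"ahler-metric approximation, exactly the step you flag as the main obstacle), and Grauert--Riemenschneider vanishing plus Serre duality on $M$ to conclude.
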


For the $\dq$-operator in the sense of distributions, they claimed:

\begin{thm}\label{thm:ps2}{\bf (Pardon-Stern \cite{PS1})}
For $Y$ as in Theorem \ref{thm:ps1} with isolated singularities only, $\dim Y\leq 2$,
and $0\leq q\leq 2$,
\begin{eqnarray}\label{eq:ps2}
H^{0,q}_{max}(Y^*) \cong H^q(M,\OO(Z-|Z|)),
\end{eqnarray}
where $\pi: M\rightarrow Y$ is a resolution of singularities with only normal crossings,
and $Z$ the unreduced exceptional divisor $Z=\pi^{-1}(\Sing Y)$.
\end{thm}

It seems that the proof of Theorem \ref{thm:ps2} in \cite{PS1} is not complete (see Section \ref{ssec:ps2}).
A full proof was given recently by {\O}vrelid-Vassiliadou \cite{OvVa4}.
We may remark that {\O}vrelid-Vassiliadou make use of some of our results here.
We give another proof of Theorem \ref{thm:ps2} in Section \ref{ssec:ps2} by completing the argument of \cite{PS1}.

In the present paper, we generalize both,
Theorem \ref{thm:ps1} and Theorem \ref{thm:ps2}, to compact Hermitian complex spaces
of arbitrary dimension and forms with values in holomorphic line bundles. Our first main result is:

\begin{thm}\label{thm:A}
Let $X$ be a compact Hermitian complex space of pure dimension, 
$\pi: M\rightarrow X$ any resolution of singularities, and $L\rightarrow M$ a Hermitian line bundle which is locally semi-positive
with respect to the base space. Then the pull-back of forms under $\pi$ induces for all $0\leq q\leq n=\dim X$ a natural isomorphism
\begin{eqnarray}\label{eq:A}
\pi^*: H^{n,q}_{max}(X^*, \pi_* L) \overset{\cong}{\longrightarrow} H^{n,q}(M,L).
\end{eqnarray}
\end{thm}

Here, $L\rightarrow M$ is called semi-positive with respect to the base space if there is for each point $p\in X$
a neighborhood $U_p$ such that $L$ is semi-positive on $\pi^{-1}(U_p)$.
By $\pi_*L$ we denote the Hermitian holomorphic line bundle $(\pi|_{M\setminus E}^{-1})^* L$ over $X^*$,
where $E$ is the exceptional set of the resolution.

Of particular importance is the following situation. Let $F\rightarrow X$ be a Hermitian line bundle over $X$.
Then Theorem \ref{thm:A} applies to $L=\pi^* F$ and $\pi_* L = F|_{X^*}$ because the assumption of semi-positivity is trivially fulfilled.

Nevertheless, it is interesting to state Theorem \ref{thm:A} in this more general version
because in many situations one starts with a manifold $M$ and obtains $X$ as a reduction of $X$,
e.g. as a minimal model. Then we can deduce statements e.g. for a (globally) semi-positive line bundle $L\rightarrow M$.

To make the connection to Theorem \ref{thm:ps1}, we use $L^2$-Serre duality to deduce the dual version of Theorem \ref{thm:A}:

\begin{thm}\label{thm:Ad}
Under the assumptions of Theorem \ref{thm:A}, the push-forward of forms under $\pi$ 
induces for all $0\leq q\leq n$ a natural isomorphism
\begin{eqnarray}\label{eq:D}
\pi_*:  H^{0,q}(M,L^*) \overset{\cong}{\longrightarrow} H^{0,q}_{min}(X^*,\pi_* L^*).
\end{eqnarray}
\end{thm}

Here, $L^*$ is a Hermitian holomorphic line bundle that is locally semi-negative with respect to the base space.
Note that this settles MacPherson's Conjecture also for compact Hermitian complex spaces.

As a first application of Theorem \ref{thm:A}, we can give the following vanishing theorem of
Grauert-Riemenschneider-type:

\begin{thm}\label{thm:vanishing0}
Let $X$ be a pure-dimensional subvariety of a compact K\"ahler manifold, $\dim X=n$, $F\rightarrow X$
an almost positive holomorphic line bundle and $q>0$. Then:
\begin{eqnarray*}
H^q(X,\mathcal{K}_X(F)) = H^{n,q}_{max}(X^*,F)=H^{0,n-q}_{min}(X^*,F^*)=0.
\end{eqnarray*}
\end{thm}

Here, $\mathcal{K}_X(F)$ is the Grauert-Riemenschneider canonical sheaf of holomorphic $n$-forms with values in $F$.

A second application occurs in relation to the minimal model program.
If $X$ is a compact complex space as appearing naturally in the search for minimal models,
then we obtain statements also for the Grothendieck dualizing sheaf.
By use of Serre duality, this yields also an $L^2$-representation of the cohomology of the structure sheaf $\OO_X$,
or more generally any invertible sheaf on $X$:

\begin{thm}\label{thm:Ac}
Let $X$ be a compact Hermitian Gorenstein space of pure dimension $n$ with canonical singularities, $\pi: M\rightarrow X$
a resolution of singularities, $F\rightarrow X$ a holomorphic line bundle and
$\mathcal{F}\rightarrow X$ its associated sheaf of sections. 
Then there exist natural isomorphisms
\begin{eqnarray*}
H^q(X,\mathcal{\omega}_X\otimes \mathcal{F}) \cong H^{n,q}_{max}(X^*,F) \cong H^{n,q}(M,\pi^* F)\ ,\ 0\leq q\leq n,
\end{eqnarray*}
where $\omega_X$ is the Grothendieck dualizing sheaf. By duality we also have
\begin{eqnarray*}
H^{n-q}(X,\mathcal{F}^*) \cong H^{0,n-q}_{min}(X^*,F^*) \cong H^{0,n-q}(M,\pi^*F^*)\ ,\ 0\leq q\leq n.
\end{eqnarray*}
\end{thm}

Note that the last statement means that
$$H^q(X,\OO_X) \cong H^{0,q}_{min}(X^*) \cong H^{0,q}(M)$$
for any resolution of singularities and $0\leq q\leq n$.

\medskip

To approach the cohomology groups $H^{0,q}_{max}(X^*)$ in the spirit of Theorem \ref{thm:ps2},
we must develop a completely new approach because the techniques in \cite{PS1} are especially adopted to $\dim X\leq 2$.
More precisely, Pardon-Stern use Hsiang-Pati coordinates for a resolution of singularities of a normal complex surface (see \cite{HP}).

The key element of our approach here is a new kind of canonical sheaf on $X$ which we denote by $\mathcal{K}_X^s$.
It is the sheaf of germs of holomorphic square-integrable
$n$-forms which satisfy a Dirichlet boundary condition at the singular set $\Sing X$.
It comes as the kernel of the $\dq_s$-operator on square-integrable $(n,0)$-forms (see \eqref{eq:KXs} below).
The $\dq_s$-operator is a localized version of the $\dq_{min}$-operator (see Section \ref{ssec:ds} for the precise definitions). 
We denote by $\mathcal{F}^{p,q}$ the sheaves of germs of $L^2$-forms of degree $(p,q)$ in 
the domain of the $\dq_s$-operator. Then, by solving the $\dq_s$-equation for $(n,q)$-forms at isolated singularities,
we obtain:

\begin{thm}\label{thm:exactness2}
Let $X$ be a Hermitian complex space of pure dimension $n\geq 2$ with only isolated singularities. Then
\begin{eqnarray}\label{eq:exactness2}
0\rightarrow \mathcal{K}_X^s \hookrightarrow \mathcal{F}^{n,0} \overset{\dq_s}{\longrightarrow}
\mathcal{F}^{n,1} \overset{\dq_s}{\longrightarrow} \mathcal{F}^{n,2} \overset{\dq_s}{\longrightarrow} ... \longrightarrow \mathcal{F}^{n,n}
\rightarrow 0
\end{eqnarray}
is a fine resolution.
For an open set $U\subset X$, it follows that
$$H^q(U,\mathcal{K}_X^s) \cong H^q(\Gamma(U,\mathcal{F}^{n,*}))\ ,\ H^q_{cpt}(U,\mathcal{K}_X^s) \cong H^q(\Gamma_{cpt}(U,\mathcal{F}^{n,*})).$$
\end{thm}

If $X$ is compact, then this yields
\begin{eqnarray}\label{eq:intro21}
H^q(X,\mathcal{K}_X^s) \cong H^{n,q}_{min}(X^*) \cong H^{0,n-q}_{max}(X^*),
\end{eqnarray}
where the second isomorphism is by $L^2$-Serre duality. This gives some first understanding
of the $\dq_{max}$-cohomology of $(0,q)$-forms, showing e.g. that the groups are of finite dimension
because it is well known that $H^q(X,\mathcal{K}_X^s)$ is of finite dimension (we will see in a moment that $\mathcal{K}_X^s$ is actually coherent).

However, as above, we can give a smooth representation of \eqref{eq:intro21} in terms of a resolution of singularities.
Let us first study $\mathcal{K}_X^s$ closer.

\begin{thm}\label{thm:Ks}
Let $X$ be a Hermitian complex space of pure dimension with only isolated singularities.
Then there exists a resolution of singularities $\pi: M\rightarrow X$ with only normal crossings 
and an effective divisor $D\geq Z-|Z|$ such that:
\begin{eqnarray}\label{eq:fix6}
\mathcal{K}_X^s \cong \pi_* \big( \mathcal{K}_M \otimes \OO(-D) \big),
\end{eqnarray}
where $\mathcal{K}_X^s$ is the canonical sheaf for the $\dq_s$-operator,
$\mathcal{K}_M$ is the usual canonical sheaf on $M$ and $Z=\pi^{-1}(\Sing X)$
the unreduced exceptional divisor.

If the exceptional set of the resolution $\pi: M\rightarrow X$ has only double self-intersections,
which is particularly the case if $\dim X=2$, 
then one can take $D=Z-|Z|$ in \eqref{eq:fix6}.
\end{thm}

By Grauert's direct image theorem, this yields particularly that $\mathcal{K}_X^s$ is a coherent analytic sheaf.

In contrast to the proof of Theorem \ref{thm:A} we are now in the more complicated situation
that the higher direct image sheaves 
$R^q\pi_* (\mathcal{K}_M\otimes\OO(-D))$, $q>0$,
do not vanish in general. Nevertheless, by a sophisticated use of the Leray spectral sequence,
we obtain our second main result:

\begin{thm}\label{thm:B}
Let $X$ be a Hermitian compact complex space of pure dimension with only isolated singularities. 
Then there exists a resolution of singularities $\pi: M\rightarrow X$ with only normal crossings 
and an effective divisor $D\geq Z-|Z|$ such that
the following holds: Let $0\leq q\leq n=\dim X$.
Then there exists a short exact sequence
\begin{eqnarray}\label{eq:N}
0 \rightarrow H^{n,q}_{min}(X^*) \overset{i}{\longrightarrow} H^q\big(M,\mathcal{K}_M \otimes \OO(-D)\big) 
\longrightarrow \Gamma(X,\mathcal{R}^q) \rightarrow 0.
\end{eqnarray}
Here, $Z$ is the unreduced exceptional divisor $Z=\pi^{-1}(\Sing X)$, $\mathcal{R}^q$ is the direct image sheaf
$R^q\pi_* (\mathcal{K}_M\otimes\OO(-D))$ if $q>0$, and $\mathcal{R}^0\equiv 0$.
If the exceptional set of the resolution has only double self-intersections, then one can choose $D=Z-|Z|$
and the injection $i$ is directly induced by the pullback of forms under $\pi$.
\end{thm}

We make the connection to Theorem \ref{thm:ps2} by use of $L^2$-Serre duality:

\begin{thm}\label{thm:Bd}
Under the assumptions of Theorem \ref{thm:B}, 
there is a surjection
\begin{eqnarray}\label{eq:DD}
p: H^{q}\big(M, \OO(D)\big) \longrightarrow H^{0,q}_{max}(X^*),
\end{eqnarray}
where the kernel is dual to $\Gamma(X,\mathcal{R}^q)$.
If we can choose $D=Z-|Z|$ then the surjection $p$ is induced by the push-forward of forms under $\pi$.
\end{thm}

Similar to Theorem \ref{thm:A}, one can prove Theorem \ref{thm:B} also for forms with values in a line bundle
$L\rightarrow M$ which is locally semi-positive with respect to the base space $X$.
We forgo that here as it would make the proof of Theorem \ref{thm:exactness2} considerably longer.
However, it is easy to see that the proofs of Theorem \ref{thm:B} and Theorem \ref{thm:Bd} apply
without additional difficulties to forms with values in a line bundle $F\rightarrow X$.
If $F$ is almost positive, then we get as in Theorem \ref{thm:vanishing0}:
$$H^q(X,\mathcal{K}_X^s(F)) = H^{n,q}_{min}(X^*,F) = H^{0,n-q}_{max}(X^*,F^*)=0\ ,\ q>0.$$

It is now interesting to ask under which circumstances the direct image sheaves $\mathcal{R}^q$ vanish
so that the maps $\pi^*$ and $\pi_*$ in \eqref{eq:N} and \eqref{eq:DD}, respectively, are isomorphisms.
{\O}vrelid-Vassiliadou showed in \cite{OvVa4} that $R^1\pi_* (\mathcal{K}_M\otimes \OO(|Z|-Z))=0$
if $\dim X=2$. Inserting that in Theorem \ref{thm:Bd} gives a proof of Theorem \ref{thm:ps2}.
However, also without that knowledge, surjectivity of $\pi_*$ in Theorem \ref{thm:Bd} is enough to fix the original proof
of Pardon-Stern. We will explain that in Section \ref{ssec:ps2}.

Besides, vanishing of the $\mathcal{R}^q$, $q>0$, follows from Takegoshi's vanishing theorem (see \cite{Ta}, Remark 2(a)) if the line bundle
associated to $\OO(-D)$ is locally semi-positive with respect to the base space $X$.
This is not true in general, but happens e.g. trivially if $Z=|Z|$ which is the case if the resolution is obtained
by a single blow-up of conical singularities (then we can take $D=Z-|Z|=\emptyset$). Let us summarize that:

\begin{thm}
Under the assumptions of Theorem \ref{thm:B}, assume that either $\dim X=2$ or that 
$X$ has only homogeneous singularities so that the resolution $\pi: M\rightarrow X$ is given by simple blow-ups
of the singularities.
Then the maps $i$ and $p$ in \eqref{eq:N} and \eqref{eq:DD}, respectively, are isomorphisms with $D=Z-|Z|$.
\end{thm}

The present paper is organized as follows. After collecting various preliminaries in Section \ref{sec:ht},
we prove the first main result, Theorem \ref{thm:A}, and its consequences in Section \ref{sec:thmA}.
In Section \ref{sec:canonical}, we develop the connection to Gorenstein spaces with canonical singularities.
Section \ref{sec:dolb} is devoted to the study of some $L^2$-$\dq$-results at isolated singularities
which we need to develop our theory of the $\dq_s$-operator and the canonical sheaf $\mathcal{K}_X^s$
in Section \ref{sec:psi}, which contains the proof of our second main result, Theorem \ref{thm:B}, and its consequences.
In three appendices, we prove some statements about the spectral sequence associated to a double complex,
compute a certain integral that is needed in the proof of Theorem \ref{thm:Ks},
and explain some statements on modifications of canonical sheaves.

\medskip
{\footnotesize
{\bf Acknowledgement.} The author thanks Nils {\O}vrelid for many very helpful discussions.
Particularly for pointing out the difficulty in the proof of Theorem \ref{thm:ps2} in \cite{PS1}
as well as the convergence of the integral in Appendix B.
The author thanks also Martin Sera and Matei Toma for clarifying discussions on the material contained in Appendix C
and moreover the unknown referees for their careful reading.
This research was supported by the Deutsche Forschungsgemeinschaft (DFG, German Research Foundation), 
grant RU 1474/2 within DFG's Emmy Noether Programme.}

\section{Preliminaries}\label{sec:ht}

\subsection{The weak $\dq$-operator $\dq_w$ and its $L^2$-complex}\label{sssec:dqw}

Let $(X,h)$ be a (singular) Hermitian complex space of pure dimension $n$,
$F\rightarrow X^*=X\setminus \Sing X$ a Hermitian holomorphic line bundle, $U\subset X$ an open subset.
On a singular space, it is fruitful
to consider forms that are square-integrable up to the singular set.
So, we will use the following concept of locally square-integrable forms with values in $F$:
\begin{eqnarray*}
L^{2,loc}_{p,q}(U,F):=\{f \in L^{2,loc}_{p,q}(U^*,F): f|_{K^*} \in L^{2}_{p,q}(K^*,F)\ \forall K\subset\subset U\}.
\end{eqnarray*}
It is easy to check that the presheaves given as
$$\mathcal{L}^{p,q}(U,F) := L^{2,loc}_{p,q}(U,F)$$
are already sheaves $\mathcal{L}^{p,q}(F)\rightarrow X$. On $L^{2,loc}_{p,q}(U,F)$, we denote by
$$\dq_w(U): L^{2,loc}_{p,q}(U,F) \rightarrow L^{2,loc}_{p,q+1}(U,F)$$
the $\dq$-operator in the sense of distributions on $U^*=U\setminus\Sing X$ which is closed and densely defined.
When there is no danger of confusion, we will simply write $\dq_w$ for $\dq_w(U)$.
The subscript refers to $\dq_w$ as an operator in a weak sense.
Since $\dq_w$ is a local operator, i.e.
$\dq_w(U)|_V = \dq_w(V)$
for open sets $V\subset U$,
we can define the presheaves of germs of forms in the domain of $\dq_w$,
$$\mathcal{C}^{p,q}(F):=\mathcal{L}^{p,q}(F)\cap \dq_w^{-1}\mathcal{L}^{p,q+1}(F),$$
given by
$\mathcal{C}^{p,q}(U,F) = \mathcal{L}^{p,q}(U,F) \cap\Dom\dq_w(U)$.
These are actually already sheaves
because the following is also clear: If $U=\bigcup U_\mu$ is a union of open sets, $f_\mu=f|_{U_\mu}$ and
$f_\mu \in \Dom \dq_w(U_\mu)$, then
$$f\in \Dom \dq_w(U)\ \ \  \mbox{ and }\ \ \  \big(\dq_w(U) f\big)|_{U_\mu} = \dq_w(U_\mu) f_\mu.$$
Moreover, it is easy to see that the sheaves $\mathcal{C}^{p,q}(F)$ admit partitions of unity,
and so we obtain sequences of fine sheaves
\begin{eqnarray}\label{eq:Cseq1}
\mathcal{C}^{p,0}(F) \overset{\dq_w}{\longrightarrow} \mathcal{C}^{p,1}(F) \overset{\dq_w}{\longrightarrow} \mathcal{C}^{p,2}(F) \overset{\dq_w}{\longrightarrow} ...
\end{eqnarray}
We use simply $\mathcal{C}^{p,q}$ to denote the sheaves of forms with values in the trivial line bundle.
We define
\begin{eqnarray}\label{defn:KX}
\mathcal{K}_X(F) := \ker \dq_w \subset \mathcal{C}^{n,0}(F).
\end{eqnarray}
We will see in the next section, when we deal with resolution of singularities, that $\mathcal{K}_X :=\ker\dq_w\subset \mathcal{C}^{n,0}$
is just the canonical sheaf of Grauert and Riemenschneider because the $L^2$-property of $(n,0)$-forms
remains invariant under modifications.

The $L^{2,loc}$-Dolbeault cohomology for forms with values in $F$ with respect to the $\dq_w$-operator 
on an open set $U\subset X$
is by definition the cohomology of the complex \eqref{eq:Cseq1} which is denoted by $H^q(\Gamma(U,\mathcal{C}^{p,*}(F)))$.
The cohomology with compact support is $H^q(\Gamma_{cpt}(U,\mathcal{C}^{p,*}(F)))$. Note that this is the cohomology
of forms with compact support in $U$, not with compact support in $U^*=U\setminus\Sing X$.

It is clearly interesting to study whether the sequence \eqref{eq:Cseq1} is exact,
which is well-known to be the case in regular points of $X$.
In singular points, the situation is quite complicated for forms of arbitrary degree and not completely understood.
However, we will show that the $\dq_w$-equation is locally solvable in the $L^2$-sense at arbitrary singularities for forms
of degree $(n,q)$, $q>0$, with values in a Hermitian holomorphic line bundle which is locally semi-positive
with respect to $X$.

\subsection{Resolution of singularities}\label{sssec:wresolution}

Throughout the paper, let $\pi: M \rightarrow X$
be a resolution of singularities (which exists due to Hironaka \cite{Hi}), i.e. a proper holomorphic surjection such that
\begin{eqnarray*}
\pi|_{M\setminus E}: M\setminus E \rightarrow X\setminus\Sing X
\end{eqnarray*}
is biholomorphic, where $E=|\pi^{-1}(\Sing X)|$ is the exceptional set.
We may assume that $E$ is a divisor with only normal crossings,
i.e. the irreducible components of $E$ are regular and meet complex transversely, but we do not need that for the moment.
Let $Z:=\pi^{-1}(\Sing X)$ be the unreduced exceptional divisor.
For the topic of desingularization, we refer to
\cite{AHL}, \cite{BiMi} and \cite{Ha}.
Let
$$\gamma:= \pi^* h$$
be the pullback of the Hermitian metric $h$ of $X$ to $M$.
$\gamma$ is positive semidefinite (a pseudo-metric) with degeneracy locus $E$.

We give $M$ the structure of a Hermitian manifold with a freely chosen (positive definite)
metric $\sigma$. Then $\gamma \lesssim \sigma$
and $\gamma \sim \sigma$ on compact subsets of $M\setminus E$.
For an open set $U\subset M$, we denote by $L^{p,q}_{\gamma}(U)$ and $L^{p,q}_{\sigma}(U)$
the spaces of square-integrable $(p,q)$-forms with respect to the (pseudo-)metrics $\gamma$ and $\sigma$,
respectively. 

Since $\sigma$ is positive definite and $\gamma$ is positive semi-definite,
there exists a continuous function $g\in C^0(M,\R)$ such that
\begin{eqnarray}\label{eq:l2dV}
dV_\gamma = g^2 dV_\sigma.
\end{eqnarray}
This yields $|g| |\omega|_\gamma  = |\omega|_\sigma$
if $\omega$ is an $(n,0)$-form, and
$|\omega|_\sigma \lesssim_U |g||\omega|_\gamma$
on $U\subset\subset M$ if $\omega$ is a $(n,q)$-form, $0\leq q\leq n$.\footnote{
This statement means that $|\omega|_\sigma/|\omega|_\gamma$ is locally bounded on $M$ for $(n,q)$-forms.}
So, for an $(n,q)$ form $\omega$ on $U\subset\subset M$:
\begin{eqnarray}\label{eq:l2est2}
\int_U |\omega|_\sigma^2 dV_\sigma \lesssim_U \int_U g^{2} |\omega|_\gamma^2 g^{-2} dV_\gamma = \int_U |\omega|^2_\gamma dV_\gamma.
\end{eqnarray}
Conversely,
$|g| |\eta|_\gamma \lesssim_U |\eta|_\sigma$
on $U\subset\subset M$ if $\eta$ is a $(0,q)$-form, $0\leq q\leq n$.\footnote{
For $(0,q)$-forms, $|\omega|_\gamma/|\omega|_\sigma$ is locally bounded.}
So, for a $(0,q)$ form $\eta$ on $U\subset\subset M$:
\begin{eqnarray}\label{eq:l2est}
\int_U |\eta|_\gamma^2 dV_\gamma \lesssim_U \int_U g^{-2} |\eta|_\sigma^2 g^2 dV_\sigma = \int_U |\eta|^2_\sigma dV_\sigma.
\end{eqnarray}
For open sets $U\subset\subset M$ and all $0\leq q\leq n$, we conclude the relations
\begin{eqnarray}\label{eq:l2est3}
L^{n,q}_{\gamma}(U) &\subset& L^{n,q}_{\sigma}(U),\\
\label{eq:l2est4}
L^{0,q}_{\sigma}(U) &\subset& L^{0,q}_{\gamma}(U).
\end{eqnarray}
If $L\rightarrow M$ is a Hermitian holomorphic line bundle over $M$,
we have:
\begin{eqnarray}\label{eq:l2est3b}
L^{n,q}_{\gamma}(U,L) &\subset& L^{n,q}_{\sigma}(U,L),\\
\label{eq:l2est4b}
L^{0,q}_{\sigma}(U,L) &\subset& L^{0,q}_{\gamma}(U,L).
\end{eqnarray}

For an open set $\Omega \subset X$, $\Omega^*=\Omega \setminus \Sing X$, $\wt{\Omega}:=\pi^{-1}(\Omega)$,
pullback of forms under $\pi$ gives the isometry
\begin{eqnarray}\label{eq:l2est5}
\pi^*: L^2_{p,q}(\Omega^*) \longrightarrow L^{p,q}_{\gamma}(\wt{\Omega}\setminus E) \cong L^{p,q}_{\gamma}(\wt{\Omega}),
\end{eqnarray}
where the last identification is by trivial extension of forms over the thin exceptional set $E$.
If $\pi_* L\rightarrow X\setminus \Sing X$ is the direct image, i.e. $\pi_* L = (\pi|_{M\setminus E}^{-1})^* L$,
then $\pi$ gives analogously the isometry
\begin{eqnarray}\label{eq:l2est5b}
\pi^*: L^2_{p,q}(\Omega^*,\pi_* L) \longrightarrow L^{p,q}_{\gamma}(\wt{\Omega}\setminus E,L) \cong L^{p,q}_{\gamma}(\wt{\Omega},L).
\end{eqnarray}

Combining \eqref{eq:l2est3b} with \eqref{eq:l2est5b},
we see that $\pi^*$ maps
\begin{eqnarray}\label{eq:morph1}
\pi^*: L^2_{n,q}(\Omega^*,\pi_* L) \rightarrow L^{n,q}_\sigma(\pi^{-1}(\Omega),L)
\end{eqnarray}
continuously if $\Omega\subset\subset X$ is a relatively compact open set.
We shall now show how \eqref{eq:morph1} induces the map
\begin{eqnarray}\label{eq:pi2}
\pi^*: H^{n,q}_{max}(X^*,\pi_* L) \rightarrow H^{n,q}(M,L)
\end{eqnarray}
from Theorem \ref{thm:A} (where $X$ is compact).

It makes sense to explain that from a slightly more general point of view.
For that, we need a suitable realization of the $L^2$-cohomology on $M$.
Let $\mathcal{L}^{p,q}_\sigma(L)$ be the sheaves of germs of forms on $M$ which are locally in $L^{p,q}_\sigma(L)$,
and we denote again by $\dq_w$ the $\dq$-operator in the sense of distributions on such forms
because there is no danger of confusion in what follows.
We can simply use the definitions from Section \ref{sssec:dqw} with the choice $X=M$ and $\Sing X=\emptyset$.
Again, we denote the sheaves of germs in the domain of $\dq_w$ by
$$\mathcal{C}^{p,q}_\sigma (L):= \mathcal{L}^{p,q}_\sigma (L)\cap \dq_w^{-1} \mathcal{L}^{p,q+1}_\sigma(L)$$
in the sense that
$\mathcal{C}^{p,q}_\sigma (U,L) = \mathcal{L}^{p,q}_\sigma (U,L) \cap \Dom \dq_w(U)$. It is well-known that
$$\mathcal{K}_M(L) := \ker \dq_w \subset \mathcal{C}^{n,0}_\sigma(L)$$
is the usual canonical sheaf on $M$ if $L$ is the trivial line bundle, and that
\begin{eqnarray}\label{eq:res}
0\rightarrow \mathcal{K}_M(L) \hookrightarrow \mathcal{C}^{n,0}_\sigma(L) \overset{\dq_w}{\longrightarrow} \mathcal{C}^{n,1}_\sigma(L)
\overset{\dq_w}{\longrightarrow} \mathcal{C}^{n,2}_\sigma(L) \longrightarrow ...
\end{eqnarray}
is a fine resolution so that 
$$H^q(U,\mathcal{K}_M(L)) \cong H^q(\Gamma(U,\mathcal{C}^{n,*}_\sigma(L)))\ ,
\ H^q_{cpt}(U,\mathcal{K}_M(L)) \cong H^q(\Gamma_{cpt}(U,\mathcal{C}^{n,*}_\sigma(L)))$$
on open sets $U\subset M$.

Now we can use \eqref{eq:morph1} to see that $\pi^*$ defines
a morphism of complexes 
\begin{eqnarray}\label{eq:morph2}
\pi^*: (\mathcal{C}^{n,*}(\pi_* L),\dq_w) \rightarrow (\pi_* (\mathcal{C}^{n,*}_\sigma(L)),\pi_* \dq_w).
\end{eqnarray}
Let $\Omega\subset X$ be an open set and let $f\in\mathcal{C}^{n,q}(\Omega,\pi_* L)$, $g\in \mathcal{C}^{n,q+1}(\Omega,\pi_* L)$
such that $\dq_w f=g$. By \eqref{eq:morph1}, it follows that $\pi^* f\in \mathcal{L}^{n,q}_\sigma(\pi^{-1}(\Omega),L)$
and $\pi^* g\in \mathcal{L}^{n,q+1}_\sigma(\pi^{-1}(\Omega),L)$ so that $\dq_w \pi^* f=\pi^* g$ on $\pi^{-1}(\Omega)\setminus E$.
But then the $L^2$-extension theorem \cite{Rp1}, Theorem 3.2, tells us that $\dq_w\pi^* f=\pi^*g$ on $\pi^{-1}(\Omega)$.
So $\pi^* f\in \mathcal{C}^{n,q}_\sigma(\pi^{-1}(\Omega),L)$, $\pi^* g\in \mathcal{C}^{n,q+1}_\sigma(\pi^{-1}(\Omega),L)$
and \eqref{eq:morph2} is in fact a morphism of complexes.
Including $\mathcal{K}_X(\pi_* L) =\ker\dq_w \subset \mathcal{C}^{n,0}(\pi_* L)$ and $\mathcal{K}_M(L)=\ker\dq_w \subset \mathcal{C}^{n,0}_\sigma(L)$,
we obtain the commutative diagram
\begin{eqnarray*}
\begin{xy}
  \xymatrix{
      0 \ar[r] & \mathcal{K}_X(\pi_* L) \ar[r] \ar[d]^{\pi^*}    &   \mathcal{C}^{n,0}(\pi_* L) \ar[r]^{\dq_w} \ar[d]^{\pi^*} & 
\mathcal{C}^{n,1}(\pi_* L) \ar[r]^{\dq_w} \ar[d]^{\pi^*} & \mathcal{C}^{n,2}(\pi_* L) \ar[r]
\ar[d]^{\pi^*} & ... \\
      0 \ar[r] & \pi_* (\mathcal{K}_M(L)) \ar[r] &  \pi_* (\mathcal{C}^{n,0}_\sigma(L)) \ar[r]^{\pi_* \dq_w}  & 
\pi_* (\mathcal{C}^{n,1}_\sigma(L)) \ar[r]^{\pi_*\dq_w} & 
\pi_* (\mathcal{C}^{n,2}_\sigma(L)) \ar[r]
& ... }
\end{xy}
\end{eqnarray*}

It follows from commutativity of the diagram that $\pi^*$ induces a morphism on the cohomology of the complexes,
\begin{eqnarray}\label{eq:morph3}
\pi^*: H^q\big(\Gamma(\Omega,\mathcal{C}^{n,*}(\pi_* L))\big) \longrightarrow H^q\big(\Gamma(\pi^{-1}(\Omega),\mathcal{C}^{n,*}_\sigma(L))\big),
\end{eqnarray}
for any open set $\Omega\subset X$ and all $q\geq 0$.
If $X$ is compact and we choose $\Omega=X$, then the left hand side in \eqref{eq:morph3} is $H^{n,q}_{max}(X^*,\pi_* L)$
for $\dq_w(X^*)$ is the $\dq$-operator in the sense of distributions on $X^*$, i.e. $\dq_{max}$,
and the right hand side is just $H^{n,q}(M,L)$. This defines \eqref{eq:A} and \eqref{eq:pi2}, respectively.

\smallskip
We will now use Takegoshi's vanishing theorem \cite{Ta} to show that 
the lower line in the commutative diagram is exact if $L$ is locally semi-positive
with respect to the base space $X$.

Before, we shall mention another implication of the commutative diagram.
The vertical arrow on the left hand side is an isomorphism because $\mathcal{L}^{n,0}(\pi_* L) \cong \pi_* (\mathcal{L}^{n,0}_\sigma(L))$
and the $\dq_w$-equation extends over the exceptional set as described above (the $L^2$-extension \cite{Rp1}, Theorem 3.2).
So,
\begin{eqnarray}\label{KM}
\pi_* \big(\mathcal{K}_M(L)\big) \cong \mathcal{K}_X(\pi_* L).
\end{eqnarray}
Thus, $\mathcal{K}_X$ is in fact the canonical sheaf of Grauert--Riemenschneider as introduced in \cite{GrRie}.
We can use the direct image of the fine resolution \eqref{eq:res} of $\mathcal{K}_M(L)$
to express the cohomology of $\mathcal{K}_X(\pi_* L)$.
This follows by use of Takegoshi's vanishing theorem (see \cite{Ta}, Remark 2) which tells us that
the higher direct image sheaves of $\mathcal{K}_M(L)$ do vanish:
\begin{eqnarray}\label{eq:take1}
R^q\pi_* \big(\mathcal{K}_M(L)\big) =0,\ \ q>0,
\end{eqnarray}
if $L\rightarrow M$ is locally semi-positive with respect to the base space $X$.

Since \eqref{eq:res} is exact, \eqref{eq:take1} implies by use of the Leray spectral sequence that the lower line of the commutative diagram
is a fine resolution of $\pi_* (\mathcal{K}_M(L)) \cong \mathcal{K}_X(\pi_* L)$ (use also \eqref{KM}).
Note that the direct image of a fine sheaf under $\pi_*$ is again a fine sheaf.
So, we have proved:

\newpage
\begin{thm}\label{thm:resolution}
Let $X$ be a Hermitian complex space of pure dimension $n$, $\pi: M\rightarrow X$ a resolution of singularities with exceptional set $E$,
$\sigma$ a Hermitian metric on $M$, and $L\rightarrow M$ a Hermitian holomorphic line bundle which is locally semi-positive
with respect to the base space $X$. Then the $L^2$-complex $(\pi_* \mathcal{C}^{n,*}_\sigma(L),\pi_* \dq_w)$
is a fine resolution of the Grauert-Riemenschneider canonical sheaf with values in $\pi_* L$,
$$\mathcal{K}_X(\pi_* L) \cong \pi_* \mathcal{K}_M(L),$$
(where $\pi_* L= (\pi|_{M-E}^{-1})^* L|_{M-E}$). Thus:
\begin{eqnarray*}
H^q(\Omega,\mathcal{K}_X(\pi_* L)) \cong H^q(\Omega,\pi_*(\mathcal{K}_M(L))) \cong H^q(\pi^{-1}(\Omega),\mathcal{K}_M(L))
\end{eqnarray*}
for any open set $\Omega\subset X$ and all $q\geq 0$.
\end{thm}

Note that the assumption for the line bundle $L\rightarrow M$ is particularly satisfied in the following important situation.
Let $F\rightarrow X$ be a Hermitian holomorphic line bundle on $X$ and set $L:=\pi^* F$. Then $L \rightarrow M$ is clearly
locally semi-positive with respect to the base space $X$ because any holomorphic line bundle $F\rightarrow X$ is trivially
locally semi-positive. So, the statement of Theorem \ref{thm:resolution} holds with $F$ in place of $\pi_* L$ and
$\pi^* F$ in place of $L$, respectively.

\medskip
\subsection{Hermitian holomorphic line bundles}

Let $(M,\sigma)$ be a Hermitian complex manifold and $Z$ a divisor on $M$.
Let $\OO(Z)$ be the sheaf of germs of meromorphic functions $f$ such that $\mbox{div}(f)+Z\geq 0$.
We denote by $L_Z$ the associated holomorphic line bundle such that sections in $\OO(Z)$ correspond
to holomorphic sections in $L_Z$.
The constant function $f\equiv 1$ induces a meromorphic section $s_Z$ of $L_Z$
such that $\mbox{div}(s_Z)=Z$. One can then identify sections in $\OO(Z)$
with sections in $\OO(L_Z)$ by $g\mapsto g\otimes s_Z$,
and we denote the inverse mapping by $g\mapsto g\cdot s_Z^{-1}$.
If $D$ is an effective divisor, then $s_D$ is a holomorphic section of $L_D$ and
$\OO(-D) \subset \OO \subset \OO(D)$.

More generally, if $D$ is an effective divisor, then there is the natural inclusion $\OO(Z)\subset \OO(Z+D)$
which induces the inclusion $\OO(L_{Z})\subset \OO(L_{Z+D})$ given by
$g\mapsto (g\cdot s_{Z}^{-1})\otimes s_{Z+D}$.
For open sets $U\subset M$, this also induces the natural inclusion of smooth sections of vector bundles 
\begin{eqnarray}\label{eq:inclusion1}
\Gamma(U,L_{Z}) \subset \Gamma(U,L_{Z+D}).
\end{eqnarray}

We give each $L_Z$ the structure of a Hermitian holomorphic line bundle 
by choosing an arbitrary positive definite Hermitian metric $\langle\cdot,\cdot\rangle_{L_Z}$,
and we require that the dual bundle $L_Z^* = L_{-Z}$ carries the dual metric
$\langle\cdot,\cdot\rangle_{L_Z^*}=\langle\cdot,\cdot\rangle_{L_{-Z}}$.

If $U\subset\subset M$ is relatively compact and $\sigma$ any metric on $M$,
then \eqref{eq:inclusion1} induces the natural inclusion
\begin{eqnarray}\label{eq:inclusion2}
L^{p,q}_{\sigma}(U,L_Z) \subset L^{p,q}_{\sigma}(U,L_{Z+D})
\end{eqnarray}
for any effective divisor $D$. This does not depend on the metrics chosen on the line bundles
$L_Z$ and $L_{Z+D}$ because $U$ is relatively compact in $M$.
Note that \eqref{eq:inclusion2} is also valid with a positive semi-definite metric $\gamma$ in place of $\sigma$.

\smallskip
Now, let $F\rightarrow M$ be any Hermitian holomorphic line bundle.
As a connection on $F$ we use the Chern connection $D=D'+D''=D'+\dq$.

Our next purpose is to define the Hodge-$*$-operator for differential forms with values in $F$.
It is convenient to work with the conjugate-linear operator
$$\o{*}_\sigma \eta := *_\sigma \o{\eta},$$
where $*_\sigma$ is the usual Hodge-$*$-operator with respect to the metric $\sigma$.

Let
$\tau: F \rightarrow F^*$
be the canonical conjugate-linear bundle isomorphism of $F$ onto its dual bundle.
We can now define the conjugate-linear isomorphism
$$\o{*}_{F,\sigma}: \Lambda^{p,q} T^*M\otimes F \rightarrow \Lambda^{n-p,n-q} T^*M \otimes F^*$$
by setting $\o{*}_{F,\sigma}( \eta\otimes e) := \o{*}_\sigma \eta \otimes \tau(e)$.

This gives the following representation for
the inner product on $(p,q)$-forms with values in $F$:
\begin{eqnarray}\label{eq:ip1}
(\eta,\psi)_{F,\sigma} &=& \int_M \langle\eta,\psi\rangle_{F,\sigma} dV_\sigma = \int_M \eta\wedge \o{*}_{F,\sigma} \psi,\\
\|\eta\|_{F,\sigma} &=& \sqrt{(\eta,\eta)_{F,\sigma}}.\label{eq:ip2}
\end{eqnarray}
For later reference,
we remark that the operator $\o{*}_{F,\sigma}$ is denoted $\#$ in Demailly's introduction to Hodge theory \cite{De2}.
If $\gamma$ is a positive semi-definite metric with degeneracy locus contained in an exceptional set $E\subset M$,
then $\o{*}_\gamma$ and $\o{*}_{F,\gamma}$ are well-defined as above almost everywhere and \eqref{eq:ip1}, \eqref{eq:ip2} 
remain valid with $\gamma$ in place of $\sigma$.

\smallskip

Suppose that $\eta$, $\psi$ are smooth forms with values in $F$ and compact support in $M$,
$\eta$ of degree $(p,q-1)$ and $\psi$ of degree $(p,q)$.
So, $\eta\wedge \o{*}_{F,\sigma} \psi$ is a scalar valued $(n,n-1)$-form
and it is easy to compute:
\begin{eqnarray*}
d\big(\eta\wedge\o{*}_{F,\sigma} \psi\big) &=& \dq \big(\eta\wedge\o{*}_{F,\sigma} \psi\big)
= \dq\eta \wedge \o{*}_{F,\sigma}\psi + (-1)^{p+q-1} \eta\wedge \dq(\o{*}_{F,\sigma}\psi).
\end{eqnarray*}
It follows by Stokes' Theorem that
\begin{eqnarray*}
(\dq\eta,\psi)_{F,\sigma} &=& (-1)^{p+q}\int_M \eta\wedge\dq\o{*}_{F,\sigma} \psi
= -\int_M \eta\wedge \o{*}_{F,\sigma}\o{*}_{F^*,\sigma} \dq \o{*}_{F,\sigma}\psi\\
&=& (\eta, -\o{*}_{F^*,\sigma}\dq \o{*}_{F,\sigma}\psi)_{F,\sigma}.
\end{eqnarray*}
Thus, we note:

\begin{lem}\label{lem:theta}
The formal adjoint of the $\dq$-operator for forms with values in the holomorphic line bundle $F$
with respect to the $\|\cdot\|_{F,\sigma}$-norm is
\begin{eqnarray}\label{eq:theta}
\theta := - \o{*}_{F^*,\sigma} \dq \o{*}_{F,\sigma}.
\end{eqnarray}
\end{lem}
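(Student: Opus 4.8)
The plan is to check directly that the operator $\theta := -\o{*}_{-Z,\gamma}\dq\,\o{*}_{Z,\gamma}$ -- which, by the mapping properties of $\o{*}_{Z,\gamma}$, of $\dq$, and of $\o{*}_{-Z,\gamma}$, sends $A^{p,q}(M,L_Z)$ into $A^{p,q-1}(M,L_Z)$ -- satisfies the integration-by-parts identity $(\dq\eta,\psi)_{Z,\gamma}=(\eta,\theta\psi)_{Z,\gamma}$ for all $\eta\in A^{p,q-1}(M,L_Z)$ and all $\psi\in A^{p,q}(M,L_Z)$ with compact support in $M-E$. On $M-E$ the metric $\gamma$ is positive definite and the line-bundle data are smooth, so every form occurring in the computation is a genuine smooth object and no regularity issue arises; this is precisely the content of the phrase \emph{formal adjoint} here.

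First I would record the two pointwise facts that drive the argument. By the definition $\o{*}_{Z,\gamma}=\o{*}_\gamma\otimes\tau$ through the conjugate-linear bundle isomorphism $\tau\colon L_Z\to L_Z^*$, the product $\eta\wedge\o{*}_{Z,\gamma}\psi$ is \emph{scalar-valued} (the $L_Z\otimes L_Z^*$-pairing absorbs the bundle factor) and equals $\langle\eta,\psi\rangle_{Z,\gamma}\,dV_\gamma$, so that $\int_M\eta\wedge\o{*}_{Z,\gamma}\chi=(\eta,\chi)_{Z,\gamma}$ for any $L_Z$-valued form $\chi$ of complementary degree. Second, from the classical identity $\o{*}_\gamma\o{*}_\gamma=(-1)^{k}$ on $k$-forms (on a manifold of real dimension $2n$) together with the fact that the ``musical'' maps compose to the identity on the double dual, $\tau_{-Z}\circ\tau_Z=\mathrm{id}_{L_Z}$, one obtains $\o{*}_{Z,\gamma}\circ\o{*}_{-Z,\gamma}=(-1)^{k}$ on $L_{-Z}$-valued $k$-forms; this is exactly what is needed to reinsert the star operators at the right place.

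The core computation is then short. Since $\eta\wedge\o{*}_{Z,\gamma}\psi$ has bidegree $(n,n-1)$, its $\partial$-part vanishes identically, so $d(\eta\wedge\o{*}_{Z,\gamma}\psi)=\dq(\eta\wedge\o{*}_{Z,\gamma}\psi)=\dq\eta\wedge\o{*}_{Z,\gamma}\psi+(-1)^{p+q-1}\eta\wedge\dq(\o{*}_{Z,\gamma}\psi)$ by the Leibniz rule -- here on the scalar pairing the $(0,1)$-part of the Chern connection reduces to the ordinary $\dq$, and the mixed bidegree terms in $d\eta$ and $d(\o{*}_{Z,\gamma}\psi)$ drop out because the product has top bidegree in the holomorphic variables. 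Integrating over $M$ and invoking Stokes' theorem (the boundary term vanishes since $\eta,\psi$ are compactly supported away from $E$) gives $(\dq\eta,\psi)_{Z,\gamma}=(-1)^{p+q}\int_M\eta\wedge\dq\,\o{*}_{Z,\gamma}\psi$. Finally I would insert $\mathrm{id}=(-1)^{k}\,\o{*}_{Z,\gamma}\o{*}_{-Z,\gamma}$ in front of $\dq\,\o{*}_{Z,\gamma}\psi$ (which has degree $k=2n-p-q+1$), rewrite the integrand as $\eta\wedge\o{*}_{Z,\gamma}\bigl(-\o{*}_{-Z,\gamma}\dq\,\o{*}_{Z,\gamma}\psi\bigr)$ once the sign $(-1)^{p+q}(-1)^{2n-p-q+1}=-1$ is collected, and read this off as $(\eta,\theta\psi)_{Z,\gamma}$ by the first pointwise fact.

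The one genuinely delicate point is this last sign bookkeeping: one must correctly track the degree shift $(p,q-1)\wedge(n-p,n-q)=(n,n-1)$ and then the degree $2n-p-q+1$ of $\dq\,\o{*}_{Z,\gamma}\psi$ through $\o{*}_\gamma\o{*}_\gamma=(-1)^{\deg}$, and verify that the prefactor produced by Stokes combines with the one from reinserting the stars to leave exactly the single minus sign in \eqref{eq:theta}. Everything else -- the Leibniz rule, the vanishing of the boundary term, the scalar-valuedness of $\eta\wedge\o{*}_{Z,\gamma}\psi$ -- is routine. (One could instead bypass the sign chase entirely by reducing to the untwisted scalar case $Z=0$, where $\theta=-\o{*}_\gamma\dq\,\o{*}_\gamma$ is classical, and transporting the formula along local trivializations of the Hermitian line bundle $L_Z$; but the direct computation is no longer.)
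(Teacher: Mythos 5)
Your proposal is correct and follows essentially the same route as the paper: the paper also tests $\theta$ against $\eta\in A^{p,q-1}(M,L_Z)$ and $\psi\in A^{p,q}(M,L_Z)$ compactly supported in $M-E$, observes that $\eta\wedge\o{*}_{Z,\gamma}\psi$ is a scalar-valued $(n,n-1)$-form so that $d=\dq$ on it, applies the Leibniz rule and Stokes, and then reinserts $\o{*}_{Z,\gamma}\o{*}_{-Z,\gamma}=(-1)^{\deg}$ to collect the single minus sign. Your sign bookkeeping $(-1)^{p+q}(-1)^{2n-p-q+1}=-1$ matches the computation the paper carries out just before stating the lemma.
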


\bigskip

\subsection{$L^2$-Serre duality}\label{ssec:l2serre}

Let $(N,\sigma)$ be a Hermitian complex manifold of dimension $n$, $F\rightarrow M$ a Hermitian holomorphic line bundle,
and
$$\dq_{cpt}: A^{p,q}_{cpt}(N,F) \rightarrow A^{p,q+1}_{cpt}(N,F)$$
the $\dq$-operator on smooth $F$-valued forms with compact support in $N$.
Then we denote by
$$\dq_{max}: L^2_{p,q}(N,F) \rightarrow L^2_{p,q+1}(N,F)$$
the maximal and by
$$\dq_{min}: L^2_{p,q}(N,F) \rightarrow L^2_{p,q+1}(N,F)$$
the minimal closed Hilbert space extension of the operator $\dq_{cpt}$
as densely defined operator from $L^2_{p,q}(N)$ to $L^2_{p,q+1}(N)$.

For $F$-valued forms, 
let $H^{p,q}_{max}(N,F)$ be the $L^2$-Dolbeault cohomology on $N$ with respect
to the maximal closed extension $\dq_{max}$, i.e. the $\dq$-operator in the sense of distributions on $N$,
and $H^{p,q}_{min}(N,F)$ the $L^2$-Dolbeault cohomology with respect to the minimal closed
extension $\dq_{min}$.

We will now identify the Hilbert space adjoints $\dq_{max}^*$ and $\dq_{min}^*$ of $\dq_{max}$ and $\dq_{min}$.
Let $\theta$ be the formal adjoint of $\dq$ as computed in Lemma \ref{lem:theta},
and denote by $\theta_{cpt}$ its action on smooth $F$-valued forms compactly supported in $N$:
$$\theta_{cpt}: A^{p,q}_{cpt}(N,F) \rightarrow A^{p,q-1}_{cpt}(N,F).$$
This operator is graph closable as an operator $L^2_{p,q}(N,F)\rightarrow L^2_{p,q-1}(N,F)$,
and as for the $\dq$-operator, we denote by $\theta_{min}$ its minimal closed extension, i.e.
the closure of the graph, and by $\theta_{max}$ the maximal closed extension, that is the
$\theta$-operator in the sense of distributions with respect to compact subsets of $N$.

By \eqref{eq:theta}, it follows that
\begin{eqnarray*}
\theta_{min} = - \o{*}_{F^*,\sigma} \dq_{min} \o{*}_{F,\sigma}\ \ ,\ \ \theta_{max} = - \o{*}_{F^*,\sigma} \dq_{max} \o{*}_{F,\sigma}.
\end{eqnarray*}
By definition, $\dq_{max} = \theta_{cpt}^*$,
and it follows that
\begin{eqnarray}\label{eq:adjoint1}
\dq_{max}^* = \big(\theta_{cpt}^{*}\big)^* = \o{\theta_{cpt}} = \theta_{min} = -\o{*}_{F^*,\sigma}\dq_{min}\o{*}_{F,\sigma},
\end{eqnarray}
if we denote by $\o{\theta_{cpt}}$ also the closure of the graph of $\theta_{cpt}$.
Analogously, $\theta_{max}=\dq_{cpt}^*$ implies 
\begin{eqnarray}\label{eq:adjoint2}
\dq_{min}^* = \theta_{max} = - \o{*}_{F^*,\sigma} \dq_{max} \o{*}_{F,\sigma}.
\end{eqnarray}
As usual, this realization of the Hilbert space adjoints
yields harmonic representation of cohomology classes
and fundamental duality relations:

\begin{thm}\label{thm:duality}
Let $N$ be a Hermitian complex manifold of dimension $n$, $F\rightarrow N$ a Hermitian holomorphic line bundle,
and let $0\leq p,q\leq n$.
Assume that the $\dq$-operators in the sense of distributions
\begin{eqnarray}
\dq_{max}:&& L^2_{p,q-1}(N,F) \rightarrow L^2_{p,q}(N,F),\label{eq:dqmax1}\\
\dq_{max}:&& L^2_{p,q}(N,F) \rightarrow L^2_{p,q+1}(N,F)\label{eq:dqmax2}
\end{eqnarray}
both have closed range (with the usual assumptions for $q=0$ or $q=n$). 
Then there exists a non-degenerate pairing
$$\{\cdot,\cdot\}: H^{p,q}_{max}(N,F) \times H^{n-p,n-q}_{min}(N,F^*) \rightarrow \C$$
given by
$$\{\eta,\psi\}:=\int_N \eta\wedge\psi.$$
\end{thm}

\begin{proof}
The proof follows by standard arguments (representation of cohomology groups by harmonic representatives,
see Proposition 1.3 in \cite{PS1} and the Appendix in \cite{KK})
from the fact that the operators \eqref{eq:dqmax1}, \eqref{eq:dqmax2} have closed range exactly if their $L^2$-adjoints
have closed range. By \eqref{eq:adjoint1}, \eqref{eq:adjoint2} this is the case exactly if both the operators
\begin{eqnarray*}
\dq_{min}:&& L^2_{n-p,n-q}(N,F^*) \rightarrow L^2_{n-p,n-q+1}(N,F^*),\\
\dq_{min}:&& L^2_{n-p,n-q-1}(N,F^*) \rightarrow L^2_{n-p,n-q}(N,F^*)
\end{eqnarray*}
and their $L^2$-adjoints have closed range. So, $\o{*}_{F,\sigma}$ induces an isomorphism
\begin{eqnarray*}
H^{p,q}_{max}(N,F)\cong \ker \dq_{max} \cap \ker \dq_{max}^* \overset{\o{*}_{F,\sigma}}{\longrightarrow} 
\ker\dq_{min}\cap \ker\dq_{min}^* \cong H^{n-p,n-q}_{min}(N,F^*).
\end{eqnarray*}
\end{proof}

\bigskip

\section{$L^2$-Resolution of the Grauert-Riemenschneider canonical sheaf}\label{sec:thmA}

In this section, we prove Theorem \ref{thm:A} and derive dual statements by use of $L^2$-Serre duality.
The key ingredient here is a fine $L^2$-resolution for the Grauert-Riemenschneider canonical sheaf $\mathcal{K}_X$
with values in holomorphic line bundles.

\subsection{$L^2$-Solution of the $\dq_w$-equation on singular spaces}
We show that the $L^2$-$\dq_w$-complex $(\mathcal{C}^{n,*}(F))$ introduced in Section \ref{sssec:dqw}, \eqref{eq:Cseq1},
is a fine resolution for the Grauert-Riemenschneider canonical sheaf with values in certain line bundles:

\begin{thm}\label{thm:exactness1}
Let $X$ be a Hermitian complex space of pure dimension $n$,
and $F\rightarrow X^*=X\setminus \Sing X$ a Hermitian holomorphic line bundle which is locally semi-positive with respect to $X$,
i.e. for each point $x\in X$ there is a neighborhood $U_x\subset X$ such that $F$ is semi-positive on $U_x^*=U_x\setminus \Sing X$.
Then the complex
\begin{eqnarray}\label{eq:complex1}
0\rightarrow \mathcal{K}_X(F) \longrightarrow \mathcal{C}^{n,0}(F) \overset{\dq_w}{\longrightarrow}
\mathcal{C}^{n,1}(F) \overset{\dq_w}{\longrightarrow} \mathcal{C}^{n,2}(F) \overset{\dq_w}{\longrightarrow} ...
\end{eqnarray}
is exact, i.e. it is a fine resolution of $\mathcal{K}_X(F)$.
\end{thm}

Note that the assumption on $F$ is trivially fulfilled if $F$ extends to a holomorphic line bundle over $X$.
For the case of the trivial line bundle, $F=X\times\C$, Theorem \ref{thm:exactness1} is due to Pardon-Stern \cite{PS1}.

We will now prove Theorem \ref{thm:exactness1}. Our main tool is a vanishing theorem for complete K\"ahler manifolds
which we obtain by generalizing similar vanishing theorems 
of Donelly-Fefferman \cite{DF} and Ohsawa \cite{Oh2}:

\begin{thm}\label{thm:df2}
Let $N$ be a complete K\"ahler manifold of dimension $n$, whose K\"ahler metric $\omega$ is given by a potential
function $G: N\rightarrow \R$ as $\omega=i\partial\dq G$ such that $\langle\partial G,\partial G\rangle_\omega$ is bounded,
and let $(F,H)$ be a semi-positive Hermitian line bundle on $N$.
Then the $L^2$-$\dq$-cohomology in the sense of distributions with respect to $\omega$ for forms with values in $F$, 
$H^{n,q}_{max,\omega}(N,F)=0$ for $q>0$.
In fact, if $\langle\partial G,\partial G\rangle_\omega \leq B^2$, and $\phi$ is a $\dq$-closed $(n,q)$-form on $N$, $q>0$,
then there is a $(n,q-1)$-form $\nu$ such that $\dq\nu=\phi$ and $\|\nu\|_{\omega,H}\leq 4 B\|\phi\|_{\omega,H}$.
\end{thm}

\begin{proof}
Let $D=D'+D''=D'+\dq$ be the Chern connection on $F$.
It follows by \cite{Hoe2}, Lemma 4.1.1, that it is enough to show that
\begin{eqnarray}\label{eq:df1}
\|u\|_{\omega,H} \leq 4 |\partial G|_{\infty,\omega} \|(D'')^* u\|_{\omega,H}
\end{eqnarray}
for any $u\in \ker D''\cap \Dom (D'')^*\cap L^{n,q}_{\omega}(N,F)$ if $q\geq 1$.
The estimate \eqref{eq:df1} can be proved as follows:

For any differential form $\eta$, let $e(\eta)$ denote left multiplication by $\eta$,
$[\ ,\ ]$ the commutator with appropriate weight (i.e. $[S,T]=S\circ T- (-1)^{\deg S\deg T} T\circ S$),
and $^*$ the $L^2$-adjoint of an operator. With $\Lambda:=e(i\partial\dq G)^*$, it is well-known that
$[D'',\Lambda]=i(D')^*$ and $[e(\partial G),\Lambda]=i e(\dq G)^*$ (see \cite{De2}, 13.1, and \cite{W}, V.(3.22)).
Note that $\Lambda$, $e(\partial G)^*$ and $e(\dq G)^*$ are independent of the Hermitian vector bundle $(F,H)$.
Therefore:
\begin{eqnarray*}
[D'',e(\dq G)^*] &=& D''\circ e(\dq G)^* + e(\dq G)^*\circ D''\\
&=& -i D''\circ [e(\partial G),\Lambda] - i [e(\partial G),\Lambda]\circ D''\\
&=& [e(i\partial\dq G),\Lambda] + i e(\partial G) \circ[D'',\Lambda] + i [D'',\Lambda]\circ e(\partial G)\\
&=& [e(i\partial\dq G),\Lambda] - [e(\partial G),(D')^*].
\end{eqnarray*}
Reorganizing this, we have
$$[e(i\partial\dq G), \Lambda] =[D'',e(\dq G)^*] + [e(\partial G),(D')^*].$$
For a compactly supported smooth form $u$, that yields:
\begin{eqnarray*}
\big|([e(i\partial\dq G,\Lambda] u,u)_{\omega,H} \big|&=&
\big|(u, e(\dq G)\circ (D'')^* u)_{\omega,H} + (D''u, e(\dq G) u)_{\omega,H}\\
&+& (e(\partial G)\circ (D')^* u,u)_{\omega,H} + (e(\partial G) u, D'u)_{\omega,H} \big|\\
&\leq& |\partial G|_{\infty,\omega} \|u\|_{\omega,H}\\
&& \cdot\big(\|(D'')^*u\|_{\omega,H} + \|D''u\|_{\omega,H} + \|(D')^* u\|_{\omega,H} + \|D'u\|_{\omega,H}\big)\\
&\leq& 4 |\partial G|_{\infty,\omega} \|u\|_{\omega,H} \big(\|(D'')^*u\|_{\omega,H} + \|D''u\|_{\omega,H}\big),
\end{eqnarray*}
where the last step follows from the Bochner-Kodaira-Nakano identity
\begin{eqnarray*}
\|(D')^*u\|_{\omega,H}^2+\|D'u\|_{\omega,H}^2 &=& \|(D'')^*u\|_{\omega,H}^2 + \|D''u\|_{\omega,H}^2\\
&& - \int_N \langle [i\Theta(F),\Lambda]u,u\rangle_{\omega,H} dV_\omega
\end{eqnarray*}
(see \cite{De2}, Theorem 13.12) and the fact that $\langle [i\Theta(F),\Lambda]u,u\rangle_{\omega,H} \geq 0$
because $F$ is semi-positive (see \cite{De2}, 13.6).
In this step one sees that the theorem is also valid for $(0,q)$-forms with values in a semi-negative line bundle.
On the other hand,
$$([e(i\partial\dq G,\Lambda] u,u)_{\omega,H}=(p+q-n)\|u\|^2_{\omega,H}$$
if $u$ is a $(p,q)$-form (see \cite{W}, Proposition V.1.1(c)).
So, the estimate \eqref{eq:df1} is valid for smooth $(n,q)$-forms with compact support
if $q\geq 1$. As $(N,\omega)$ is complete, that implies \eqref{eq:df1} and the theorem is proved
(cf. \cite{De2}, Proposition 12.2).
\end{proof}

Let us now see how we can apply Theorem \ref{thm:df2} to prove Theorem \ref{thm:exactness1},
i.e. to solve the $\dq_w$-equation on $X$ locally for $(n,q)$-forms.

For the proof of Theorem \ref{thm:exactness1}, it is enough to consider the $\dq_w$-equation at singular points of $X$.
So, assume that a neighborhood of a point $p\in\Sing X$ is embedded holomorphically into $\C^L$, $L\gg n$,
such that $p=0\in\C^L$, and let $B_c$ be a ball of very small radius $c>0$ centered at the origin
such that $B_c\cap \Sing X$ is given as the common zero set of holomorphic functions $\{f_1, ..., f_m\}$ in $B_c$.
Following Pardon-Stern \cite{PS1}, we set
\begin{eqnarray}\label{eq:G}
G=-\log(c^2-|z|^2)
\end{eqnarray}
and
\begin{eqnarray}\label{eq:Gk}
G_k= -\log(c^2-|z|^2)-\frac{1}{k}\log\big(-\log\sum|f_j|^2\big)
\end{eqnarray}
for $z\in B_c$ and $k>1$, where $c$ is so small that $\sum|f_j|^2\ll 1$ on $B_c$.
Let $U:=X\cap B_c$. 
Then some computations yield (this is \cite{PS1}, Lemma 2.4):

\begin{lem}\label{lem:complete}
The metric $\omega_k:=i\partial\dq G_k$ on $U^*=U\setminus\Sing X$ is complete and decreases monotonically to $\omega:=i\partial\dq G$,
pointwise on $U^*$. $\langle \partial G_k,\partial G_k\rangle_{\omega_k}$ is bounded, independently of $k$,
where $\langle\cdot,\cdot\rangle_{\omega_k}$ denotes the pointwise metric on $1$-forms with respect to $\omega_k$.
\end{lem}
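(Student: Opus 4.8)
The plan is to write $F_k = F + \tfrac1k\,\rho$, where $F = -\log(c^2-|z|^2)$ is the classical potential of a complete Kähler metric on the ball $B_c$ and
\[
\rho := -\log\Big(-\log\sum_j |f_j|^2\Big),
\]
and to base everything on the single observation that $\rho$ is plurisubharmonic on $U - U\cap\Sing X$. Indeed $\log\sum_j |f_j|^2$ is plurisubharmonic on $B_c$, being the logarithm of a sum of squared moduli of holomorphic functions, so $t := -\log\sum_j|f_j|^2$ — which is $>0$ on $B_c$ because $\sum|f_j|^2 \ll 1$, and $\to +\infty$ along $\Sing X$ — satisfies $i\partial\dq t \leq 0$. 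Writing $\rho = -\log t$, so that $\partial\rho = -t^{-1}\partial t$, the key step is to record the pointwise inequality of Hermitian forms
\[
i\partial\dq\rho \;=\; -\frac{i\partial\dq t}{t} + i\partial\rho\wedge\dq\rho \;\geq\; i\partial\rho\wedge\dq\rho \;\geq\; 0 \qquad\text{on } U - U\cap\Sing X,
\]
where the first term is nonnegative because $i\partial\dq t\leq 0$ and $t>0$.

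Granted this, the three assertions of the lemma become bookkeeping. Since $F$ is strictly plurisubharmonic on $B_c$, $\omega = i\partial\dq F>0$, and the inequality above gives $\omega_k = \omega + \tfrac1k\,i\partial\dq\rho \geq \omega > 0$, so each $\omega_k$ is a genuine Kähler metric on $U-U\cap\Sing X$ (closedness is automatic for $i\partial\dq$ of a function). Monotone decrease $\omega_k\searrow\omega$ as $k\to\infty$ is immediate, since $i\partial\dq\rho\geq 0$ is a fixed finite form at each smooth point while $\tfrac1k\to 0$.

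For completeness I would argue that a path in $U-U\cap\Sing X$ leaving every compact subset must, along a subsequence of parameters, either have $|z|\to c$ (escape to $X\cap\partial B_c$) or $\sum|f_j|^2\to 0$ (accumulate on $\Sing X$), and that in either case its $\omega_k$-length is infinite. The first case uses $\omega_k\geq\omega=i\partial\dq F$ together with the classical fact that $F=-\log(c^2-|z|^2)$ has bounded $\omega$-gradient but $F\to+\infty$ at $\partial B_c$. The second case uses $\omega_k\geq\tfrac1k\,i\partial\rho\wedge\dq\rho$, which forces $|d\rho(v)|\lesssim\sqrt{k}\,|v|_{\omega_k}$ for every tangent vector $v$ (recall $\rho$ is real), so the $\omega_k$-length of the path dominates $k^{-1/2}$ times the total variation of $\rho$ along it; since $\rho\to-\infty$ on approach to $\Sing X$, this is infinite. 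The same inequality $\omega_k\geq\tfrac1k\,i\partial\rho\wedge\dq\rho$ says precisely that $|\partial\rho|^2_{\omega_k}\leq k$, whence $\tfrac1k|\partial\rho|_{\omega_k}\leq k^{-1/2}\leq 1$; combined with $|\partial F|_{\omega_k}\leq|\partial F|_{\omega}$ (because $\omega_k\geq\omega$ makes the co-norm on $1$-forms smaller), which is a bounded function by the classical computation for the ball potential, we get that $\langle\partial F_k,\partial F_k\rangle_{\omega_k}\leq\big(|\partial F|_{\omega_k}+\tfrac1k|\partial\rho|_{\omega_k}\big)^2$ is bounded independently of $k$.

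The only genuine obstacle is the completeness estimate near $\Sing X$: everything hinges on the pointwise bound $i\partial\dq\rho\geq i\partial\rho\wedge\dq\rho$ and on converting it, via a length computation for real curves, into the statement that approaching the pole locus of $\rho$ costs infinite $\omega_k$-distance. The remaining items — positivity of $i\partial\dq F$, boundedness of $|\partial F|_\omega$, and the explicit local estimates — are routine, and the detailed computations are carried out in \cite{PS1}, Lemma~2.4, to which I would ultimately defer for the numerics.
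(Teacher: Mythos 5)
Your argument is correct, and it is essentially the standard one: the paper gives no proof of this lemma but defers entirely to [PS1, Lemma 2.4], whose engine is exactly your inequality $i\partial\dq\rho\ge i\partial\rho\wedge\dq\rho\ge 0$ for $\rho=-\log t$, $t=-\log\sum|f_j|^2>0$ with $i\partial\dq t\le 0$. Your deductions from it --- positivity and monotone decrease of $\omega_k$, completeness via the bounded-gradient/unbounded-potential dichotomy at $bB_c(0)$ and at $\Sing X$, and the uniform bound $\langle\partial F_k,\partial F_k\rangle_{\omega_k}\le\big(1+k^{-1/2}\big)^2$ --- are all sound.
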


So, $U^*=U\setminus\Sing X$ carries a sequence of complete metrics $\omega_k$, decreasing to the incomplete metric $\omega$,
and we know by Theorem \ref{thm:df2} that we can solve the $\dq$-equation in the $L^2$-sense with respect to $\omega_k$
with a bound that does not depend on $k$. It is now essential to realize that $(n,q)$-forms do not only behave
well under a resolution of singularities, but also under such a decrease of the metric:

\begin{lem}\label{lem:vanishing}
Let $N$ be a complex manifold of dimension $n$ with a decreasing sequence of Hermitian metrics $\omega_k$, $k\geq 1$,
which converges pointwise to a Hermitian metric $\omega$. If $H^{n,q}_{max,\omega_k}(N,F)$ vanishes with an estimate that is
independent of $k$, then $H^{n,q}_{max,\omega}(N,F)$ vanishes with the same estimate.
\end{lem}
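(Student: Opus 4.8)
The plan is to exploit the good behaviour of $(n,q)$-forms not under resolution (as in Lemma~\ref{lem:dV}) but under a \emph{decrease} of the metric, which rests on a pointwise comparison. Fix a point of $N$ and two Hermitian metrics $\rho\geq\rho'$ on a neighbourhood; choosing holomorphic (linear) coordinates at the point that diagonalise $\rho$ with respect to $\rho'$ and writing an $(n,s)$-form as $\alpha=\sum_{|K|=s}\alpha_K\,dz_1\wedge\cdots\wedge dz_n\wedge d\overline{z_K}$, a direct computation gives
\[
|\alpha|^2_{\rho}\,dV_{\rho}=\sum_{|K|=s}|\alpha_K|^2\Big(\prod_{j\in K}\lambda_j^{-1}\Big)\,dV_{\rho'},
\]
where $\lambda_1,\dots,\lambda_n\geq 1$ are the eigenvalues of $\rho$ relative to $\rho'$: the factor $\prod_j\lambda_j$ coming from $dV_\rho$ cancels against the factor $\prod_j\lambda_j^{-1}$ coming from $|dz_1\wedge\cdots\wedge dz_n|^2_\rho$, and this cancellation (which fails for general $(p,q)$-forms) is exactly where the top holomorphic degree enters. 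Hence $\rho\geq\rho'$ implies $|\alpha|^2_\rho\,dV_\rho\leq|\alpha|^2_{\rho'}\,dV_{\rho'}$ pointwise, and if $\rho_m\downarrow\rho'$ then $\lambda_j^{(m)}\downarrow 1$, so $|\alpha|^2_{\rho_m}\,dV_{\rho_m}\uparrow|\alpha|^2_{\rho'}\,dV_{\rho'}$ pointwise. In particular, since $\omega_k\geq\omega$ and $\omega_k\leq\omega_m$ for $k\geq m$, on $(n,s)$-forms one has $\|\cdot\|_{\omega_k}\leq\|\cdot\|_\omega$, $\|\cdot\|_{\omega_m}\leq\|\cdot\|_{\omega_k}$ $(k\geq m)$, and $\|\cdot\|_{\omega_m}^2\uparrow\|\cdot\|_\omega^2$ by monotone convergence.

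Now let $\phi$ be a $\dq$-closed $(n,q)$-form on $N$ with $\|\phi\|_\omega<\infty$. Since $\dq$-closedness is metric-independent and $\|\phi\|_{\omega_k}\leq\|\phi\|_\omega$, the hypothesis that $H^{n,q}_{(2)}(N,\omega_k)=0$ with a $k$-independent estimate yields $(n,q-1)$-forms $\nu_k$ with $\dq\nu_k=\phi$ and $\|\nu_k\|_{\omega_k}\leq C\|\phi\|_{\omega_k}\leq C\|\phi\|_\omega$, $C$ independent of $k$. For fixed $m$ and every $k\geq m$ this gives $\|\nu_k\|_{\omega_m}\leq\|\nu_k\|_{\omega_k}\leq C\|\phi\|_\omega$, so the tail $\{\nu_k\}_{k\geq m}$ is bounded in the Hilbert space $L^{n,q-1}_{\omega_m}(N)$. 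A diagonal argument then produces a subsequence $(\nu_{k_j})_j$ with $k_j\to\infty$ that converges weakly in $L^{n,q-1}_{\omega_m}(N)$ for every $m$; since on each compact subset of $N$ all these norms are mutually comparable, the weak limits coincide and define a single $(n,q-1)$-form $\nu$, which is also the limit of $(\nu_{k_j})$ in the sense of currents. Pairing $\dq\nu_{k_j}=\phi$ with smooth compactly supported forms and passing to the limit yields $\dq\nu=\phi$ in the sense of distributions on $N$. Finally, weak lower semicontinuity of the norm on $L^{n,q-1}_{\omega_m}(N)$ together with the bound above (valid for all but finitely many $k_j$) gives $\|\nu\|_{\omega_m}\leq\liminf_j\|\nu_{k_j}\|_{\omega_m}\leq C\|\phi\|_\omega$ for every $m$; letting $m\to\infty$ and invoking $\|\nu\|_{\omega_m}^2\uparrow\|\nu\|_\omega^2$ from the first paragraph, we obtain $\nu\in L^2_\omega$ with $\|\nu\|_\omega\leq C\|\phi\|_\omega$. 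Since also $\dq\nu=\phi\in L^2_\omega$, this is precisely the vanishing of $H^{n,q}_{(2)}(N,\omega)$ with the same estimate.

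I expect the genuinely delicate part to be the functional analysis of the last step: one must keep track of the fact that the $\nu_k$ a priori live in different Hilbert spaces $L^{n,q-1}_{\omega_m}(N)$, perform the diagonal extraction so that weak convergence holds in all of them simultaneously, and identify the resulting weak limit unambiguously as a current (so that $\dq$ and the norm bound may both be passed to the limit). The rest is routine: the degree restriction to $(n,q)$-forms is used exactly once, in the pointwise identity of the first paragraph, which is the one place where an argument for general $(p,q)$-forms would break down.
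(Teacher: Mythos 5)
Your proof is correct and is essentially the argument the paper relies on: the paper does not reproduce a proof but refers to Demailly (Th\'eor\`eme 4.1 in \cite{De}), Ohsawa (Proposition 4.1 in \cite{Oh2}) and Pardon--Stern (Lemma 2.3 in \cite{PS1}), all of which rest on exactly your two ingredients --- the pointwise monotonicity $|\alpha|^2_{\rho}\,dV_\rho\leq|\alpha|^2_{\rho'}\,dV_{\rho'}$ for $(n,s)$-forms when $\rho\geq\rho'$, and weak compactness plus lower semicontinuity to pass the uniformly bounded solutions $\nu_k$ to a limit solution in $L^2_\omega$.
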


For the proof, we refer either to \cite{De}, Theorem 4.1, to \cite{Oh2}, Proposition 4.1,
or to \cite{PS1}, Lemma 2.3. 

Combining Theorem \ref{thm:df2}, Lemma \ref{lem:complete}
and Lemma \ref{lem:vanishing}, we see that the $\dq_w$-equation (in the sense of distributions) can be solved in the
$L^2$-sense for $F$-valued $(n,q)$-forms, $q\geq1$, on $U^*=U\setminus\Sing X$ with respect to the metric $\omega=i\partial\dq G$:
$$H^{n,q}_{max,\omega}(U^*,F)=0\ \ ,\ q\geq 1.$$
But $\omega$ is quasi-isometric to our original metric $h$ on any smaller subset $U'\subset\subset U$.
Hence, we see that in degree $(n,q)$, $q\geq 1$, the $\dq$-equation can be solved in the sense of distributions
in the $L^2$-category locally on a Hermitian complex space.

That completes the proof of Theorem \ref{thm:exactness1}.
Note that $\mathcal{K}_X(F) = \ker\dq_w \subset \mathcal{C}^{n,0}(F)$ by our Definition \eqref{defn:KX}
and that it is easy to see that the sheaves $\mathcal{C}^{n,q}(F)$ are fine.

\subsection{Proof of Theorem \ref{thm:A}}
It is now not hard to prove Theorem \ref{thm:A}. We just have to combine Theorem \ref{thm:resolution}
and Theorem \ref{thm:exactness1}.

To make that precise, let $X$ be compact Hermitian complex space of pure dimension $n$,
$\pi: M\rightarrow X$ a resolution of singularities, and $L\rightarrow M$ a Hermitian holomorphic line bundle
which is locally semi-positive with respect to the base space $X$.

Let $\sigma$ be any positive definite Hermitian metric on $M$.
Recall the morphism of complexes
\begin{eqnarray}\label{eq:morph21}
\pi^*: (\mathcal{C}^{n,*}(\pi_* L),\dq_w) \rightarrow (\pi_* (\mathcal{C}^{n,*}_\sigma(L)),\pi_* \dq_w)
\end{eqnarray}
that we set up in Section \ref{sssec:wresolution}, \eqref{eq:morph2}.
As we have seen (cf. \eqref{eq:morph3}), this morphism of complexes induces a morphism
on the cohomology of the complexes,
\begin{eqnarray}\label{eq:morph22}
\pi^*: H^q\big(\Gamma(\Omega,\mathcal{C}^{n,*}(\pi_* L))\big) \longrightarrow H^q\big(\Gamma(\pi^{-1}(\Omega),\mathcal{C}^{n,*}_\sigma(L))\big),
\end{eqnarray}
for any open set $\Omega\subset X$ and all $q\geq 0$.

But now Theorem \ref{thm:exactness1} and Theorem \ref{thm:resolution} tell us that both the complexes in \eqref{eq:morph21}
are a fine resolution for $\mathcal{K}_X(\pi_* L)=\pi_* \mathcal{K}_M(L)$. Thus, the map $\pi^*$ in \eqref{eq:morph22}
is an isomorphism for any open set $\Omega\subset X$ and all $q\geq 0$.

If $X$ is compact (as assumed) and we choose $\Omega=X$, then the left hand side in \eqref{eq:morph22} is 
by definition just $H^{n,q}_{max}(X^*,\pi_* L)$ and the right hand side is just $H^{n,q}(M,L)$.

\subsection{$L^2$-Duality -- Proof of Theorem \ref{thm:Ad}}

Under the assumptions of Theorem \ref{thm:A}, we just need to define a map on cohomology classes
\begin{eqnarray}\label{eq:dualmap1}
\pi_*: H^{0,n-q}(M,L^*) \longrightarrow H^{0,n-q}_{min}(X^*,\pi_* L^*)
\end{eqnarray}
which is dual to the isomorphism
\begin{eqnarray}\label{eq:dualmap2}
\pi^*: H^{n,q}_{max}(X^*,\pi_* L) \overset{\cong}{\longrightarrow} H^{n,q}(M,L).
\end{eqnarray}
Let $\sigma$ be a positive definite Hermitian metric on $M$, and $E$ the exceptional set of the resolution $\pi: M\rightarrow X$.
Then there is a natural (combined) map
\begin{eqnarray}\label{eq:dualmap3}
H^{0,n-q}_{min}(M\setminus E,L^*) \rightarrow H^{0,n-q}_{max}(M\setminus E,L^*) \overset{\cong}{\longrightarrow} H^{0,n-q}_{max}(M,L^*).
\end{eqnarray}
The map on the left-hand side of \eqref{eq:dualmap3} is well-defined as $\Dom(\dq_{min}) \subset \Dom(\dq_{max})$,
and the map on the right-hand side is well-defined and an isomorphism by the fact the the $\dq$-equation in the $L^2$-sense
extends over analytic sets (which we used before to define \eqref{eq:morph22}).
But is also known that the map on the left-hand side of \eqref{eq:dualmap3} is an isomorphism
(see e.g. \cite{PS1}, Proposition 1.12, and note that the line bundle $L^*$ does not matter for this statement).

So, a cohomology class $[\phi] \in H^{0,n-q}_{max}(M,L^*)$ has a representative $\phi\in\Dom \dq_{min}(M\setminus E,L^*)$
(which should be chosen to be identically zero if $[\phi]$ is zero).
It follows by \eqref{eq:l2est4} and the definition of the $\dq_{min}$-operator
that $(\pi|_{M\setminus E}^{-1})^* \phi \in \Dom\dq_{min}(X^*,\pi_* L^*)$,
and so we define the map \eqref{eq:dualmap1} by the assignment
\begin{eqnarray*}
\pi_* [\phi] := [(\pi|_{M\setminus E}^{-1})^* \phi] \in H^{0,n-q}_{min}(X^*,\pi_* L^*),
\end{eqnarray*}
where $\phi$ is such an appropriately chosen representative. So, the transformation law
$\int_{X^*} \eta\wedge\omega = \int_{M\setminus E} \pi^*\eta\wedge\pi^*\omega=\int_M\pi^*\eta\wedge\pi^*\omega$ induces the commutative diagram
\begin{eqnarray*}
\begin{xy}
  \xymatrix{
      H^{0,n-q}(M,L^*) \ar[r]^{\pi_*} \ar[d]^{\cong}  &  H^{0,n-q}_{min}(X^*,\pi_* L^*) \ar[d]^{\cong}  \\
      \big(H^{n,q}(M,L)\big)^*  \ar[r]^{(\pi^*)^*}_{\cong} &  \big(H^{n,q}_{max}(X^*,\pi_* L)\big)^* }
\end{xy}
\end{eqnarray*}
where it is well-known that the vertical arrow on the left hand-side is an isomorphism (Serre duality pairing),
and the vertical arrow on the right-hand side is an isomorphism by Theorem \ref{thm:duality} ($L^2$-Serre duality).

Note that the assumptions of Theorem \ref{thm:duality} are fulfilled by the following observation.
The cohomology groups $H^{n,q}_{max}(X^*,\pi_* L)$ are finite-dimensional for all $0\leq q\leq n$
because they are (by use of Theorem \ref{thm:A}) isomorphic to the groups $H^{n,q}(M,L)$
and $M$ is a compact complex manifold. But then all the $\dq$-operators
$$\dq_{max}: L^2_{n,q}(X^*,\pi_* L) \rightarrow L^2_{n,q+1}(X^*,\pi_* L)$$
under consideration have closed range by a standard argument following from Banach's open mapping theorem
(see \cite{HeLe}, Appendix 2.4).

\subsection{Vanishing Theorems}

\begin{defn}\label{defn:almost-positive}
A holomorphic line bundle $E$ on a (compact) complex space $X$ is called almost positive
if there is a Hermitian metric on $E$ whose curvature is semi-positive everywhere and positive
on some open set.
\end{defn}

This is a pretty useful concept in the context of modifications because it is bimeromorphically invariant.
We can use the following vanishing theorem of Nakano-Kodaira-Grauert-Riemenschneider type:

\begin{thm}\label{thm:NK}
Let $M$ be a compact K\"ahler manifold of dimension $n$ and $E$ an almost positive
line bundle on $M$. Then: $H^{n,q}(M,E)=0$ for $q>0$.
\end{thm}

\begin{proof}
The proof is just a variation of well-known arguments, so we shall be brief.
If $u$ is a $\dq$-harmonic $(n,q)$-form on $M$, then the Bochner-Kodaira-Nakano inequality (see \cite{De2}, 13.3) yields
$$\int_M \langle i\Theta(E)u,u\rangle\ dV_M \leq 0.$$
On the other hand (see \cite{De2}, 13.6),
$\langle i\Theta(E)u,u\rangle(x) \geq \gamma(x) |u(x)|$,
where $\gamma(x)$ is the smallest eigenvalue of the curvature of $E$ in the point $x$.
Thus, $u$ must vanish identically on the open set where $E$ is positive.
So, as a harmonic form, it vanishes everywhere.
\end{proof}

We can now show:

\begin{thm}\label{thm::vanishing1}
Let $X$ be a pure-dimensional subvariety of a compact K\"ahler manifold, $\dim X=n$, $F\rightarrow X$
an almost positive holomorphic line bundle and $q>0$. Then:
\begin{eqnarray*}
H^q(X,\mathcal{K}_X(F)) = H^{n,q}_{max}(X^*,F)=H^{0,n-q}_{min}(X^*,F^*)=0.
\end{eqnarray*}
\end{thm}

\begin{proof}
We may assume that $\pi: M\rightarrow X$ is an embedded resolution
obtained by finitely many blow-ups (i.e. monoidal transformations) along smooth centers, see \cite{BiMi}, Theorem 13.4.
So, $M$ can be interpreted as a submanifold in a finite product of K\"ahler manifolds,
inheriting a K\"ahler metric.
So, the statement follows directly by combining Theorem \ref{thm:A},
Theorem \ref{thm:Ad} and Theorem \ref{thm:NK}.
\end{proof}

\bigskip

\section{Gorenstein spaces with canonical singularities}\label{sec:canonical}

\subsection{Canonical sheaves on Gorenstein spaces}

For a complex space $X$ of pure dimension, we denote by $\omega_X$ the Grothendieck canonical sheaf,
i.e. the dualizing sheaf. If, for an open set $U$ in $X$, $X|_U \subset N$ is a local holomorphic embedding in a complex manifold $N$, then
$\omega_X \cong \mathcal{E}xt^r_{\OO_N}(\OO_X,\mathcal{K}_N)$, where $\mathcal{K}_N$ is the usual canonical sheaf on $N$
and $r$ is the codimension of $X$ in $N$.

Assume that $X$ is normal and $\iota: X\setminus\Sing X\hookrightarrow X$ the natural inclusion.
Then $\omega_X \cong \iota_*(\omega_{X\setminus\Sing X})$. This is \cite{GrRie}, Satz 3.1 (keep in mind that the singular set
of a normal space is of codimension $\geq$ 2). It is then clear that there is a natural inclusion 
of the Grauert-Riemenschneider canonical sheaf into the Grothendieck dualizing sheaf, $\mathcal{K}_X \subset \mathcal{\omega}_X$,
and this inclusion is strict in general (see \cite{GrRie}).

\begin{defn}\label{defn:Gorenstein}
Let $X$ be a normal complex space. Then $X$ is called Gorenstein if it is Cohen-Macaulay and
$\omega_X \cong \iota_*(\omega_{X\setminus\Sing X})$ is locally free (of rank one).
\end{defn}

We also recall what is meant by canonical singularities (see \cite{Kol}, Sect. 3):

\begin{defn}\label{defn:canonical}
Let $X$ be a compact Gorenstein space.
Then the dualizing sheaf $\omega_X$ is invertible and corresponds to a canonical Cartier divisor $K_X$.
We say that $X$ has canonical singularities if the following condition holds :
If $\pi: M \rightarrow X$ is any resolution of singularities and $K_M$
is the canonical divisor of $M$, so that we can write
\begin{eqnarray}\label{eq:gro3}
K_M = \pi^* K_X + \sum a_j E_j,
\end{eqnarray}
where the $E_j$ are the irreducible components of the exceptional divisor and the $a_j$ are rational coefficients.
Then $a_j\geq 0$ for all indices $j$.
\end{defn}

Thus, $X$ has canonical singularities precisely if $\pi^* \omega_X \subset \mathcal{K}_M$ for any resolution
of singularities $\pi: M\rightarrow X$ (as \eqref{eq:gro3} is equivalent to $\mathcal{K}_M=\pi^* \omega_X\otimes\OO(\sum a_j E_j)$ where
$\sum a_jE_j$ is an effective divisor).
So, note the following well-known essential fact about canonical sheaves on Gorenstein spaces
(which can be used even as a definition for canonical singularities):

\begin{thm}\label{thm:canonical}
Let $X$ be a compact Gorenstein space. Then the natural inclusion
$\mathcal{K}_X\subset \mathcal{\omega}_X$ induces an isomorphism $\mathcal{K}_X = \mathcal{\omega}_X$
exactly if $X$ has canonical singularities.
\end{thm}

\begin{proof}
Let $\pi: M\rightarrow X$ be any resolution of singularities.
Then $\mathcal{K}_X=\pi_* \mathcal{K}_M$ by \eqref{KM}.
Assume first that $\mathcal{K}_X = \omega_X$. Then:
$$\pi^* \omega_X = \pi^* \mathcal{K}_X = \pi^*\pi_* \mathcal{K}_M  \subset \mathcal{K}_M,$$
and so $X$ has canonical singularities.
Conversely, assume that $X$ has canonical singularities. We just have to show that $\omega_X \subset \mathcal{K}_X$.
As $X$ has canonical singularities, we know that $\pi^* \omega_X \subset \mathcal{K}_M$. But then:
$$\omega_X \subset \pi_* \pi^* \omega_X \subset \pi_* \mathcal{K}_M = \mathcal{K}_X.$$
\end{proof}

\subsection{Proof of Theorem \ref{thm:Ac}}

The first part of the statement is clear as we have 
$\mathcal{\omega}_X\otimes\mathcal{F} = \mathcal{K}_X\otimes\mathcal{F} \cong \mathcal{K}_X(F)$
by use of Theorem \ref{thm:canonical} and then we can apply Theorem \ref{thm:A}.
The second part follows by our $L^2$-duality Theorem \ref{thm:Ad} and Serre duality for singular spaces:

\begin{thm}{\bf (Ramis-Ruget \cite{RR})}
Let $X$ be a compact Cohen-Macaulay space of pure dimension $n$ and $\mathcal{F}\rightarrow X$ a locally free sheaf.
Then
$$H^q(X,\mathcal{F}) \cong H^{n-q}(X,\omega_X\otimes\mathcal{F}^*)\ ,\ 0\leq q\leq n.$$
\end{thm}

\medskip
\section{$L^2$-Dolbeault cohomology at isolated singularities}\label{sec:dolb}

As a preparation for the proof of Theorem \ref{thm:B}, we study different kinds
of $L^2$-Dolbeault cohomology in this section. Besides $\dq_{max}$ and $\dq_{min}$, we consider two other
closed extensions of the $\dq$-operator for which we prove some vanishing theorems.
Throughout this section, let
$X$ be a pure-dimensional complex analytic set in $\C^L$ of dimension $n\geq 2$ with an isolated singularity at the origin
such that $X$ carries the restriction of the Euclidean metric.
For small $r>0$, let $X^*=X-\{0\}$, $X_r= X\cap B_r(0)$ and $X_r^*=X_r-\{0\}$.

\subsection{$\dq$-operators with mixed boundary conditions}

Let
\begin{eqnarray*}\label{eq:l2}
L^{p,q}_{cpt} (X_r^*) &:=& \{ f\in L^{p,q}(X_r^*): \supp f\subset\subset X^*_r\},\\
L^{p,q}_{0}(X_r^*) &:=& \{f\in L^{p,q}(X_r^*): \supp f\cap \{0\} =\emptyset\},\\
L^{p,q}_b(X_r^*) &:=& \{f\in L^{p,q}(X_r^*): \supp f\cap bB_r(0)=\emptyset\},
\end{eqnarray*}
where the essential support $\supp f$ is taken in $X$. We may now consider the
operators (defined in the sense of distributions)
\begin{eqnarray*}
\dq_{cpt}: & L^{p,q}_{cpt}(X_r^*) \rightarrow L^{p,q+1}_{cpt}(X_r^*),\\
\dq_0: & L^{p,q}_{0}(X_r^*) \rightarrow L^{p,q+1}_{0}(X_r^*),\\
\dq_b: & L^{p,q}_{b}(X_r^*) \rightarrow L^{p,q+1}_{b}(X_r^*),
\end{eqnarray*}
and the formal adjoints $\theta_{cpt}=-\o{*}\dq_{cpt}\o{*}$, $\theta_0=-\o{*}\dq_0\o{*}$,
$\theta_b=-\o{*}\dq_b\o{*}$. All these operators are densely defined and graph closable because the
smooth forms with compact support are dense in each of the special $L^2$-spaces under consideration.
On the other hand, each of these $L^2$-spaces is dense in $L^{p,q}(X_r^*)$ resp. $L^{p,q+1}(X_r^*)$.
So, we can now consider
the closed extensions of $\dq_{cpt}$, $\dq_0$, $\dq_b$, respectively $\theta_{cpt}$, $\theta_0$ and $\theta_b$ 
as operators 
$L^{p,q}(X_r^*) \rightarrow L^{p,q+1}(X_r^*)$,
$L^{p,q+1}(X_r^*)\rightarrow L^{p,q}(X_r^*)$, respectively.

The maximal closed extensions of all these operators are the $\dq$ respectively the $\theta$-operator
in the sense of distributions $\dq_w$ and $\theta_w$, but we obtain some new minimal closed extensions.
First note that we already discussed the operators
$$\dq^{**}_{cpt} = \dq_{min}\ \ \mbox{ and }\ \ \theta_{cpt}^{**} = \theta_{min},$$
each coming with boundary conditions at both boundaries, $0$ and $bB_r(0)\cap X$.

We denote the other closures of the graphs as follows:
\begin{eqnarray*}
\dq_0^{**} =: \dq_{s,w}\ \ \mbox{ and } \ \ \theta_0^{**}=: \theta_{s,w},\\
\dq_b^{**} =: \dq_{w,s}\ \ \mbox{ and } \ \ \theta_b^{**}=: \theta_{w,s}.
\end{eqnarray*}
Here, the operators $\dq_{s,w}$ and $\theta_{s,w}$ have boundary conditions at $0$, whereas $\dq_{w,s}$ and $\theta_{w,s}$
come with boundary conditions at $bB_r(0)\cap X$.

For the $L^2$-adjoints, we obtain:

\begin{lem}\label{lem:sw}
$$\dq_{s,w}^* = \theta_{w,s}\ \ \mbox{ and } \ \ \dq_{w,s}^* = \theta_{s,w}.$$
\end{lem}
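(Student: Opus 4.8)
The statement $\dq_{s,w}^* = \theta_{w,s}$ and $\dq_{w,s}^* = \theta_{s,w}$ is a pure functional-analysis fact about Hilbert space adjoints of minimal and maximal closed extensions, completely parallel to the identities \eqref{eq:adjoint1} and \eqref{eq:adjoint2} already established for $\dq_s,\dq_w$. First I would recall the general principle: if $T$ is a densely defined, graph-closable operator between Hilbert spaces, then $(T^{**})^* = T^*$, and moreover $T^*$ is always closed; combined with the elementary duality $(-\o{*}S\o{*})^* = -\o{*}S^*\o{*}$ (valid because $\o{*}$ is a conjugate-linear isometry with $\o{*}\,\o{*} = \pm\,\mathrm{id}$ on the relevant bidegrees, up to the sign bookkeeping already used above), everything reduces to identifying the right ``formal adjoint with boundary conditions'' on each side.

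\textbf{Step 1: pin down what the operators are.} By definition $\dq_{s,w} = \dq_0^{**}$ is the closure of the graph of $\dq_0$, the $\dq$-operator acting on forms whose support avoids $0$ but may touch $bB_r(0)\cap X$; likewise $\dq_{w,s}=\dq_b^{**}$ is the closure of the graph of $\dq_b$. Similarly $\theta_{s,w}=\theta_0^{**}$ and $\theta_{w,s}=\theta_b^{**}$. The key structural input is the integration-by-parts identity $(\dq_{cpt}f,\psi) = (f,\theta_{cpt}\psi)$ for forms compactly supported in $X_r^*$, which has no boundary terms; the relation $\theta = -\o{*}\dq\o{*}$ from Lemma \ref{lem:theta} (here with $Z=\emptyset$ and the Euclidean metric) encodes this.

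\textbf{Step 2: the adjoint computation.} I would argue $\dq_{s,w}^* = \dq_0^{***} = \dq_0^* = (\text{by definition of adjoint})$ the operator that reads: $\psi\in\Dom(\dq_0^*)$ iff there is $\eta$ with $(\dq_0 f,\psi) = (f,\eta)$ for all $f\in\Dom(\dq_0) = L^{p,q}_0(X_r^*)$. Since the test forms $f$ range over all $L^2$ forms supported away from $0$ (with no restriction near $bB_r(0)$), the identity $(\dq f,\psi)=(f,\eta)$ says exactly that $\eta = \theta\psi$ in the distributional sense on $X_r^*$ \emph{and} that no boundary contribution arises at $bB_r(0)\cap X$ — i.e. $\psi$ satisfies the ``Neumann-type'' boundary condition at $bB_r(0)$ that characterizes $\theta_{w,s}$. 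Writing this through $\o{*}$: $\dq_0^* = -\o{*}\,(\o{*}\dq_0\o{*})^*\,\o{*}$ where $\o{*}\dq_0\o{*}$ is $-\theta$ on forms supported away from $0$, whose maximal adjoint is $\dq$ with boundary condition at the sphere, namely $\dq_b$; closing the graph gives $\theta_{w,s}$. Hence $\dq_{s,w}^* = \theta_{w,s}$. The second identity $\dq_{w,s}^* = \theta_{s,w}$ is obtained by the identical argument with the roles of $0$ and $bB_r(0)\cap X$ interchanged, using that $\dq_b$ has test forms avoiding the sphere but possibly touching $0$, so its adjoint acquires the boundary condition at $0$ that defines $\theta_{s,w}$.

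\textbf{Main obstacle.} The only genuinely delicate point is justifying that ``no boundary term at $bB_r(0)\cap X$'' is exactly the condition cutting out $\theta_{w,s} = \theta_b^{**}$ rather than some intermediate extension — in other words, verifying that the abstract adjoint $\dq_0^*$, which is automatically closed, coincides with the \emph{closure of the graph} of $\theta_b$ and not merely with some closed extension of it. I would handle this by the standard density/graph argument: $\theta_b \subset \dq_0^*$ is immediate from integration by parts (the defining integrations for $\theta_b$ have vanishing boundary contribution precisely because $\dq_0$'s domain consists of forms with support away from $0$); conversely, $\dq_0^*$ is contained in $\theta_w = \dq_{cpt}^*$ and hence is a restriction of the distributional $\theta$, and one checks that elements of $\Dom(\dq_0^*)$ can be approximated (in graph norm) by forms in $\Dom(\theta_b)$ using a cutoff near $bB_r(0)$ together with a Friedrichs mollifier away from $0$ — exactly the mechanism behind Lemma \ref{lem:equality} and Lemma \ref{lem:extension}, so the bookkeeping is routine even if the approximation itself requires care. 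Modulo this, the proof is three lines of adjoint algebra and I would present it as such, citing \eqref{eq:adjoint1}, \eqref{eq:adjoint2} and Lemma \ref{lem:theta} for the template.
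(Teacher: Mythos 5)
Your reduction of the statement to $\dq_0^{*}=\theta_b^{**}$ (via $\dq_{s,w}^{*}=\dq_0^{***}=\dq_0^{*}$) and your easy inclusion $\theta_b\subset\dq_0^{*}$ by integration by parts --- which gives $\theta_{w,s}=\theta_b^{**}\subset\dq_{s,w}^{*}$ --- agree with the first half of the paper's argument. The genuine gap is in the converse inclusion $\Dom\dq_{s,w}^{*}\subset\Dom\theta_{w,s}$. Membership in $\Dom\theta_{w,s}=\Dom\theta_b^{**}$ means approximability \emph{in graph norm} by forms supported away from $bB_r(0)\cap X$, and the mechanism you propose for producing such approximants --- a cutoff $\chi_\epsilon$ vanishing near the sphere, followed by mollification --- does not converge: $\theta_w(\chi_\epsilon f)=\chi_\epsilon\theta_w f+[\theta,\chi_\epsilon]f$, and the commutator term is of size $\epsilon^{-1}\|f\|_{L^2(\{r-\epsilon<|z|<r\})}$, which for a general element of $\Dom\theta_w$ need not tend to $0$. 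This is precisely the point where the weak "adjoint boundary condition" at the sphere encoded in $\Dom\dq_0^{*}$ has to be converted into graph-norm approximability, and neither a Friedrichs mollifier nor Lemma \ref{lem:equality} or Lemma \ref{lem:extension} (which concern the interior divisor $E$, respectively an analytic set of positive codimension, not a real boundary hypersurface) provides that conversion.

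The paper sidesteps the issue by cutting off at the \emph{other} end. Take $\chi\equiv 0$ near the origin and $\chi\equiv 1$ outside $B_{r/2}(0)\cap X$, and split $f=\chi f+(1-\chi)f$. The piece $(1-\chi)f$ is supported away from the sphere and has $\theta_w\big((1-\chi)f\big)\in L^2$, so it lies in $\Dom\theta_b\subset\Dom\theta_{w,s}$ with no approximation needed. For the piece $\chi f$ one checks that $\chi f\in\Dom\dq_w^{*}$ (because for every $g\in\Dom\dq_w$ the form $\chi g$ is supported away from the origin, hence lies in $\Dom\dq_{s,w}$, and the commutator $\dq\chi\wedge g$ is harmless); then the purely functional-analytic identity \eqref{eq:adjoint1} gives $\dq_w^{*}=\theta_s=\o{\theta_{cpt}}$, so $\chi f$ is automatically approximable by compactly supported forms, and $\theta_s=\theta_{cpt}^{**}\subset\theta_b^{**}=\theta_{w,s}$. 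In other words, the delicate approximation near the sphere is never carried out directly; it is absorbed into the soft identity $(\theta_{cpt}^{*})^{*}=\o{\theta_{cpt}}$. You should replace your cutoff-at-the-sphere step by this decomposition; the rest of your outline then goes through.
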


\begin{proof}
We will prove $\dq_{s,w}^*=\theta_{w,s}$, the second statement follows similarly.
It is enough to show that
$\Dom \dq_{s,w}^* = \Dom \theta_{w,s}$,
for if a form $f$ is in both domains, then it is clear that $\dq_{s,w}^* f =\theta_w f = \theta_{w,s}f$.
Let $f\in \Dom\theta_{w,s}$. So, $(f,\theta_{w,s})$ can be approximated by forms with support away from the boundary $b B_r(0)$.
Hence, partial integration is possible:
$$(f,\dq_{s,w} g)_{X_r^*} = (\theta_{w,s} f, g)_{X_r^*} \ \forall g\in\Dom\dq_{s,w},$$
since $(g,\dq_{s,w}g)$ can be approximated by forms with support away from the origin.

Conversely, let $f\in \Dom\dq_{s,w}^*$.
Let $\chi \in C^\infty(X)$ be a cut off function such that $\chi\equiv 0$
in a neighborhood of the origin and $\chi\equiv 1$ outside $B_{r/2}(0)\cap X$.
Then
$$\chi f \in \Dom\dq_w^* = \Dom \theta_s \subset \Dom\theta_{w,s}.$$
On the other hand, it is clear that $(1-\chi)f \in L_b^{p,q}(X_r^*)$
is in $\Dom\theta_{w,s}$. Hence:
$$f=\chi f + (1-\chi) f\ \in \Dom\theta_{w,s}.$$
\end{proof}

Let $H^{p,q}_{s,w}(X_r^*)$ and $H^{p,q}_{w,s}(X_r^*)$ be the $L^2$-Dolbeault cohomology groups with respect to the
$\dq_{s,w}$ and the $\dq_{w,s}$-operator, respectively. If we define the spaces of harmonic forms as
\begin{eqnarray*}
\mathcal{H}^{p,q}_{s,w}(X_r^*) &=& \ker \dq_{s,w}\cap \ker \dq_{s,w}^*,\\
\mathcal{H}^{p,q}_{w,s}(X_r^*) &=& \ker \dq_{w,s}\cap \ker \dq_{w,s}^*,
\end{eqnarray*}
then we obtain as in Theorem \ref{thm:duality} the duality:
\begin{eqnarray}\label{eq:sw}
\mathcal{H}^{p,q}_{s,w}(X_r^*) \cong \mathcal{H}^{n-p,n-q}_{w,s}(X_r^*),
\end{eqnarray}
where the isomorphism is given by application of the $\o{*}$-operator.
\eqref{eq:sw} extends to an isomorphism of the associated
cohomology groups if one of these groups is finite-dimensional.
So, we can include a dual statement in all the following results.

\subsection{$L^2$-vanishing theorems at isolated singularities}

\begin{lem}\label{lem:dolb1}
$$H^{0,n-q}_{min}(X_r^*) \cong H^{n,q}_{max}(X_r^*)=0\ ,\  q\geq 1.$$
\end{lem}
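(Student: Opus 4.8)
The plan is to split the statement into the vanishing $H^{n,q}_w(X_r^*)=0$ for $q\geq 1$ and the isomorphism $H^{0,n-q}_s(X_r^*)\cong H^{n,q}_w(X_r^*)$, and to deduce the second from the first by $L^2$-duality, so that everything reduces to the vanishing.

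For the duality I would note that the $\dq_w$-operator on $(n,\cdot)$-forms and the $\dq_s$-operator on $(0,\cdot)$-forms are Hilbert space adjoints of one another through the conjugate-linear Hodge operator $\o{*}$ — this is \eqref{eq:adjoint1} and \eqref{eq:adjoint2} with $Z=\emptyset$ and $p=n$ — so that, exactly as in the proof of Theorem \ref{thm:harmonic} and as already recorded in \eqref{eq:sw} for the mixed operators, $\o{*}$ carries $\mathcal{H}^{n,q}_w(X_r^*)=\ker\dq_w\cap\ker\dq_w^*$ isomorphically onto $\mathcal{H}^{0,n-q}_s(X_r^*)$. Granting $H^{n,q}_w(X_r^*)=0$ for $q\geq1$, a harmonic form then lies in $\ker\dq_w=\range\dq_w$ and is orthogonal to $\range\dq_w$, hence vanishes; so $\mathcal{H}^{n,q}_w(X_r^*)=0$ and therefore $\mathcal{H}^{0,n-q}_s(X_r^*)=0$. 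Since $H^{n,q}_w(X_r^*)=0$ is finite-dimensional, harmonic representation (the local version of Theorem \ref{thm:harmonic}, in the sense of the remark following \eqref{eq:sw}) identifies both cohomology groups with their harmonic spaces, yielding $H^{0,n-q}_s(X_r^*)\cong H^{n,q}_w(X_r^*)=0$.

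It remains to prove $H^{n,q}_w(X_r^*)=0$ for $q\geq1$, and this is the local vanishing that already occurred in the proof of Theorem \ref{thm:A} (cf. Lemma \ref{lem:resolution2} and the discussion preceding it), now applied at the isolated singularity $0\in X$. After shrinking $r$ if necessary, I would pick holomorphic $f_1,\dots,f_m$ cutting out $\{0\}$ in $X_r$ and set $F=-\log(r^2-|z|^2)$ and $F_k=F-\frac1k\log\big(-\log\sum|f_j|^2\big)$ as in \eqref{eq:Fk}. By Lemma \ref{lem:complete} the K\"ahler metrics $\omega_k=i\partial\dq F_k$ on $X_r^*$ are complete, decrease monotonically to $\omega=i\partial\dq F$, and have $\langle\partial F_k,\partial F_k\rangle_{\omega_k}$ bounded uniformly in $k$; note that $\omega$ is already complete at the (strictly pseudoconvex) outer boundary $X\cap bB_r(0)$ and quasi-isometric to the Euclidean metric $h$ near $0$. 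Donnelly--Fefferman (Lemma \ref{lem:df}) gives $H^{n,q}_{(2)}(X_r^*,\omega_k)=0$ for $q\geq1$ with a bound independent of $k$, Lemma \ref{lem:vanishing} — valid since $(n,q)$-forms behave well under a decreasing family of metrics — passes to the limit to give $H^{n,q}_{(2)}(X_r^*,\omega)=0$ for $q\geq1$, and finally the comparison of $\omega$ with $h$ goes through because for $(n,q)$-forms the density $|\eta|^2_\gamma\,dV_\gamma$ is metric-independent when $q=0$ (Lemma \ref{lem:dV}) and otherwise only decreases as $\gamma$ grows; hence $H^{n,q}_w(X_r^*)=0$ for $q\geq1$.

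The delicate point is the last passage — from the complete approximants $\omega_k$ down to the incomplete $\omega$ (Lemma \ref{lem:vanishing}) and then to $h$ — which is the Ohsawa--Demailly mechanism and the reason one is forced into bidegree $(n,\cdot)$ (equivalently, after duality, the $\dq_s$-operator in bidegree $(0,\cdot)$): only there are the relevant $L^2$-densities either metric-independent or monotone. Nothing beyond Section \ref{sec:thmA} and the functional-analytic package of Section \ref{sec:ht} is needed.
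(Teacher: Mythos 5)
Your reduction of the statement to the vanishing of $H^{n,q}_w(X_r^*)$ via the conjugate-linear Hodge operator, closed ranges and harmonic representation is sound and is indeed what the paper intends. The gap is in the vanishing itself, at the point where you pass from $H^{n,q}_{(2)}(X_r^*,\omega)=0$ to $H^{n,q}_w(X_r^*)=0$. The monotonicity you invoke --- that for $(n,q)$-forms with $q\geq1$ the $L^2$-density only decreases as the metric grows --- correctly transfers the \emph{datum} $f\in L^{n,q}_h$ into $L^{n,q}_\omega$ (since $\omega=i\partial\dq\bigl(-\log(r^2-|z|^2)\bigr)\gtrsim h$ on $X_r^*$), but it works against you for the \emph{solution}: the $u$ produced by Donnelly--Fefferman and Lemma \ref{lem:vanishing} is an $(n,q-1)$-form controlled only in the $\omega$-norm, and near the outer boundary $X\cap bB_r(0)$, where $\omega$ blows up relative to $h$ (this is exactly the completeness you point out), one has $\|u\|_{\omega}\lesssim\|u\|_{h}$ and \emph{not} the reverse. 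Hence for $q\geq2$, i.e.\ $q-1\geq1$, you cannot conclude $u\in L^{n,q-1}_h$, and $H^{n,q}_w(X_r^*)$ is by definition the cohomology in the $h$-metric. Your direct argument is complete only for $q=1$, where $u$ is an $(n,0)$-form and Lemma \ref{lem:dV} makes its norm metric-independent. This is precisely why Section \ref{sec:thmA} records the conclusion only as a \emph{local} solvability statement, on a smaller set $U'\subset\subset U$ where $\omega$ and $h$ are quasi-isometric.

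The paper's proof of Lemma \ref{lem:dolb1} therefore contains an extra step that your proposal omits: one takes the local $L^2_h$-solution near the origin furnished by Section \ref{sec:thmA}, and then uses that $X_r$ has smooth strongly pseudoconvex boundary sitting inside the complex manifold $\Reg X$, away from the singularity, to extend (patch) that local solution to a solution on all of $X_r^*$ by standard methods --- a bumping or Mayer--Vietoris argument as in \cite{LM}, Section VIII.4; alternatively the paper points to \cite{FOV1}, Theorem 1.2, for an independent proof. You need to supply this globalization step, or else restrict your direct comparison argument to $q=1$.
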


\begin{proof}
Let $f\in L^{n,q}(X_r^*)$ such that $\dq_wf=0$. Then it follows by Theorem \ref{thm:exactness1}
that there exists a local $L^2$-solution of the $\dq_w$-equation at the origin.
Since $X_r$ has strongly pseudoconvex boundary in $X$, this solution can be extended to
$X_r^*$ by standard methods (see e.g. section VIII.4 in \cite{LM}). 
Another proof of this fact is in \cite{FOV2}, Theorem 1.2.
That shows $H^{n,q}_{max}(X_r^*)=0$, and $H^{0,n-q}_{min}(X_r^*)=0$ follows by duality (Theorem \ref{thm:duality}).
\end{proof}

We will now show that also $H^{0,q}_{w,s}(X_r^*)=0$ for $0\leq q\leq n-1$.
For the proof of this statement, we need two vanishing results of Forn{\ae}ss-{\O}vrelid-Vassiliadou.
The first one is Proposition 3.1 in \cite{FOV2}:

\begin{lem}\label{lem:dolb2}
Let $p+q<n$, $q>0$, and $0<r_0<r$. Let $f\in L^{p,q}(X_r^*)$ such that $\dq_w f=0$ and $\supp f\subset \o{X_{r_0}}$.
Then there exists $u\in L^{p,q-1}(X_r^*)$ with the following properties: $\dq_w u =f$ and $u$ can be extended trivially by $0$ to $X^*$
s.t. $\dq_w u=f$ on $X^*$.
\end{lem}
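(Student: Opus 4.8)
This is Proposition~3.1 of \cite{FOV1}; I sketch the line of argument one would follow. The aim is to produce $u\in L^{p,q-1}(X_r^*)$ with $\dq_w u=f$ which in addition \emph{vanishes on a neighbourhood of $bB_r(0)$ in $X$}. Granting this, the trivial-extension assertion is immediate: for a test form $\psi$ compactly supported in $X^*$, write $\psi=\chi\psi+(1-\chi)\psi$ with $\chi\equiv 1$ away from $bB_r(0)$ and $\chi\equiv 0$ near $bB_r(0)$; the term $(1-\chi)\psi$ contributes nothing to $\int_{X^*}u\wedge\dq\psi$ since $u\equiv 0$ there, while $\chi\psi$ is compactly supported in $X_r^*$, so the distributional equation $\dq_w u=f$ on $X_r^*$ applies, and as $f$ is supported in $\o{X_{r_0}}$ where $\chi\equiv 1$, one gets $\dq u=f$ on all of $X^*$. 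So the task is to build such a $u$, and I would do it in two steps.

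\emph{Local solvability near the singularity.} First solve $\dq_w u_0=f$ in $L^2(X_{r_1}^*)$ for some $r_0<r_1<r$. The mechanism is the one of Section~\ref{sec:thmA}: on $\Reg X\cap B_{r_1}(0)$ approximate the incomplete Euclidean metric by the complete K\"ahler metrics $\omega_k=i\partial\dq F_k$ of Lemma~\ref{lem:complete}; on each $(X_{r_1}^*,\omega_k)$ the Donnelly--Fefferman-type estimate of Lemma~\ref{lem:df} yields a solution with a bound independent of $k$, precisely because $p+q\neq n$ and $q>0$. The crux is the passage $k\to\infty$: for $(n,q)$-forms this is Lemma~\ref{lem:vanishing}, which exploits the good behaviour of the $L^2$-norm of top-degree forms under a decrease of the metric, whereas for general $(p,q)$-forms the norms with respect to $\omega_k$ and to $h$ are not comparable in this simple way (the degree-dependence is visible already in \eqref{eq:gamma-norm}), so the shortcut breaks down. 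This is where the real work lies, and it is exactly what \cite{FOV1} provides, by H\"ormander-type $L^2$-estimates with plurisubharmonic weights adapted to the geometry of the singularity, in the stable range $p+q<n$, $q>0$; at this point I would simply invoke \cite[Proposition~3.1]{FOV1}.

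\emph{Globalisation and support control.} Given $u_0$ on $X_{r_1}^*$, note that $f\equiv 0$ on $X\cap\{r_0<|z|<r_1\}$, so $u_0$ is $\dq_w$-closed there. Pick $\chi\in C^\infty(X)$ with $\chi\equiv 1$ on a neighbourhood of $\o{X_{r_0}}$ and $\chi\equiv 0$ for $|z|\geq r_1$. Then $\dq_w(\chi u_0)=f+g$ with $g:=\dq\chi\wedge u_0$ a $\dq_w$-closed $(p,q)$-form supported in a compact annular shell in $\Reg X$, away from both $0$ and $bB_r(0)$. It then suffices to solve $\dq_w v=g$ on $X_r^*$ with $v$ supported away from $bB_r(0)$ and set $u:=\chi u_0-v$. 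Since the relevant region lies between $\supp g$ and $bB_r(0)$, i.e.\ in a one-sided collar of the strongly pseudoconvex boundary $bB_r(0)\cap X$, this is handled by the standard $\dq$-machinery near such a boundary, exactly as in the proof of Lemma~\ref{lem:dolb1} (see \cite[\S VIII.4]{LM}).

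Thus the whole weight of the lemma rests on the local step --- the $L^2$-solvability of $\dq$ near an isolated singularity for $(p,q)$-forms with $p+q<n$, $q>0$. Unlike the $(n,q)$-case of Section~\ref{sec:thmA}, this cannot be reduced to the Donnelly--Fefferman estimate by a metric-approximation argument, and one has to appeal to the sharper local estimates of \cite{FOV1}; the globalisation and the trivial-extension statement are then routine.
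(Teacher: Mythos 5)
The paper offers no proof of this lemma at all: it is imported verbatim as Proposition~3.1 of \cite{FOV1}, which is exactly where your argument also bottoms out, so you are in agreement with the paper's (purely citational) treatment. Your surrounding scaffolding --- the reduction to a solution vanishing near $bB_r(0)\cap X$, the correct diagnosis of why the metric-approximation argument of Section~\ref{sec:thmA} does not extend to general $(p,q)$ with $p+q<n$ --- is sound but strictly speaking redundant, since the cited proposition already is the full statement, support control and trivial-extension property included.
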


The second statement is the fact that 
the $\dq_w$-equation can be solved at isolated singularities in certain degrees with a gain of regularity
that is enough to ensure that the solution is in the domain of $\dq_{s,w}$:

\begin{lem}\label{lem:fov}
Let $p+q\geq n+2$, $q>0$. Given $f\in L^{p,q}(X_r^*)$, $\dq_w f=0$ in $X_r^*$, 
there exists $u$ satisfying $\dq_w u=f$ in $X_r^*$ with
\begin{eqnarray*}\label{eq:gospel25}
\int_{X^*_{r_0}} \|z\|^{-2}\left(-\log\|z\|^2\right)^{-2}|u|^2 dV \leq C(r_0) \int_{X_r^*} |f|^2 dV
\end{eqnarray*}
where $0<r_0<r$ and $C(r_0)$ is a positive constant that depends on $r_0$.
$u$ can be approximated by a sequence of smooth forms $u_k$ with compact support
away from the origin such that $u_k\rightarrow u$, $\dq u_k\rightarrow f$ in $L^2_{p,*}(X_{r_0}^*)$,
thus $\dq_{s,w} u=f$.
\end{lem}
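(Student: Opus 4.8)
\emph{Overall strategy.} The plan is to reduce to a complex manifold by resolving the singularity and then to apply weighted $L^2$-estimates, in the spirit of the proof of Lemma \ref{lem:dolb1}, while carrying along the gain of regularity measured by the weight $\|z\|^{-2}(-\log\|z\|^2)^{-2}$ and working in the degree range $p+q\ge n+2$ rather than the top degree $p=n$. So I would fix a resolution $\pi\colon\widetilde X_r\to X_r$ with $E=\pi^{-1}(0)$ a normal crossings divisor; then $\widetilde X_r$ is a strongly pseudoconvex manifold with smooth boundary $\pi^{-1}(bB_r(0)\cap X)$, and $\widetilde f:=\pi^*f$ is a $\dq$-closed $(p,q)$-form on $\widetilde X_r\setminus E$ which is $L^2$ for the degenerate metric $\gamma=\pi^*h$. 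Everything then comes down to solving $\dq\widetilde u=\widetilde f$ on $\widetilde X_r\setminus E$ with a bound which, after pushing $\widetilde u$ forward by $\pi$ and unwinding the relations \eqref{eq:dV2}--\eqref{eq:gamma-norm} between $dV_\gamma$, the smooth volume $dV_\sigma$ and the complex Jacobian of $\pi$, descends to the asserted weighted estimate on $X_{r_0}^*$; for $p=n$ this sharpens Lemma \ref{lem:dolb1}.

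\emph{Solving on the resolution.} By \eqref{eq:gamma-norm} an $L^2_\gamma$-form is, off $E$, an ordinary form whose coefficients may blow up along $E$ at a rate controlled by a power of a defining section $s_E$ of $E$. I would therefore apply Hörmander's $L^2$-existence theorem (\cite{Hoe2}) on $\widetilde X_r$ with a plurisubharmonic weight $\varphi$ adapted to $E$: a large enough multiple of $\log|s_E|^2$ to make $\widetilde f$ genuinely square-integrable for a smooth metric against $e^{-\varphi}$, together with a bounded strictly plurisubharmonic term and an Ohsawa--Donnelly--Fefferman logarithmic twist of the type $-\log(-\log|s_E|^2)$ — the same twist appearing in the complete metrics $\omega_k$ of Lemma \ref{lem:complete} — whose role is exactly to generate the extra factor $(-\log\|z\|^2)^{-2}$ in the final estimate. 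Since $q\ge1$ and $\widetilde X_r$ is strongly pseudoconvex, this produces $\widetilde u$ with $\dq\widetilde u=\widetilde f$ and a bound on $\int_{\widetilde X_r}e^{-\varphi}|\widetilde u|^2$; dividing out the power of $s_E$ and pushing down gives the stated weighted inequality. (Equivalently one may solve with the degenerating family of complete Kähler metrics $\omega_k$ and the Donnelly--Fefferman estimate Lemma \ref{lem:df}, valid here because $p+q\ne n$, and pass to a weak limit as in Lemma \ref{lem:vanishing}; the two routes encode the same analysis.)

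\emph{The main obstacle.} The crux is making these two demands on the power of $s_E$ compatible: it must be large enough for $s_E^N\widetilde f$ to be $L^2_\sigma$, yet small enough that $s_E^{-N}$ times Hörmander's solution is still square-integrable against the \emph{logarithmic} weight (it will typically fail to be so without the logarithmic factor). For $(n,q)$-forms this bookkeeping is governed by the monotonicity of $|\cdot|^2\,dV$ in the metric that is peculiar to top holomorphic degree — the one behind Lemma \ref{lem:vanishing} — and is comparatively soft; for $p<n$ that monotonicity is gone, and one must instead exploit that along $E$ the degeneration of $\gamma=\pi^*h$ against a smooth metric is purely algebraic (powers of the $z_j$, cf. \eqref{eq:gmetric1}--\eqref{eq:gamma-norm}), and it is precisely here that the hypothesis $p+q\ge n+2$ is consumed. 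This balancing is the substance of the work of Forn{\ae}ss--{\O}vrelid--Vassiliadou \cite{FOV1} — morally the Serre dual of the compactly supported low-degree solvability already used in Lemma \ref{lem:dolb2} — and I would reproduce their estimates here rather than reinvent them; I expect this to be the hard part.

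\emph{From $\dq_w$ to $\dq_s$.} Finally, to upgrade $\dq_w u=f$ to $\dq_s u=f$ I would rerun the cut-off procedure of Lemma \ref{lem:gospel1}: multiply $u$ by $\mu_k:=\rho_k(\log(-\log r(\|z\|)))$, which vanishes near $0$, and mollify, obtaining smooth forms $u_k$ with compact support away from the origin such that $u_k\to u$ and $\dq u_k\to f$ in $L^2_{p,*}(X_{r_0}^*)$. The only convergence that is not immediate, $\dq\mu_k\wedge u\to 0$, follows because $|\dq\mu_k|\lesssim\|z\|^{-1}(-\log\|z\|^2)^{-1}$ is supported on the shrinking annulus $\{e^{-e^{k+1}}\le\|z\|\le e^{-e^k}\}$, so the weighted estimate on $u$ makes $\dq\mu_k\wedge u$ uniformly bounded in $L^2$ there and hence convergent to $0$ as the domain of integration collapses. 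This places $u$ in $\Dom\dq_s$ with $\dq_s u=f$ and completes the proof.
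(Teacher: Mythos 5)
Your proposal is correct and follows essentially the same route as the paper: the weighted estimate is taken from Forn{\ae}ss--{\O}vrelid--Vassiliadou (though the relevant reference is Theorem 1.2 of \cite{FOV2}, the isolated-singularities paper, rather than \cite{FOV1}), and the upgrade from $\dq_w u=f$ to $\dq_s u=f$ is the Pardon--Stern cut-off of Lemma \ref{lem:gospel1}, exactly as in the text. Your additional sketch of how the FOV estimate is obtained is consistent with their method, but the paper does not reproduce it and simply cites the result.
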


\begin{proof}
The first statement is just Theorem 1.2 in \cite{FOV2}.
The second statement, $\dq_{s,w} u=f$, follows as the last statement of Theorem 1.1 in \cite{FOV2},
or by the cut-off procedure of our Lemma \ref{lem:fix1} below. Forn{\ae}ss-{\O}vrelid-Vassiliadou use the same cut-off
procedure for the last statement of their Theorem 1.1.
\end{proof}

Combining Lemma \ref{lem:dolb1} with Lemma \ref{lem:dolb2} and Lemma \ref{lem:fov}, we obtain:

\begin{lem}\label{lem:dolb3}
Let $c>0$ small enough. Then:
$$H^{n,n-q}_{s,w}(X_c^*) \cong H^{0,q}_{w,s}(X_c^*) = 0\ ,\ 0\leq q\leq n-1.$$
\end{lem}

\begin{proof}
We distinguish two overlapping cases. First, let $1\leq q\leq n-1$
and $f\in L^{0,q}(X_c^*)$ such that $\dq_{w,s} f=0$.
Let $f^0$ be the trivial extension of $f$ to $X^*$.
Then $\dq_w f^0=0$ on $X^*$ because $f$ can be approximated in the graph norm
by a sequence in $L^{0,q}_b(X_c^*)$ and this sequence approximates $f^0$ as well.

Choose $r>r_0>c>0$ adequately,
and let $u$ be the solution to $\dq_w u = f^0$
from Lemma \ref{lem:dolb2}. Now then, let $\chi\in C^\infty(X)$
be a cut-off function such that $\chi\equiv 1$ in a neighborhood of the origin
and $\chi\equiv 0$ outside $X\cap B_{c/2}(0)$.
Consider
$$f' := f - \dq_w( \chi u),$$
which is $\dq_w$-closed, vanishes in a neighborhood of the origin
and equals $f$ outside the ball $B_{c/2}(0)$, especially close to the
boundary $X\cap bB_c(0)$. But then $\dq_{min} f'=0$
for the $\dq_{min}$ operator on $X_c^*$.
By Lemma \ref{lem:dolb1}, there exists $g\in L^{0,q-1}(X_c^*)$
such that $\dq_{min} g = f'$. It is then clear that also
$\dq_{w,s} g = f'$.
On the other hand, $\chi u$ is identically zero outside $B_{c/2}(0)$,
yielding
$$\dq_{w,s} (\chi u) = \dq_w (\chi u)$$
on $X_c^*$. Hence $g':= g + \chi u$
is the desired solution to the $\dq_{w,s}$-equation:
$$\dq_{w,s} g' = \dq_{w,s} g + \dq_{w,s}(\chi u) = f' + \dq_w(\chi u) = f.$$
Thus $H^{0,q}_{w,s}(X_c^*)=0$ for $1\leq q\leq n-1$ and $H^{n,n-q}_{s,w}(X_c^*)=0$ by duality (see \eqref{eq:sw}).

The missing case $q=0$ could be obtained by using a resolution of singularities to show
that a $\dq_{w,s}$-holomorphic function corresponds to a vector-valued holomorphic function with compact support on the resolution,
so it must vanish identically. We choose another approach.

For the second case, let $0\leq q \leq n-2$.
Then Lemma \ref{lem:fov} yields by use of $\Dom \dq_{s,w} \subset \Dom \dq_w$
that $H^{n,n-q}_{s,w}(X_c^*)=0$. Duality gives $H^{0,q}_{w,s}(X_c^*)=0$.
\end{proof}

\medskip
\section{A new canonical sheaf and its $L^2$-resolution}\label{sec:psi}

\subsection{The strong $\dq$-operator $\dq_s$ and its $L^2$-complex}\label{ssec:ds}

We introduce now a suitable local realization of a minimal version of the $\dq$-operator.
This is the $\dq$-operator with a Dirichlet boundary condition at the singular set $\Sing X$ of $X$.
As in Section \ref{sssec:dqw},
let $X$ be a (singular) Hermitian complex space of pure dimension $n$,
$H\rightarrow X^*=X\setminus \Sing X$ a Hermitian holomorphic line bundle, $U\subset X$ an open subset.
Let
$$\dq_s(U): \mathcal{L}^{p,q}(U,H) \rightarrow \mathcal{L}^{p,q+1}(U,H)$$
be defined as follows. We say that $f\in\Dom\dq_w(U)$ is in the domain of $\dq_s(U)$ if there exists a
sequence of forms $\{f_j\}_j \subset \Dom\dq_w(U)=\mathcal{C}^{p,q}(U,H) \subset \mathcal{L}^{p,q}(U,H)$ with essential support away from the singular set,
$\supp f_j \cap \Sing X = \emptyset$, such that
\begin{eqnarray}\label{eq:ds1}
f_j \rightarrow f &\mbox{ in }& L^{p,q}(K\setminus\Sing X,H),\\
\dq_w f_j \rightarrow \dq_w f &\mbox{ in }& L^{p,q+1}(K\setminus\Sing X,H)\label{eq:ds2}
\end{eqnarray}
for each compact subset $K\subset\subset U$. 
Again, we write simply $\dq_s$ for $\dq_s(U)$ 
if there is no danger of confusion.
The subscript refers to $\dq_s$ as an extension in a strong sense.

Note that we can assume without loss of generality
(by use of cut-off functions and smoothing with Dirac sequences) that the forms $f_j$ are smooth with compact support in $U\setminus\Sing X$.
In particular, if $X$ is compact, then 
\begin{eqnarray}\label{eq:smin}
\dq_s(X)=\dq_{min}: L^2_{p,q}(X^*,H) \rightarrow L^2_{p,q+1}(X^*,H).
\end{eqnarray}
It is now clear that $\dq_s(U)|_W = \dq_s(W)$
for open sets $W\subset U$, and we can define the presheaves of germs of forms in the domain of $\dq_s$,
$$\mathcal{F}^{p,q}(H):=\mathcal{L}^{p,q}(H)\cap \dq_s^{-1}\mathcal{L}^{p,q+1}(H),$$
given by $\mathcal{F}^{p,q}(U,H) = \mathcal{L}^{p,q}(U,H) \cap \Dom \dq_s(U)$.

Here, we shall check a bit more carefully that these are already sheaves:
Let $U=\bigcup U_\mu$ 
be a union of open sets, $f\in \mathcal{L}^{p,q}(U,H)$ and $f_\mu=f|_{U_\mu} \in \Dom\dq_s(U_\mu)$
for all $\mu$. We claim that $f\in\Dom\dq_s(U)$.
To see this, we can assume (by taking a refinement if necessary)
that the open cover $\mathcal{U}:=\{U_\mu\}$ is locally finite,
and choose a partition of unity $\{\varphi_\mu\}$ for $\mathcal{U}$.
On $U_\mu$ choose a sequence $\{f^\mu_j\}\subset \mathcal{L}^{p,q}(U_\mu,H)$ as in \eqref{eq:ds1}, \eqref{eq:ds2},
and consider
$$f_j := \sum_{\mu} \varphi_\mu f_j^\mu.$$
It is clear that $\{f_j\}\subset \mathcal{L}^{p,q}(U,H)$. If $K\subset\subset U$ is compact, then $K\cap \supp \varphi_\mu$
is a compact subset of $U_\mu$ for each $\mu$, so that $\{f_j^\mu\}$ and $\{\dq f_j^\mu\}$ converge in the $L^2$-sense
to $f_\mu$ resp. $\dq_w f_\mu$ on $K\cap\supp \varphi_\mu$. But then $\{f_j\}$ and $\{\dq f_j\}$ converge in the $L^2$-sense
to $f$ resp. $\dq_w f$ on $K$ (recall that the cover is locally finite) and that is what we had to show.

As for $\mathcal{C}^{p,q}(H)$, it is clear that the sheaves $\mathcal{F}^{p,q}(H)$ are fine,
and we obtain fine sequences
\begin{eqnarray}\label{eq:Fseq1}
\mathcal{F}^{p,0}(H) \overset{\dq_s}{\longrightarrow} \mathcal{F}^{p,1}(H) 
\overset{\dq_s}{\longrightarrow} \mathcal{F}^{p,2}(H) \overset{\dq_s}{\longrightarrow} ...
\end{eqnarray}
We can now introduce the sheaf
\begin{eqnarray}\label{eq:KXs}
\mathcal{K}^s_X(H) := \ker \dq_s \subset \mathcal{F}^{n,0}(H)
\end{eqnarray}
which we may call the {\it canonical sheaf of holomorphic $H$-valued $n$-forms with Dirichlet boundary condition}.
The main objective of this section is to compare different representations of the cohomology of $\mathcal{K}^s_X$.
One of them will be the $L^{2,loc}$-Dolbeault cohomology with respect to the $\dq_s$-operator on open sets $U\subset X$,
i.e. the cohomology of the complex \eqref{eq:Fseq1} which is denoted by $H^q(\Gamma(U,\mathcal{F}^{p,*}))$.

\subsection{Proof of Theorem \ref{thm:exactness2} -- $L^2$-resolution for $\mathcal{K}_X^s$}

We assume from now on that $X$ has only isolated singularities and that $L$ is a Hermitian
holomorphic line bundle over $X$, not just over $X^*$.

Then the complex
\begin{eqnarray}\label{eq:complexsL}
0\rightarrow \mathcal{K}_X^s(L) \hookrightarrow \mathcal{F}^{n,0}(L) \overset{\dq_s}{\longrightarrow}
\mathcal{F}^{n,1}(L) \overset{\dq_s}{\longrightarrow} \mathcal{F}^{n,2}(L) \overset{\dq_s}{\longrightarrow} ... \longrightarrow \mathcal{F}^{n,n}(L)
\rightarrow 0
\end{eqnarray}
is exact by use of Lemma \ref{lem:dolb3} which says that $H^{n,\mu}_{s,w}(B^*)=0$ for all $\mu\geq 1$
if $B$ is a very small ball centered at an isolated singularity.
It is clear that this statement does not depend on the line bundle $L\rightarrow X$.
We could prove exactness of \eqref{eq:complexsL} also for a Hermitian line bundle over $X^*$
which is locally semi-positive with respect to $X$, but that would require a full review of 
the methods of \cite{FOV2}.

It is clear that the sheaves $\mathcal{F}^{p,q}(L)$ are fine,
and so the proof of Theorem \ref{thm:exactness2} is complete.

\subsection{Proof of Theorem \ref{thm:Ks} -- Desingularization of $\mathcal{K}_X^s$}

Let $(X,h)$ be a Hermitian complex space with only isolated singularities.

Let us outline the proof of Theorem \ref{thm:Ks}.
We will first use results from \cite{OvVa4} to show that $\mathcal{K}_X^s$ is a torsion-free coherent analytic sheaf.
We can then use a Theorem of Rossi  \cite{Ro} and Hironaka's desingularization to obtain a resolution of singularities
(with only normal crossings)
$$\pi: M \rightarrow X$$
such that $\pi^* \mathcal{K}_X^s / \mathcal{T}(\pi^* \mathcal{K}_X^s)$ is a locally free subsheaf of $\mathcal{K}_M$.
Here, $\mathcal{T}(\pi^* \mathcal{K}_X^s)$ denotes the torsion sheaf of $\pi^* \mathcal{K}_X^s$.

It follows from Theorem \ref{thm:ac3} and Lemma \ref{lem:ac4} that there exists an effective divisor $D$ with only normal crossings
(with support on the exceptional set of the resolution) such that
\begin{eqnarray}\label{eq:gsp001}
\mathcal{K}_X^s = \pi_* \big( \mathcal{K}_M\otimes \OO_M(-D) \big).
\end{eqnarray}
We will then show that $D \geq Z-|Z|$ where 
$Z=\pi^{-1}(\Sing X)$ is the unreduced exceptional divisor and $|Z|$ the underlying reduced divisor.

\bigskip
If $n=\dim X=2$ or -- more general -- if the exceptional set of the resolution has only double self-intersections,
we can prove that \eqref{eq:gsp001} holds with $D=Z-|Z|$.

\newpage

\subsubsection{$\mathcal{K}_X^s$ is a coherent analytic sheaf}\label{sssec:fix1}

It is clear that $\mathcal{K}_X^s$ is an analytic subsheaf of the Grauert-Riemenschneider canonical sheaf $\mathcal{K}_X$.
Thus $\mathcal{K}_X^s$ is of relation finite type, but it is a bit more involved to see that it is finitely generated.

Consider the short exact sequence
\begin{eqnarray*}\label{eq:fix01}
0\rightarrow \mathcal{K}_X^s \longrightarrow \mathcal{K}_X \longrightarrow \mathcal{K}_X / \mathcal{K}_X^s \rightarrow 0.
\end{eqnarray*}
We will show that $\mathcal{K}_X/\mathcal{K}_X^s$ is coherent. Then it follows that $\mathcal{K}_X^s$ is coherent, too,
as it is well-known that $\mathcal{K}_X$ is coherent.

But $\mathcal{K}_X/\mathcal{K}_X^s$ is a skyscraper sheaf with support in the isolated singularities of $X$.
It is enough to consider an isolated singularity $x_0\in\Sing X$. We can assume that $X$ is embedded locally at $x_0$
into a complex number space $\C^N$ such that $x_0=0\in\C^N$. For $r>0$, let $B_r=\{z\in\C^N: \|z\|<r\}$,
$X_r=X\cap B_r$ and $X_r'=X\cap B_r - \{0\}$. Fix $r>0$ so small that $X_r$ contains no singularity besides the origin
and $X_r$ has a strictly pseudoconvex boundary in $\Reg X$.

We will show that
$$\big( \mathcal{K}_X / \mathcal{K}_X^s\big)_0 = (\mathcal{K}_{X})_0 / (\mathcal{K}_{X}^s)_0$$
is a $\C$-vector space of finite dimension (and then $\mathcal{K}_X/\mathcal{K}_X^s$ is clearly coherent).
For this, we define a linear map $\Psi: (\mathcal{K}_X)_0 \rightarrow \C^L$ such that $\ker \Psi=(\mathcal{K}_X^s)_0$.

It is known that $H^{0,n-1}_{max}(X_r')$ is of finite dimension (see \cite{OvVa4}, Section 2.3). 
We consider instead the finite-dimensional quotient space
$$ V:= \frac{H^{0,n-1}_{max}(X_r')}{\phi_*(H^{0,n-1}_{max}(\widetilde{X}_r))}$$
from the remark after Proposition 4.5 in \cite{OvVa4}. Let $f_1, ..., f_L$ be a basis of $V$.
For a germ $\omega\in (\mathcal{K}_X)_0$, we define for $j=1, ..., L$ the map
\begin{eqnarray}\label{eq:fix02}
\Psi_j (\omega) := \int_{X_r'} f_j \wedge \dq \chi\wedge \omega,
\end{eqnarray}
where the cut-off function $\chi$ is chosen as in \cite{OvVa4}, (12), i.e. $\chi$ is a smooth non-negative cut-off function with compact
support in $X_r$ that is identically $1$ in a neighborhood of the origin.
The support of $\chi$ has to be taken so small
that the integral makes sense (according to the domain where $\omega$ is defined).
This definition makes sense as the integral \eqref{eq:fix02} does not depend on the particular choice of
$\chi$ (as one sees easily by partial integration).

That defines a $\C$-linear map
$$\Psi=(\Psi_1, ..., \Psi_L): (\mathcal{K}_X)_0 \rightarrow \C^L,$$
and $(\mathcal{K}_X^s)_0 = \ker \Psi$ by the remark to Proposition 4.5 in \cite{OvVa4}.

\subsubsection{The monoidal transformation of $\mathcal{K}_X^s$}\label{sssec:fix2}

It is now clear that $\mathcal{K}_X^s$ is a torsion-free coherent analytic sheaf, which is generically locally free of rank one
(i.e. on the regular part of $X$). We wish to make $\mathcal{K}_X^s$ locally free.

By a Theorem of Rossi \cite{Ro} (see \cite{Rie}, Theorem 2, for a suitable reference), there exists a proper modification
$$\sigma: X' \rightarrow X$$
such that 
$$\sigma^T \mathcal{K}_X^s := \sigma^* \mathcal{K}_X^s / \mathcal{T}(\sigma^* \mathcal{K}_X^s)$$ 
is locally free of rank one.
Here, $\mathcal{T}(\sigma^* \mathcal{K}_X^s)$ is the torsion sheaf of $\sigma^* \mathcal{K}_X^s$,
and the torsion-free analytic preimage $\sigma^T \mathcal{K}_X^s$ is called the monoidal transformation of $\mathcal{K}_X^s$.

Now let
$\rho: M \rightarrow X'$
be a resolution of singularities with only normal crossings.
Then $\rho^* \sigma^T \mathcal{K}_X^s = \rho^T \sigma^T \mathcal{K}_X^s$ is again locally free of rank one.
Let from now on
$$\pi: M \rightarrow X$$
be the resolution of singularities $\sigma\circ\rho$ which has an exceptional divisor with normal crossings only.
Then
\begin{eqnarray*}
\pi^T \mathcal{K}_X^s := \pi^* \mathcal{K}_X^s / \mathcal{T}(\pi^* \mathcal{K}_X^s)
\end{eqnarray*}
is locally free of rank one because $\pi^T = \rho^T \sigma^T = \rho^* \sigma^T$ by the Korollar in \cite{GrRie}, §1.3.
As there is a natural injection $\pi^T \mathcal{K}_X^s \hookrightarrow \mathcal{K}_M$,
it follows from Lemma \ref{lem:ac4}
that there exists an effective divisor, $D\geq 0$, with support on the exceptional set such that
$$\pi^T \mathcal{K}_X^s= \mathcal{K}_M \otimes\OO(-D).$$
Finally, Theorem \ref{thm:ac3} yields:
\begin{eqnarray}\label{eq:gsp002}
\mathcal{K}_X^s = \pi_* \pi^T \mathcal{K}_X^s = \pi_* \big( \mathcal{K}_M\otimes \OO_M(-D) \big).
\end{eqnarray}
In the following, we will study the divisor $D$.

\medskip
\subsubsection{The $\dq_{s,E}$-operator on the resolution}\label{sssec:fix3}

Let $\pi: M\rightarrow X$ be a resolution of singularities with only normal crossings as above.
As in Section \ref{sssec:wresolution}, let $\gamma=\pi^* h$ be the positive semi-definite pseudometric
and give $M$ a freely chosen positive definite metric $\sigma$.

On $M$, we denote by $\dq_{s,E}$ the $\dq$-operator acting on $\mathcal{L}^{p,q}_\gamma$-forms,
defined as the $\dq_s$-operator on $X$ above, but with the exceptional set $E$ in place of the singular set $\Sing X$
(so that $\pi_* \dq_{s,E}=\dq_s$). Let
$$\mathcal{F}^{p,q}_{\gamma,E} := \mathcal{L}^{p,q}_\gamma \cap \dq_{s,E}^{-1} \mathcal{L}^{p,q+1}_\gamma.$$
Then it follows from \eqref{eq:l2est5} that $(\mathcal{F}^{p,*},\dq_s)$ can be canonically
identified with the direct image complex $(\pi_* \mathcal{F}^{p,*}_{\gamma,E},\pi_* \dq_{s,E})$.
We will show that
\begin{eqnarray}\label{eq:Ks}
\mathcal{F}^{n,0}_{\gamma,E} \cap \ker \dq_{s,E} \subset \mathcal{K}_M \otimes \OO(|Z|-Z)
\end{eqnarray}
for $X$ with only isolated singularities (Lemma \ref{lem:inclusion}).
Recall that $Z=\pi^{-1}(\Sing X)$ is the unreduced exceptional divisor and $|Z|$ the underlying reduced divisor.
On the other hand, we will show that the inverse inclusion holds in \eqref{eq:Ks} if
the resolution is chosen appropriately and
$n=\dim X=2$ or the exceptional set of the resolution has only double self-intersections (Lemma \ref{lem:fix1}).

\medskip
Let $U\subset M$ be an open set. We can assume
that $U$ is an open set in $\C^n$ and that the exceptional set $E$ is just the normal crossing $\{z_1\cdots z_d=0\}$.
We can assume further that $U\subset \pi^{-1}(U')$, where $U'$ is an open neighborhood of an isolated singularity
$p\in\Sing X$, and that $U'$ is embedded holomorphically in $\C^N$ such that $p=0\in\C^N$.
Let $w_1, ..., w_N$ be the Euclidean holomorphic coordinates of $\C^N$.
The unreduced exceptional divisor $Z=\pi^{-1}(\{0\})$ is given as the common zero set
of the holomorphic functions $\{\pi^* w_1, ..., \pi^*w_N\}$.
Let $Z$ have the order $k_j\geq 1$ on $\{z_j=0\}$, i.e. assume that $Z$ is given by
the holomorphic function $f=z_1^{k_1}\cdots z_d^{k_d}$. Let $k_{d+1}=...=k_n=0$ for ease of notation.
Let
$$F:= \left( \sum_{j=1}^N |w_j|^2\right)^{1/2}.$$
We need to understand $\pi^* F$. For $j=1, ..., N$, we have that
\begin{eqnarray*}
\pi^* w_j = z_1^{k_1(j)} \cdots z_n^{k_n(j)} \cdot H_j,
\end{eqnarray*}
where $H_j$ is a non-vanishing holomorphic function and $k_\mu(j)\geq k_\mu = \min_j\{k_\mu(j)\}$ 
for all $\mu=1, ..., n$. It follows that
\begin{eqnarray*}
(\pi^* F)^2 &=& \sum_{j=1}^N |\pi^* w_j|^2 \\
&=& |z_1^{k_1}\cdots z_d^{k_d}|^2 
\left( \sum_{j=1}^N |z_1^{k_1(j)-k_1}\cdots z_n^{k_n(j)-k_n}|^2 \right) \wt{H},
\end{eqnarray*}
where $\wt{H}$ is a positive smooth function. 
At this place we have to assume that the resolution of singularities $\pi: M\rightarrow X$ is chosen appropriately.
By use of Hironaka's resolution of singularities we can assume that $(\pi^* F)^2$ is already monomial in the sense that
\begin{eqnarray}\label{eq:gsp003}
(\pi^* F)^2 = \sum_{j=1}^N |\pi^* w_j|^2  = |z_1^{k_1}\cdots z_d^{k_d}|^2 \widehat{H},
\end{eqnarray}
where $\widehat{H}$ is a positive smooth function. This is achieved by resolving also the sheaf of ideals
$\mathcal{I}=(\pi^* w_1, ..., \pi^* w_N)$, i.e. making the strict transform of $\mathcal{I}$ locally free.

\begin{lem}\label{lem:fix1}
Assume that $d\leq 2$ in the situation above. Then:
\begin{eqnarray}\label{eq:fix05}
\left.\mathcal{K}_M\otimes \OO(|Z|-Z)\right|_U \subset \left.\mathcal{F}^{n,0}_{\gamma,E}\right|_U.
\end{eqnarray}
\end{lem}

\begin{proof}
For the proof, we use a standard cut-off procedure. Let $U$ be as above and let $\phi\in \Gamma(U,\mathcal{K}_M\otimes \OO(|Z|-Z))$. 
Then $\phi\in \mathcal{L}^{n,0}_\gamma(U)$, and we have to show that $\phi\in\Dom\dq_{s,E}$.
The statement is local, so it is enough to prove that $\chi \phi \in\Dom\dq_{min}(U\setminus E)$ in $L^{n,0}_\gamma(U\setminus E)$,
where $\chi\in C^\infty_{cpt}(U)$ is a smooth cut-off function which is identically $1$ on an arbitrary large subset of $U$.\footnote{
Recall Section \ref{ssec:l2serre} for the $\dq_{min}$-operator which we consider on the Hermitian manifold $(U\setminus E,\gamma)$.}
Let $\h{\phi}:=\chi\phi$ and $K\subset U$ the support of $\chi$ which is compact in $U$ (but not in $U\setminus E$).

As in \cite{PS1}, Lemma 3.6, let $\rho_k: \R\rightarrow [0,1]$, $k\geq 1$, be smooth cut-off functions
satisfying 
$$\rho_k(x)=\left\{\begin{array}{ll}
1 &,\  x\leq k,\\
0 &,\  x\geq k+1,
\end{array}\right.$$
and $|\rho_k'|\leq 2$. Moreover, let $r: \R\rightarrow [0,1/2]$ be a smooth increasing function such that
$$r(x)=\left\{\begin{array}{ll}
x &,\ x\leq 1/4,\\
1/2 &,\ x\geq 3/4,
\end{array}\right.$$
and $|r'|\leq 1$.
We do need a function measuring the distance to the exceptional set $E$ in $M$.
A good choice is just the pull-back of the Euclidean distance in $\C^N$,
i.e. the function $\pi^*F$ from above.
Since the metric $h$ is quasi-isometric to the Euclidean metric in $\C^N$, we have $|\dq F|_h\lesssim 1$.
As cut-off functions we can use 
$$\mu_k:=\rho_k(\log(-\log r(\pi^* F)))$$ 
on $M$. Thus, we claim that
$$\phi_k:= \mu_k\h{\phi}$$
is a suitable sequence of smooth forms with support away from $E$.
It is clear that $\phi_k\rightarrow\h{\phi}$ in $L^{n,0}_\gamma(U\setminus E)$
and that $\mu_k \dq\h{\phi} \rightarrow \dq\h{\phi}$ in $L^{n,1}_\gamma(U\setminus E)$ as $k\rightarrow\infty$.
We will now show that
\begin{eqnarray}\label{eq:co1}
\dq\mu_k \wedge\h{\phi} \rightarrow 0
\end{eqnarray}
weakly in $L^{n,1}_\gamma(U\setminus E)$ as $k\rightarrow\infty$.

By definition,
\begin{eqnarray}
|\dq \mu_k|_\gamma^2 &\leq& \frac{|\rho_k'(\log(-\log(r(\pi^*F))))|^2}{r^2(\pi^*F)\log^2(r(\pi^*F))} |r'|^2 |\dq\pi^*F|_\gamma^2\\
&\lesssim& \frac{\chi_k(\pi^*F)}{(\pi^*F)^2\log^2(\pi^*F)},\label{eq:co2}
\end{eqnarray}
where $\chi_k$ is the characteristic function of $[e^{-e^{k+1}},e^{-e^k}]$ as $|\pi^*\dq F|_\gamma=|\dq F|_h\lesssim 1$ and 
$\mu_k$ is constant outside $[e^{-e^{k+1}},e^{-e^k}]$.

\bigskip
By the assumption $d\leq 2$, it follows from \eqref{eq:gsp003} that
$\pi^*F \sim |z_1^{k_1} z_2^{k_2}|$, and \eqref{eq:co2} yields
\begin{eqnarray}\label{eq:co3}
|\dq\mu_k|_\gamma \lesssim \chi_k(\pi^* F) |z_1|^{-k_1} |z_2|^{-k_2} |\log|^{-1}\big( |z_1 z_2|\big).
\end{eqnarray}
The assumption $\phi\in \Gamma(U,\mathcal{K}_M\otimes \OO(|Z|-Z))$ implies that
$$z_1^{1-k_1} z_2^{1-k_2} \phi \in \Gamma(U,\mathcal{K}_M)$$
which does almost compensate the right hand-side of \eqref{eq:co3}. 
We have to take care of the additional factor $\lambda:=|z_1 z_2||\log||z_1 z_2|$.
But $z_1^{1-k_1} z_2^{1-k_2} \phi$ has smooth coefficients (i.e. bounded on $K=\supp\chi$),
and for an $(n,0)$-form the $\gamma$-norm equals the $\sigma$-norm.
So, $|\dq\mu_k \wedge \h{\phi}|_\gamma = |\dq\mu_k|_\gamma|\h{\phi}|_\gamma \lesssim \chi_k(\pi^* F) \lambda^{-1}$,
and it is enough to show that 
\begin{eqnarray}\label{eq:gsp004}
\int_K \frac{\chi_k(|z_1^{k_1} z_2^{k_2}|)}{|z_1 z_2|^2 |\log|^2 |z_1 z_2|}
\end{eqnarray}
is uniformly bounded in $k$.
Combining this with \eqref{eq:co3}, we see that $\dq\mu_k\wedge\h{\phi}$ is uniformly bounded in $L^{n,1}_\gamma(U\setminus E)$,
and so $\dq\mu_k\wedge\h{\phi} \rightarrow 0$ weakly in $L^{n,1}_\gamma(U\setminus E)$
as desired because the domain of integration vanishes as $k\rightarrow\infty$.

The fact that \eqref{eq:gsp004} is uniformly bounded is shown in an appendix, Section \ref{sec:appB}.

\medskip
We can now conclude that actually $\h{\phi}\in\Dom\dq_{min}(U\setminus E)$.
To see this, recall that $\dq_{min}^* = \theta_{max}$ in the $L^2_\gamma$-sense on $U\setminus E$ (see \eqref{eq:adjoint2}).
But the considerations above yield
\begin{eqnarray*}
(\h{\phi},\theta_{max} g)_{U\setminus E} &=& \lim_{k\rightarrow \infty} (\phi_k, \theta_{max} g)_{U\setminus E}
= \lim_{k\rightarrow\infty} (\dq\phi_k, g)_{U\setminus E}\\
&=& (\dq \h{\phi},g)_{U\setminus E} + \lim_{k\rightarrow \infty} (\dq\mu_k\wedge \h{\phi}, g)_{U\setminus E}
= (\dq \h{\phi},g)_{U\setminus E}
\end{eqnarray*}
for all $g\in\Dom\theta_{max} \subset L^{n,1}_\gamma(U\setminus E)$.
For the partial integration, we have used the fact that the $\phi_k$ have compact support in $U\setminus E$.
So, $\h{\phi}\in\Dom\dq_{min}(U\setminus E)$ as desired.
\end{proof}

Let us now prove \eqref{eq:Ks}. This inclusion holds for arbitrary exceptional divisor (with only normal crossings).
We show a bit more which will be of use later in the proof of Theorem \ref{thm:B}.
Recall that $L_{|Z|-Z}\rightarrow M$ is a Hermitian holomorphic line bundle such that holomorphic sections of $L_{|Z|-Z}$
correspond to sections in $\OO(|Z|-Z)$. Hence
$$\mathcal{C}^{n,0}_\sigma(L_{|Z|-Z})\cap \ker \dq_w = \mathcal{K}_M\otimes\OO(|Z|-Z),$$
and so \eqref{eq:Ks} follows from the following:

\begin{lem}\label{lem:inclusion}
For all $p\geq 0$, we have
$$\mathcal{F}^{n,p}_{\gamma,E} \subset \mathcal{C}^{n,p}_\sigma(L_{|Z|-Z})$$
as subsheaves of $\mathcal{L}^{n,p}_\sigma$. It follows that
$$\mathcal{F}^{n,0}_{\gamma,E}\cap \ker \dq_{s,E} \subset \mathcal{K}_M\otimes \OO(|Z|-Z),$$
and
$$\mathcal{F}^{n,p} \cong \pi_* \mathcal{F}^{n,p}_{\gamma,E} \subset \pi_* \mathcal{C}^{n,p}_\sigma(L_{|Z|-Z}).$$
\end{lem}

\begin{proof}
We observe that $\mathcal{F}^{n,p}_{\gamma,E} \subset \mathcal{L}^{n,p}_\gamma \subset \mathcal{L}^{n,p}_\sigma$
by definition of $\mathcal{F}^{n,p}_{\gamma,E}$ and \eqref{eq:l2est3}.
On the other hand, there is a natural inclusion
$\mathcal{L}^{n,p}_\sigma(L_{|Z|-Z}) \subset \mathcal{L}^{n,p}_\sigma$
since $Z-|Z|$ is an effective divisor so that
$\mathcal{C}^{n,p}_\sigma(L_{|Z|-Z}) \subset \mathcal{L}^{n,p}_\sigma(L_{|Z|-Z}) \subset \mathcal{L}^{n,p}_\sigma$
by definition of $\mathcal{C}^{n,p}_\sigma(L_{|Z|-Z})$.

As the statement is local, it is enough to consider a point $P\in E$ and a neighborhood $U$ of $P$ such that
$U$ is an open set in $\C^n$, that $E$ is the normal crossing $\{z_1\cdots z_d=0\}$,
and $P=0$. For $\sigma$ we can take the Euclidean metric.

Let us investigate the behavior of $(0,1)$-forms under the resolution $\pi: M\rightarrow X$
at the isolated singularity $\pi(P)$. We can assume that a neighborhood of $\pi(P)$ is embedded
holomorphically into $W\subset\subset\C^L$, $L\gg n$, such that $\pi(P)=0$,
and that $\gamma=\pi^* h$ where $h$ is the Euclidean metric in $\C^L$.
Let $w_1, ..., w_L$ be the Cartesian coordinates of $\C^L$.
We are interested in the behavior of the forms $\eta_\mu:=\pi^* d\o{w_\mu}$ at the exceptional set.
Let $dz_N:=dz_1\wedge\cdots \wedge dz_n$. It follows from the observations in Section \ref{sssec:wresolution}
that a form $\alpha$ is in $L^{n,q}_{\gamma}(U)$ exactly if it can be written
in multi-index notation as
\begin{eqnarray}\label{eq:alpha01}
\alpha = \sum_{|K|=q} \alpha_K dz_N\wedge \eta_K = dz_N \wedge \sum_{|K|=q} \alpha_K \eta_K
\end{eqnarray}
with coefficients $\alpha_K \in L^{0,0}_\sigma(U)$. 
That can be seen as follows. Since the forms $\eta_K$ are orthogonal to $dz_N$,
we have
$$|\alpha|_\gamma = |dz_N|_\gamma \big|\sum_{|K|=q} \alpha_K \eta_K\big|_\gamma.$$
Let $g$ be a function as in Section \ref{sssec:wresolution}, i.e. $dV_\gamma=g^2dV_\sigma$.
As $|dz_N|_\gamma = |g|^{-1}$,
there are coefficients $\alpha_K$ in \eqref{eq:alpha01} such that
$$|\alpha|_\gamma^2 = |g|^{-2} \sum_{|K|=q} |\alpha_K|^2.$$
So, $\alpha$ is in $L^{n,q}_{\gamma}(U)$ exactly if $|\alpha|_\gamma$ is in $L^{0,0}_\gamma(U)$
which is the case exactly if all the $g^{-1} \alpha_K$ are in $L^{0,0}_\gamma(U)$.
By use of $dV_\gamma=g^2dV_\sigma$, this is the case exactly if all the $\alpha_K$ are in $L^{0,0}_\sigma(U)$.
The representation \eqref{eq:alpha01} is not unique.

Let $Z$ have the order $k_j\geq 1$ on $\{z_j=0\}$, i.e. assume that $Z$ is given by
$f=z_1^{k_1}\cdots z_d^{k_d}$.
Since $Z=\pi^{-1}(\Sing X)$,
each $\pi^* w_\mu$ must vanish of order $k_j$ on $\{z_j=0\}$.
We conclude that $\pi^* w_\mu$
has a factorization
\begin{eqnarray*}
\pi^* w_\mu = f g_\mu = z_1^{k_1} \cdots z_d^{k_d}\cdot g_\mu,
\end{eqnarray*}
where $g_\mu$ is a holomorphic function on $U$. So,
\begin{eqnarray*}
\eta_\mu = \pi^* d\o{w_\mu} = d\pi^*\o{w_\mu} = \big(\o{z_1}^{k_1-1}\cdots \o{z_d}^{k_d-1}\big) \cdot \beta_\mu,
\end{eqnarray*}
where the $\beta_\mu$ are $(0,1)$-forms that are bounded with respect to the non-singular metric $\sigma$.
This means that $\eta_\mu = \pi^*d\o{w_\mu}$ vanishes at least to the order of $Z-|Z|$ along the exceptional set $E$.

So, \eqref{eq:alpha01} implies that a form $\alpha$ is in $L^{n,q}_{\gamma}(U)$ exactly if it can be written
in multi-index notation as
\begin{eqnarray}\label{eq:alpha02}
\alpha = \big(\o{z_1}^{k_1-1}\cdots \o{z_d}^{k_d-1}\big)^q \sum_{|K|=q} \alpha_K dz_N\wedge \beta_K
\end{eqnarray}
with coefficients $\alpha_K \in L^{0,0}_\sigma(U)$. 

We conclude that
$\mathcal{F}^{n,p}_{\gamma,E} \subset \mathcal{L}^{n,p}_\gamma \subset \mathcal{L}^{n,p}_\sigma(L_{|Z|-Z})$
for all $p\geq 1$, and it remains to treat the case $p=0$.
So, let $\phi \in \mathcal{F}^{n,0}_{\gamma,E}(U)$.
This means that there exists $\psi\in \mathcal{L}^{n,1}_\gamma(U) \subset \mathcal{L}^{n,1}_\sigma(L_{|Z|-Z})(U)$
such that $\dq_{s,E} \phi=\psi$.
But this implies that $\phi \in \mathcal{L}^{n,0}_\sigma(L_{|Z|-Z})$ as we will show now.

The key point is that there exists a sequence of smooth forms $\phi_j$ with support away from $E$
such that 
\begin{eqnarray*}
\phi_j \rightarrow \phi &\mbox{ in }& L^{n,0}_\gamma(V) = L^{n,0}_\sigma(V),\\
\dq \phi_j \rightarrow \psi &\mbox{ in }& L^{n,1}_\gamma(V) \subset L^{n,1}_\sigma(V,L_{|Z|-Z})
\end{eqnarray*}
on suitable open sets $V\subset U$. The considerations above show that convergence in $L^{n,1}_\gamma(V)$
implies convergence in $L^{n,1}_\sigma(V,L_{|Z|-Z})$.

As we treat a local question at $0\in\C^n$, it does no harm to work on a suitable neighborhood of the origin and
to cut-off $\phi$ and the $\phi_j$
by a real-valued smooth function $\chi\in C^\infty_{cpt}(\C)$ with $\chi(z_1)=1$ for $|z_1|\leq \epsilon$,
$\chi(z_1)=0$ for $|z_1|\geq 2\epsilon$, and $|\chi'|\leq 2\epsilon^{-1}$ for a fixed $\epsilon>0$ small enough.
So, replace $\phi(z)$ by $\phi(z)\chi(z_1)$ and $\phi_j(z)$ by $\phi_j(z)\chi(z_1)$.
The new $(n,0)$-forms are not holomorphic any more.

As the $\phi_j$ have compact support away from $E$, we have the representation
\begin{eqnarray}\label{eq:cif}
\phi_j(z) = \frac{z_1^{k_1-1}}{2\pi i} \int_\C \frac{\partial \phi_j}{\partial\o{\zeta_1}}(\zeta_1, z_2, ..., z_n) 
\frac{d\zeta_1\wedge d\o{\zeta_1}}{\zeta_1^{k_1-1}(\zeta_1-z_1)},
\end{eqnarray}
omitting $dz_N$ in the notation for simplicity. But $\dq\phi_j\rightarrow \psi=\dq_w\phi$
in $L^{n,1}_\gamma(V)$ and the representation \eqref{eq:alpha02}
imply that 
$$\zeta_1^{-k_1+1} \dq \phi_j \rightarrow \zeta_1^{-k_1+1} \dq \phi$$
in the $L^2$-sense with respect to the non-singular metric $\sigma$.
But the Cauchy formula \eqref{eq:cif} is bounded as an operator $L^2\rightarrow L^2$.
Hence, the formula \eqref{eq:cif} converges to
$$\phi(z) = \frac{z_1^{k_1-1}}{2\pi i} \int_\C \frac{\partial\phi}{\partial\o{\zeta_1}}(\zeta_1, z_2, ..., z_n) 
\frac{d\zeta_1\wedge d\o{\zeta_1}}{\zeta_1^{k_1-1}(\zeta_1-z_1)},$$
and the integral on the right-hand side is in $L^{n,0}_\sigma$.
Thus, we obtain $z_1^{1-k_1} \phi\in L^{n,0}_\sigma$. Similarly, we have $z_j^{1-k_j}\phi \in L^{n,0}_\sigma$
for $j=2, ..., d$. But $\phi$ is an ordinary (smooth) holomorphic $(n,0)$-form.
It follows that $\phi\in \mathcal{L}^{n,0}_{\sigma}(U,L_{|Z|-Z})$.

So, we have seen that
\begin{eqnarray}\label{eq:inc1}
\mathcal{F}^{n,p}_{\gamma,E} &\subset& \mathcal{L}^{n,p}_\sigma(L_{|Z|-Z})\ ,\ p\geq 0,\\
\mathcal{L}^{n,p}_\gamma &\subset& \mathcal{L}^{n,p}_\sigma(L_{|Z|-Z})\ ,\ p\geq 1.\label{eq:inc2}
\end{eqnarray}
Let $p\geq 0$ and $\phi\in \mathcal{F}^{n,p}_{\gamma,E}(U)$ on an open set $U$.
Then $\phi\in \mathcal{L}^{n,p}_\sigma(U,L_{|Z|-Z})$ by \eqref{eq:inc1} 
and $\dq_{s,E}\phi\in \mathcal{L}^{n,p+1}_\sigma(U,L_{|Z|-Z})$ by \eqref{eq:inc2}.
It follows that $\dq_w\phi=\dq_{s,E}\phi$ in $\mathcal{L}^{n,p+1}_\sigma(U,L_{|Z|-Z})$ and thus $\phi\in \mathcal{C}^{n,p}_\sigma(L_{|Z|-Z})(U)$.
\end{proof}

\subsubsection{Properties of the divisor $D$ in $\mathcal{K}_X^s=\pi_*\big( \mathcal{K}_X\otimes\OO(-D)\big)$}

As the direct image functor is left-exact, it follows from \eqref{eq:Ks} (or Lemma \ref{lem:inclusion}, respectively)
that
$$\mathcal{K}_X^s = \ker \dq_s = \pi_* (\ker\dq_{s,E}) \subset \pi_* \big( \mathcal{K}_M \otimes\OO(|Z|-Z)\big).$$
But then
\begin{eqnarray*}
\mathcal{K}_M \otimes \OO(-D) = \pi^T \mathcal{K}_X^s \subset \pi^T \pi_* \big( \mathcal{K}_M \otimes\OO(|Z|-Z)\big) 
\subset \mathcal{K}_M \otimes\OO(|Z|-Z).
\end{eqnarray*}
Here, the first inclusion is valid as we consider the torsion-free analytic preimages
(the functor $\pi^T$ is exact on coherent analytic sheaves, see Section \ref{sec:appC}),
and the second inclusion comes from Lemma \ref{lem:ac1}. Thus: $D\geq Z-|Z|$.

\medskip
It remains to consider the situation when the exceptional set $E$ has only double self-intersections, i.e.
$d\leq 2$ in Section \ref{sssec:fix3}. This covers particularly the case of dimension $n=\dim X=2$.
But then
\begin{eqnarray*}
\mathcal{K}_X^s = \ker \dq_s = \pi_* (\ker\dq_{s,E}) = \pi_* \big( \mathcal{K}_M \otimes\OO(|Z|-Z)\big)
\end{eqnarray*}
by Lemma \ref{lem:fix1} and Lemma \ref{lem:inclusion}.
Here, Sections \ref{sssec:fix1} and \ref{sssec:fix2} are not necessary
and $\mathcal{K}_X^s$ is coherent by Grauert's direct image theorem.

\subsection{Proof of Theorem \ref{thm:B} and Theorem \ref{thm:Bd}}

Let $\pi: M \rightarrow X$ be a resolution of singularities as in Theorem \ref{thm:Ks}.
We see by Theorem \ref{thm:exactness2} that $(\mathcal{F}^{n,*},\dq_s)$ is a fine resolution for $\mathcal{K}_X^s$,
and by Theorem \ref{thm:Ks} that $\mathcal{K}_X^s=\pi_* \big(\mathcal{K}_M\otimes\OO(-D)\big)$
with an appropriate effective divisor $D\geq Z-|Z|$.

We can prove Theorem \ref{thm:B} now by use of the Leray spectral sequence.
For ease of notation, we write $\mathcal{S}:=\mathcal{K}_M\otimes\OO(-D)$.
Then the Leray spectral sequence for $\mathcal{S}$ and $\pi: M\rightarrow X$ is the spectral sequence
given by
\begin{eqnarray*}
E^{p,q}_2:= H^p(X,R^q\pi_* \mathcal{S})
\end{eqnarray*}
and it converges to $H^k(M,\mathcal{S}) = \oplus_{p+q=k} E^{p,q}_\infty$. But $X$ has only isolated singularities
so that the $R^q\pi_* \mathcal{S}$, $q>0$, are skyscraper sheaves with support in the singular set of $X$.
Thus $E^{p,q}_2=H^p(X,R^q\pi_*\mathcal{S})=0$ if $p>0$ and $q>0$.

For such a spectral sequence, there is for each $k\geq 1$ a natural short exact sequence
$0 \rightarrow E^{k,0}_2 \rightarrow H^k(K^*) \rightarrow E^{0,k}_2 \rightarrow 0$
where the injection and surjection are edge homomorphisms (see Appendix A, Lemma \ref{lem:appendix2}).
In our situation that means nothing else but exactness of
\begin{eqnarray*}
0 \rightarrow H^k(X,\pi_* \mathcal{S}) \rightarrow H^k(M,\mathcal{S}) \rightarrow \Gamma(X,R^k\pi_* \mathcal{S})\rightarrow 0
\end{eqnarray*}
As $(\mathcal{F}^{n,*},\dq_s)$ is a fine resolution for
$\pi_* \mathcal{S}=\mathcal{K}_X^s$ and $(\mathcal{C}^{n,*}_\sigma(L_{-D}),\dq_w)$ is a fine resolution for $\mathcal{S}$,
we get the exact sequence
\begin{eqnarray*}
0 \rightarrow 
H^k\big(\Gamma(X,\mathcal{F}^{n,*})\big) \overset{i}{\longrightarrow} H^k\big(\Gamma(M,\mathcal{C}^{n,*}_\sigma(L_{-D}))\big)
\rightarrow \Gamma(X,R^k\pi_* \mathcal{S}) \rightarrow 0,
\end{eqnarray*}
where the projection is given is follows: any $\dq_w$-closed form $\phi\in \Gamma(M,C^{n,k}_\sigma(L_{-D}))$
defines naturally a global section in $\Gamma\big(X,R^k\pi_* (\mathcal{K}_M\otimes\OO(-D))\big)$.
Moreover, note that
$H^k\big(\Gamma(M,\mathcal{C}^{n,*}_\sigma(L_{-D}))\big)=H^k(M,\mathcal{K}_M\otimes\OO(-D))$
and $H^k\big(\Gamma(X,\mathcal{F}^{n,*})\big)=H^{n,k}_{min}(X^*)$. 

\smallskip
When $D=Z-|Z|$, we can explain the injection $i$ explicitely.
Let $\sigma$ be a positive definite Hermitian metric on $M$ and
$L_{|Z|-Z}\rightarrow M$ a Hermitian holomorphic line bundle such that holomorphic sections of $L_{|Z|-Z}$
correspond to sections in $\OO(|Z|-Z)$.

By use of Lemma \ref{lem:inclusion} we obtain an inclusion of complexes
\begin{eqnarray*}
\pi^*: (\mathcal{F}^{n,*},\dq_s) \rightarrow (\pi_* (\mathcal{C}^{n,*}_\sigma(L_{|Z|-Z})),\pi_* \dq_w)
\end{eqnarray*}
which induces (by $L^2$-extension of the $\dq_w$-equation over the execptional set) a morphism
on the cohomology of the complexes, representing $i$:
\begin{eqnarray*}
i=[\pi^*]: H^{n,q}_{min}(X^*)=H^q\big(\Gamma(X^*,\mathcal{F}^{n,*})\big) \longrightarrow H^q\big(\Gamma(M,\mathcal{C}^{n,*}_\sigma(L_{|Z|-Z}))\big).
\end{eqnarray*}

That proves Theorem \ref{thm:B}. 
Theorem \ref{thm:Bd} follows by $L^2$-duality,
Theorem \ref{thm:duality}, on $X$ and classical Serre duality on $M$ analogously to Theorem \ref{thm:Ad}.

\subsection{Proof of Theorem \ref{thm:ps2}}\label{ssec:ps2}

Let us first point out the difficulty in the proof of Theorem \ref{thm:ps2} in \cite{PS1}.
The critical point is the trace estimate (3.7) in the proof of \cite{PS1}, Lemma 3.6.
It is said that the trace estimate follows from the fact that $u^{-(m_1-1)}v^{-(m_2-1)}\psi$
is in the Sobolev-$1$-space. But consider the following example:
With $r(u,v)=\|(u,v)\|$ in $\C^2$, the function $f(u,v)=r^a$ is Sobolev-$1$ in a neighborhood of the origin when $a>-1$.
One can check that the integral of $f^2$ over $|uv|=t$ behaves as $t^{3/2+a}$ as $t\rightarrow 0^+$ when $a<-1/2$.
Hence $-1<a<-1/2$ gives counterexamples to the claim that Sobolev-$1$ implies the trace estimate.

So, the missing step in the proof of Pardon-Stern of Theorem \ref{thm:ps2} is to show that the natural map
$H^{1}(M,\OO(Z-|Z|)) \rightarrow H^{0,1}_{max}(X^*)$ is surjective (see \cite{PS1}, (3.1), Proposition 3.3 and Lemma 3.6).
But this statement is covered by our Theorem \ref{thm:Bd}
and so the proof of Theorem \ref{thm:ps2} is now complete.

\bigskip


\section{Appendix A -- Spectral Sequence of a Double Complex}

For the basic definitions, statements and notation we refer to \cite{De3}, IV.§11. {\it Spectral Sequence of a Double Complex}.
Let $K^{*,*} = \bigoplus K^{p,q}$ be a double complex with differential $d=d'+d''$ such that
$d': K^{p,q}\rightarrow K^{p+1,q}$, $d'': K^{p,q}\rightarrow K^{p,q+1}$.
Let $(K^*,d)$ be the simple complex associated to $K^{*,*}$, $K^l = \bigoplus_{l=p+q} K^{p,q}$.
We consider the spectral sequence associated to this double complex,
$$E^{p,q}_0=K^{p,q}\ ,\ E^{p,q}_1=H^q_{d''}(K^{p,*})\ ,\ E^{p,q}_2=H^p_{d'}(H^q_{d''}(K^{*,*}))$$
with differentials $d_0=d''$ and $d_1=d'$.
By definition of the spectral sequence,
there are for $\nu\geq 1$ natural exact sequences
\begin{eqnarray}\label{eq:4seq}
0 \rightarrow E^{0,\nu}_{\nu+2} \rightarrow E^{0,\nu}_{\nu+1} \overset{d_{\nu+1}}{\longrightarrow} 
E^{\nu+1,0}_{\nu+1} \rightarrow E^{\nu+1,0}_{\nu+2} \rightarrow 0.
\end{eqnarray}
In this appendix, we consider the special situation that
\begin{eqnarray}\label{eq:appendix1}
E^{p,q}_2 = 0\ \mbox{ if } p>0 \mbox{ and } q>0,
\end{eqnarray}
i.e. only the groups $E^{p,0}_2$, $E^{0,q}_2$ are non-zero.
Under this assumption, we get for $\nu\geq 1$ by definition of the spectral sequence:
\begin{eqnarray}\label{eq:deg1}
E^{0,\nu}_2= ...=E^{0,\nu}_\nu= &E^{0,\nu}_{\nu+1} &\supset E^{0,\nu}_{\nu+2} = ... =E^{0,\nu}_\infty,\\
E^{\nu,0}_2= ...=E^{\nu,0}_{\nu} \supset & E^{\nu,0}_{\nu+1} & = E^{\nu,0}_{\nu+2} = ... = E^{\nu,0}_\infty.\label{eq:deg2}
\end{eqnarray}

\begin{lem}\label{lem:appendix1}
If \eqref{eq:appendix1} holds, then there is a natural long exact sequence
\begin{eqnarray*}
0 &\rightarrow& E^{1,0}_2 \rightarrow H^1(K^*) \rightarrow E^{0,1}_2 
\overset{d_2}{\longrightarrow} E^{2,0}_2 \rightarrow H^2(K^*) \rightarrow E^{0,2}_3 \overset{d_3}{\longrightarrow} E^{3,0}_3 \rightarrow ...\\
... &\rightarrow& E^{\mu,0}_{\mu} \rightarrow H^\mu(K^*) \rightarrow E^{0,\mu}_{\mu+1} 
\overset{d_{\mu+1}}{\longrightarrow} E^{\mu+1,0}_{\mu+1} \rightarrow H^{\mu+1}(K^*) \rightarrow E^{0,\mu+1}_{\mu+2} \overset{d_{\mu+2}}{\longrightarrow} ...\ ,
\end{eqnarray*}
where the non indicated arrows are edge homomorphisms.
\end{lem}

\begin{proof}
\eqref{eq:appendix1} implies that $E^{p,q}_r=0$ for all $r\geq 2$ if $p>0$ and $q>0$.
So, $E^{p,q}_\infty=0$ if $p>0$ and $q>0$.
As the spectral sequence converges to the cohomology of $K^*$, $E^{p,q}_r \Rightarrow H^{p+q}(K^*)$,
we obtain for $\mu\geq 2$ short exact sequences
\begin{eqnarray}\label{eq:appendix2}
0 \rightarrow E^{\mu,0}_\infty \rightarrow H^\mu(K^*) \rightarrow E^{0,\mu}_\infty \rightarrow 0.
\end{eqnarray}
Merging these with the exact sequences \eqref{eq:4seq} by use of \eqref{eq:deg1} (i.e. $E^{0,\mu}_\infty=E^{0,\mu}_{\mu+2}$)
and \eqref{eq:deg2} (i.e. $E^{\mu,0}_\infty=E^{0,\mu}_{\mu+1}$) gives the result.
\end{proof}

\begin{lem}\label{lem:appendix2}
Assume that \eqref{eq:appendix1} holds and that the natural maps $H^l(K^*) \rightarrow E^{0,l}_2$ are surjective
for all $l\geq 1$. Then there is for all $k\geq 1$ a short exact sequence
\begin{eqnarray}\label{eq:appendix3}
0 \rightarrow E^{k,0}_2 \rightarrow H^k(K^*) \rightarrow E^{0,k}_2 \rightarrow 0
\end{eqnarray}
where the injection and surjection are edge homomorphisms.
\end{lem}

\begin{proof}
We use the sequence from Lemma \ref{lem:appendix1}. By \eqref{eq:deg1}, $E^{0,\mu}_{\mu+1}=E^{0,\mu}_2$,
so that the surjectivity assumption implies that the maps $d_{\mu+1}: E^{0,\mu}_{\mu+1}\rightarrow E^{\mu+1,0}_{\mu+1}$ vanish
by exactness of the long sequence from Lemma \ref{lem:appendix1}.
But this implies by use of \eqref{eq:deg1},\eqref{eq:deg2} and the definition of the
spectral sequence that $E^{0,\mu}_2 = E^{0,\mu}_\infty$, $E^{\mu,0}_2=E^{\mu,0}_\infty$.
The result follows by splitting the long exact sequence from Lemma \ref{lem:appendix1}.
\end{proof}

\smallskip


\section{Appendix B -- Computation of the integral in Lemma \ref{lem:fix1}}\label{sec:appB}

For a compact subset $K\subset \C^2$, we shall compute 
\begin{eqnarray*}
I_k := \int_K \frac{\chi_k(|z_1^{k_1} z_2^{k_2}|)}{|z_1 z_2|^2 |\log|^2 |z_1 z_2|} dV_{\C^2}(z_1,z_2)
\end{eqnarray*}
from the proof of Lemma \ref{lem:fix1}.
As the critical point is just the origin, we can assume that $K=\{z: |z_1|,|z_2| <e^{-1}\}$.
In polar coordinates, we have
\begin{eqnarray*}
I_k \sim \int_{\substack{r_1 < e^{-1},\\ r_2<e^{-1}}} \frac{\chi_k(r_1^{k_1} r_2^{k_2}) dr_1 dr_2}{ r_1 r_2 (- \log(r_1 r_2))^2}
= \int_{\substack{t_1>1,\\ t_2>1}} \frac{\chi_k(e^{-t_1k_1-t_2k_2}) dt_1 dt_2}{(t_1+t_2)^2},
\end{eqnarray*}
where the second step is the substitution $t_1=-\log(r_1)$, $t_2=-\log(r_2)$.
Another substitution $u=t_1+t_2$ yields
\begin{eqnarray*}
I_k \sim \int_\Delta \frac{\chi_k(e^{-t_1k_1-t_2k_2}) dt_1 du}{u^2},
\end{eqnarray*}
where the domain of integration is now $\Delta=\{(t_1,u)\in\R^2: u\geq 2 \mbox{ and } 1\leq t_1 \leq u-1\}$.
Recall that $\chi_k$ is the characteristic function of $[e^{-e^{k+1}},e^{-e^k}]$.
As $u=t_1+t_2$, there exist constants $c_0,c_1>0$ such that $u/c_1 \leq k_1t_1 + k_2 t_2 \leq u/c_0$.
So, if $u\notin [c_0e^k,c_1e^{k+1}]$, then $t_1k_1+t_2k_2\notin [e^k,e^{k+1}]$. Thus, we obtain further:
\begin{eqnarray*}
I_k &\lesssim& \int_{\substack{c_0 e^k \leq u\leq c_1 e^{k+1}\\ 1\leq t_1\leq u-1}} \frac{dt_1 du}{u^2} = \int_{c_0 e^k}^{c_1 e^{k+1}} \frac{(u-2)du}{u^2}
< \log\left(\frac{c_1 e^{k+1}}{c_0 e^k}\right) = \log\big(\frac{c_1}{c_0}\big)+1.
\end{eqnarray*}
So, the integral is in fact uniformly bounded, not depending on $k$.


\section{Appendix C -- Modifications of canonical sheaves}\label{sec:appC}

In order to show that $\mathcal{K}_X^s=\pi_*\pi^T \mathcal{K}_X^s$, we need a few lemmata:

\begin{lem}\label{lem:ac1}
Let $\pi: Y \rightarrow X$ be a modification between locally irreducible complex spaces $Y$, $X$.
If $\mathcal{F}$ is a torsion-free coherent analytic sheaf on $X$,
then the canonical morphism 
\begin{eqnarray}\label{eq:ac1}
\mathcal{F} \rightarrow \pi_* \pi^T \mathcal{F}
\end{eqnarray}
is injective, where $\pi^T \mathcal{F}$ denotes the torsion-free analytic preimage.

If $\mathcal{G}$ is a coherent analytic sheaf on $Y$,
then the canonical morphism
\begin{eqnarray}\label{eq:ac2}
\pi^T \pi_* \mathcal{G} \rightarrow \mathcal{G}
\end{eqnarray}
is injective (here, $X$ need not be locally irreducible).
\end{lem}

\begin{proof}
$\pi$ is biholomorphic outside the exceptional set of the modification.
So, a germ in the kernel of \eqref{eq:ac1} must have support contained in the exceptional set.
Thus, it is itself already the zero-germ because $\mathcal{F}$ is torsion-free.

Analogously, \eqref{eq:ac2} is injective because $\pi^T \pi_* \mathcal{G}$ is torsion-free.
\end{proof}

By a similar argument, it is easy to see that the functor $\pi^T$ is exact if $\pi: Y\rightarrow X$ is a proper modification
and $Y$ is locally irreducible (we will use that fact below).

\begin{lem}\label{lem:ac2}
Let $\pi: Y\rightarrow X$ be a modification between reduced complex spaces $Y$, $X$ of pure dimension
such that $Y$ is locally irreducible,
and $\mathcal{K}_X$ the Grauert-Riemenschneider canonical sheaf on $X$.
Then the canonical morphism
\begin{eqnarray}\label{eq:ac3}
\mathcal{K}_X \rightarrow \pi_* \pi^T \mathcal{K}_X
\end{eqnarray}
is an isomorphism.
\end{lem}

\begin{proof}
First, \eqref{eq:ac3} is injective by the first statement of Lemma \ref{lem:ac1}
(by definition of $\mathcal{K}_X$, we do not require $X$ to be locally irreducible).
Let $\mathcal{K}_Y$ be the Grauert-Riemenschneider canonical sheaf of $Y$. 
Then the natural morphism $\pi^T \pi_* \mathcal{K}_Y \rightarrow \mathcal{K}_Y$ is injective
by the second statement of Lemma \ref{lem:ac1}.
But the push-forward $\pi_*$ is left-exact 
and $\pi_* \mathcal{K}_Y = \mathcal{K}_X$. So, it follows that there is also a natural injection
$\pi_* \pi^T \mathcal{K}_X \rightarrow \mathcal{K}_X$ (which is inverse to $\mathcal{K}_X \rightarrow \pi_* \pi^T \mathcal{K}_X$).
\end{proof}

\begin{thm}\label{thm:ac3}
Let $X$ be a Hermitian complex space of pure dimension $n$
and $\mathcal{K}_X^s$ the canonical sheaf of holomorphic $n$-forms with Dirichlet boundary condition defined above.
Let $\pi: M\rightarrow X$ a resolution of singularities. Then the canonical morphism
\begin{eqnarray}\label{eq:ac4}
\mathcal{K}_X^s \rightarrow \pi_* \pi^T \mathcal{K}_X^s
\end{eqnarray}
is an isomorphism.
\end{thm}

\begin{proof}
First, \eqref{eq:ac4} is injective by the first statement of Lemma \ref{lem:ac1} (and by the definition of $\mathcal{K}_X^s$,
$X$ need not be locally irreducible).
Second, consider the sheaf $\mathcal{G} := \mathcal{F}^{n,0}_{\gamma,E} \cap \ker \dq_{s,E}$ which is a subsheaf of $\mathcal{K}_M$
such that $\pi_* \mathcal{G} = \mathcal{K}_X^s$ (see Section \ref{sssec:fix3}).
The natural injection $\mathcal{G} \rightarrow \mathcal{K}_M$ induces the commutative diagram
\begin{eqnarray*}
\begin{xy}
  \xymatrix{
     \pi^T\pi_* \mathcal{G} \ar[r]^{\phi_1} \ar[d]^{\phi_3}    &  \pi^T \pi_* \mathcal{K}_M \ar[d]^{\phi_2} \\
      \mathcal{G} \ar[r] &  \mathcal{K}_M }
\end{xy}
\end{eqnarray*}
where $\phi_1$ is injective because $\pi_*$ and $\pi^T$ are left-exact,
and $\phi_2$ is injective by Lemma \ref{lem:ac1}.
Thus, $\phi_3$ is also injective. Note that we could not use Lemma \ref{lem:ac1} directly because $\mathcal{G}$
is not necessarily coherent. But injectivity of $\phi_3$ and $\mathcal{K}_X^s=\pi_* \mathcal{G}$
yield an injective morphism $\pi_* \pi^T \mathcal{K}_X^s \rightarrow \mathcal{K}_X^s$ 
(which is inverse to $\mathcal{K}_X^s \rightarrow \pi_* \pi^T \mathcal{K}_X^s$).
\end{proof}

For the sake of completeness, let us also include:

\begin{lem}\label{lem:ac4}
Let $X$ be a complex space and $i: \mathcal{F} \hookrightarrow \mathcal{G}$ an injective morphism 
between two coherent locally free sheaves of rank one over $X$.
Then there exists an effective Cartier divisor, $D\geq 0$, such that $i( \mathcal{F}) = \mathcal{G} \otimes \OO_X(-D)$.
In particular, $i$ is an isomorphism precisely on $X-|D|$.
\end{lem}

\begin{proof}
Let $\{X_\alpha\}_\alpha$ be a locally finite open cover of $X$ such that both, $\mathcal{F}$ and $\mathcal{G}$, are free over each $X_\alpha$.
So, there are trivializations
$$\phi_\alpha:  \mathcal{F}|_{X_\alpha} \overset{\sim}{\longrightarrow} \OO_{X_\alpha}\ ,\ \ 
\psi_\alpha: \mathcal{G}|_{X_\alpha} \overset{\sim}{\longrightarrow} \OO_{X_\alpha}\ ,$$
and for $X_{\alpha\beta}:=X_\alpha\cap X_\beta\neq \emptyset$, 
we have transition functions $F_{\beta\alpha}:= \phi_\beta\circ \phi_\alpha^{-1} \in \OO^*(X_{\alpha\beta})$
and $G_{\beta\alpha}:= \psi_\beta\circ \psi_\alpha^{-1} \in \OO^*(X_{\alpha\beta})$ satisfying the cocycle conditions.
In trivializations 
\begin{eqnarray*}
\psi_\alpha \circ i|_{X_\alpha} \circ \phi_\alpha^{-1}: \OO_{X_\alpha} \rightarrow \OO_{X_\alpha}
\end{eqnarray*}
is given by a holomorphic function $i_\alpha\in \OO(X_\alpha)$, vanishing nowhere identically,
with (unreduced) divisor $(i_\alpha)$. It is easy to see that
$G_{\beta\alpha} \cdot i_\alpha = i_\beta \cdot F_{\beta\alpha}$
on $X_{\alpha\beta}$, so that $i_\alpha / i_\beta = F_{\beta\alpha} / G_{\beta\alpha} \in \OO^*(X_{\alpha\beta})$.
Thus $D:=\{(X_\alpha,i_\alpha)\}_\alpha$ defines in fact an effective Cartier divisor with support $E$.

To see that $i(\mathcal{F})=\mathcal{G}\otimes\OO_X(-D)$, note that
$\mathcal{G}\otimes \OO_X(-D)$ is a coherent subsheaf of $\mathcal{G}$ because $\OO_X(-D)$ is a sheaf of ideals in $\OO_X$,
and that
$$\psi_\alpha\otimes 1: \left.\mathcal{G}\otimes \OO_X(-D)\right|_{X_\alpha} \overset{\sim}{\longrightarrow} \OO_{X_\alpha} \otimes \OO_{X_\alpha}(-(i_\alpha)).$$
\end{proof}



\begin{thebibliography}{99999}














\bibitem[AV]{AnVe} {\sc A.\ Andreotti, E.\ Vesentini},
Carleman estimates for the Laplace Beltrami equation on complex manifolds,
{\em Publ. Math. Inst. Hautes Etudes Sci.} {\bf 25} (1965), 81--130.

\bibitem[AHL]{AHL} {\sc J.\ M.\ Aroca, H.\ Hironaka, J.\ L.\ Vicente},
Desingularization theorems, {\em Mem. Math. Inst. Jorge Juan}, {\bf No. 30}, Madrid, 1977.

\bibitem[BM]{BiMi} {\sc E.\ Bierstone, P.\ Milman}, Canonical desingularization in characteristic zero by blowing-up
the maximum strata of a local invariant, {\em Inventiones Math.} {\bf 128} (1997), {\em no. 2}, 207--302.

\bibitem[BS]{BS} {\sc B.\ Berndtsson, N.\ Sibony}, The $\dq$-equation on a positive current,
{\em Inventiones Math.} {\bf 147} (2002), 371--428.


\bibitem[CGM]{CGM}{\sc J.\ Cheeger, M.\ Goresky, R.\ MacPherson},
{\em $L^2$-cohomology and intersection homology of singular algebraic varieties},
Ann. of Math. Stud., No. 102, Princeton University Press, Princeton, 1982, 303--340.

\bibitem[D1]{De}{\sc J.-P.\ Demailly},
Estimations $L^2$ pour l'op\'erateur $\dq$ d'un fibr\'e vectoriel holomorphe semi-positif au-dessus d'une vari\'et\'e k\"ahl\'erienne compl\`ete,
{\em Ann. Sci. \'Ecole Norm. Sup.} {\bf 15} (1982), 457--511.

\bibitem[D2]{De2}{\sc J.-P.\ Demailly}, $L^2$ Hodge Theory and Vanishing Theorems,
in {\em Introduction to Hodge Theory}, SMF/AMS Texts and Monographs, vol. {\bf 8}, AMS (2002).

\bibitem[D3]{De3}{\sc J.-P.\ Demailly}, {\it Complex analytic and differential geometry}, Institut Fourier, Universit\'e de Grenoble I.


\bibitem[DF]{DF}{\sc H.\ Donelly, C.\ Fefferman},
$L^2$-cohomology and index theorem for the Bergman metric, {\em Ann. Math.} {\bf 118} (1983), 593--618.


\bibitem[FK]{FoKo}{\sc G.\ B.\ Folland, J.\ J.\ Kohn}, {\em The Neumann problem for the Cauchy-Riemann complex},
Ann. of Math. Stud., No. 75, Princeton University Press, Princeton, 1972.




\bibitem[FOV]{FOV2} {\sc J.\ E.\ Forn{\ae}ss, N.\ {\O}vrelid, S.\ Vassiliadou},
Local $L^2$ results for $\dq$: the isolated singularities case,
{\em Internat. J. Math.} {\bf 16} (2005), {\em no. 4}, 387--418.


\bibitem[GM1]{GoMa1}{\sc M.\ Goresky, R.\ MacPherson}, Intersection homology theory,
{\em Topology} {\bf 19} (1980), no. 2, 135--162. 

\bibitem[GM2]{GoMa2}{\sc M.\ Goresky, R.\ MacPherson}, Intersection homology. II,
{\em Invent. Math.} {\bf 72} (1983), no. 1, 77--129.




\bibitem[GPR]{SCV} {\sc H.\ Grauert, T.\ Peternell, R.\ Remmert (Eds.)},
{\em Several Complex Variables VII}, Encyclopaedia of Mathematical Sciences {\bf 74},
Springer-Verlag.

\bibitem[GR]{GrRie} {\sc H.\ Grauert, O.\ Riemenschneider},
Verschwindungss\"atze f\"ur analytische Kohomologiegruppen auf komplexen R\"aumen,
{\em Invent. Math.} {\bf 11} (1970), 263--292.



\bibitem[H1]{Ha} {\sc H.\ Hauser}, The Hironaka theorem on resolution of singularities,
{\em Bull. (New Series) Amer. Math. Soc.} {\bf 40}, no. 3, (2003), 323--403.

\bibitem[H2]{Hi} {\sc H.\ Hironaka}, Resolution of singularities of an algebraic variety over a field of characteristic zero: I, II. 
{\em Ann. of Math. (2)} {\bf 79} (1964), 109--326.


\bibitem[HL]{HeLe} {\sc G.\ M.\ Henkin, J.\ Leiterer}, {\em Theory of Functions on Complex Manifolds},
Birkh\"auser, Basel, 1984.







\bibitem[H3]{Hoe1}{\sc L.\ H\"ormander}, $L^2$-estimates and existence theorems for the $\dq$-operator,
{\em Acta Math.} {\bf 113} (1965), 89--152.

\bibitem[H4]{Hoe2}{\sc L.\ H\"ormander}, {\em An introduction to complex analysis in several variables},
Van Nostrand, Princeton, 1966.

\bibitem[HP]{HP}{\sc W.-C.\ Hsiang, V.\ Pati}, $L^2$-cohomology of normal algebraic surfaces,
{\em Invent. Math.} {\bf 81} (1985), 395--412.

\bibitem[KK]{KK}{\sc M.\ Kashiwara, T.\ Kawai}, The Poincar\'e lemma for varieties of polarized Hodge structure,
{\em Publ. Res. Inst. Math. Sci.} {\bf 23} (1987), 345--407.

\bibitem[K1]{Ko1}{\sc J.\ J.\ Kohn}, Harmonic integrals on strongly pseudoconvex manifolds I,
{\em Ann. of Math.} {\bf 78} (1963), 112--148.

\bibitem[K2]{Ko2}{\sc J.\ J.\ Kohn}, Harmonic integrals on strongly pseudoconvex manifolds II,
{\em Ann. of Math.} {\bf 79} (1964), 450--472.

\bibitem[KN]{KoNi}{\sc J.\ J.\ Kohn, L.\ Nirenberg},
Non-coercive boundary value problems, {\em Comm. Pure Appl. Math.} {\bf 18} (1965), 443--492.


\bibitem[K3]{Kol}{\sc J.\ Koll\'ar}, Singularities of pairs. {\em Algebraic geometry -- Santa Cruz 1995}, 221--287,
Proc. Sympos. Pure Math. {\bf 62}, Part 1, Amer. Math. Soc., Providence, RI, 1997.



\bibitem[LM]{LM} {\sc I.\ Lieb, J.\ Michel}, 
{\em The Cauchy-Riemann Complex, Integral Formulae and Neumann Problem}, 
Vieweg, Braunschweig/Wiesbaden, 2002.


\bibitem[M]{M}{\sc R.\ MacPherson}, Global questions in the topology of singular spaces,
{\em Proceedings of the International Congress of Mathematicians}, Vol. 1, Warsaw 1984, 213--235.





\bibitem[O]{Oh2}{\sc T.\ Ohsawa}, Hodge spectral sequence on compact K\"ahler spaces,
{\em Publ. Res. Inst. Math. Sci.} {\bf 23} (1987), 265--274.








\bibitem[OV]{OvVa4}{\sc N.\ {\O}vrelid, S.\ Vassiliadou}, $L^2$-$\dq$-cohomology groups of some singular complex spaces,
{\em Invent. Math.} {\bf 192} (2013), no. 2, 413--458.




\bibitem[PS1]{PS1}{\sc W.\ Pardon, M.\ Stern},
$L^2$-$\dq$-cohomology of complex projective varieties, {\em J. Amer. Math. Soc.} {\bf 4} (1991), no. 3, 603--621.

\bibitem[PS2]{PS2}{\sc W.\ Pardon, M.\ Stern},
Pure Hodge structure on the $L^2$-cohomology of varieties with isolated singularities,
{\em J. reine angew. Math.} {\bf 533} (2001), 55--80.







\bibitem[RR]{RR}{\sc J.-P.\ Ramis, G.\ Ruget}, Complexe dualisant et th\'eor\`eme de dualit\'e en g\'eom\'etrie analytique complexe,
{\em Inst. Hautes \'Etudes Sci. Publ. Math.} {\bf 38} (1970), 77--91.


\bibitem[R1]{Rie}{\sc O.\ Riemenschneider}, 
Characterizing Moishezon Spaces by Almost Positive Coherent Analytic Sheaves,
{\em Math. Z.} {\bf 123} (1971), 263--284.


\bibitem[R2]{Ro}{\sc H.\ Rossi},
Picard variety of an isolated singular point, 
{\em Rice Univ. Studies} {\bf 54} (1968), no.~4, 63--73.



\bibitem[R3]{Rp1}{\sc J.\ Ruppenthal}, About the $\dq$-equation at isolated singularities with regular exceptional set,
{\em Internat. J. Math.} {\bf 20} (2009), no. 4, 459--489.













\bibitem[T]{Ta} {\sc K.\ Takegoshi}, Relative vanishing theorems in analytic spaces,
{\em Duke Math. J.} {\bf 51} (1985), no. 1, 273--279.

\bibitem[W]{W}{\sc R.\ O.\ Wells}, {\em Differential Analysis on Complex Manifolds},
Springer, New-York, 1980.




\end{thebibliography}
\end{document}